\NeedsTeXFormat{LaTeX2e}

\documentclass[a4paper, notitlepage]{article}

\usepackage[latin1]{inputenc}
\usepackage[british,english]{babel}
\usepackage{calc,amssymb,mathtools,amsthm,stmaryrd,units,enumitem,tabularx,booktabs,multicol}
\usepackage[strict]{chngpage}
\mathtoolsset{mathic}

\usepackage{hyperref}
\hypersetup{
breaklinks=true,
colorlinks=true,
linkcolor=black,
anchorcolor=black,
citecolor=black,
filecolor=black,
menucolor=black,
pagecolor=black,
urlcolor=black
}
\usepackage[alphabetic,non-sorted-cites]{amsrefs}

\usepackage[all]{xy}
\newcommand{\ie}{i.\,e.\ }
\newcommand{\eg}{e.\,g.\ }

\chardef\bslash=`\\

\hfuzz1pc
\theoremstyle{plain}
   \newtheorem{thm}{Theorem}[section]
   \newtheorem{prop}[thm]{Proposition}
   \newtheorem{cor}[thm]{Corollary}
   \newtheorem*{cor*}{Corollary}
   \newtheorem{lem}[thm]{Lemma}
   
   \newtheorem{myAxioms}[thm]{Standing assumptions}
\theoremstyle{definition}
   \newtheorem{defn}[thm]{Definition}
   \newtheorem*{defn*}{Definition}
   \newtheorem*{lem*}{Lemma}
\theoremstyle{remark}
   
   \newtheorem{rem}{Remark}[section]
   \newtheorem*{rem*}{Remark}
   
\newcommand{\Z}{\mathbb{Z}}
\newcommand{\R}{\mathbb{R}}
\newcommand{\C}{\mathbb{C}}
\newcommand{\Q}{\mathbb{Q}}
\newcommand{\HH}{\mathbb{H}}

\DeclareMathOperator{\rank}{rank}
\DeclareMathOperator{\GL}{GL}

\DeclareMathOperator{\coker}{coker}
\DeclareMathOperator{\colim}{colim}

\newcommand{\smsh}{\wedge}
\newcommand{\N}{\mathcal N}

\DeclareMathOperator{\Thom}{Thom}
\DeclareMathOperator{\point}{point}
\DeclareMathOperator{\BOspace}{BO}
        \newcommand{\BO}{\ensuremath{\BOspace}}
\DeclareMathOperator{\BUspace}{BU}
        \newcommand{\BU}{\ensuremath{\BUspace}}
\DeclareMathOperator{\Uspace}{U}
        \newcommand{\U}{\ensuremath{\Uspace}}

\DeclareMathOperator{\Pic}{Pic}

\newcommand{\A}{\mathbb{A}}
\newcommand{\G}{\mathbb{G}}
\renewcommand{\P}{\mathbb{P}}
        \newcommand{\CP}{{\mathbb{C}\mathbb{P}}}
\newcommand{\OO}{\mathcal{O}}
\newcommand{\HOM}{\mathcal{H}{om}}
\renewcommand{\tilde}{\widetilde}

\newcommand{\rH}{\widetilde{H}}
\newcommand{\rE}{\widetilde{E}}
\DeclareMathOperator{\Kgroup}{K}
        \newcommand{\K}{\ensuremath{\Kgroup}}
\DeclareMathOperator{\KOgroup}{KO}

        \newcommand{\KO}{\ensuremath{\KOgroup}}
        \newcommand{\rK}{\ensuremath{\widetilde{\K}{}} }
        \newcommand{\Kh}{\ensuremath{\KOgroup}}
        \newcommand{\rKh}{\ensuremath{\widetilde{\Kh}{}}}
        \newcommand{\rKO}{\ensuremath{\widetilde{\KO}{}} }

\newcommand{\quotient}[2]{\raisebox{2pt}{\( #1 \)}\!\Big/\raisebox{-2pt}{\( #2 \)}}

    \DeclareMathOperator{\Wgroup}{W}
        \newcommand{\W}{\Wgroup}
        
        \DeclareMathOperator{\w}{w}
        \DeclareMathOperator{\GWgroup}{GW}
        \newcommand{\GW}{\GWgroup}
        
        \DeclareMathOperator{\gw}{gw}
\newcommand{\lb}[1]{{\mathcal{#1}}}
\newcommand{\vb}[1]{{\mathcal{#1}}}
\newcommand{\dual}{\vee}

\newcommand{\SgmInf}{\Sigma^{\infty}}

\newcommand{\SH}{\mathcal{SH}}
\newcommand{\pH}{\mathcal{H}_{\bullet}}
\newcommand{\uH}{\mathcal{H}}
\newcommand{\SPaK}{{\mathbb{K}}}
\newcommand{\SPtK}{\mathbb{K}^{\text{top}}}
\newcommand{\SPaKO}{\mathbb{K}\mathbf{O}}
\newcommand{\SPtKO}{\mathbb{K}\mathbf{O}^{\text{top}}}
\newcommand{\SPGW}{\mathbb{G}\mathrm{W}}
\newcommand{\mm}[1]{\left(\begin{smallmatrix}#1\end{smallmatrix}\right)}

\DeclareMathOperator{\Gr}{Gr}
 \newcommand{\Grnd}{\Gr^{\text{nd}}}
\DeclareMathOperator{\SO}{SO}
\DeclareMathOperator{\SU}{SU}
\DeclareMathOperator{\SL}{SL}
\DeclareMathOperator{\Sp}{Sp}

\DeclareMathOperator{\Sq}{Sq}
\DeclareMathOperator{\Spin}{Spin}
\newcommand*{\longhookrightarrow}{\ensuremath{\lhook\joinrel\relbar\joinrel\relbar\joinrel\rightarrow}}
\newcommand*{\longhookleftarrow}{\ensuremath{\leftarrow\joinrel\relbar\joinrel\relbar\joinrel\rhook}}

\newcolumntype{M}{>{$}c<{$}}
\newcommand{\minrowheight}[1]{\rule[\heightof{#1}-\totalheightof{#1}]{0pt}{\totalheightof{#1}+3pt}}
\newcommand{\dummyfrac}{\ensuremath{\frac{\big(N\big)}{N}}}

\entrymodifiers={+!!<0pt,\fontdimen22\textfont2>}
\SelectTips{cm}{10}
\hyphenation{Schlich-ting}

\hyphenpenalty1000
\begin{document}
\title{Witt groups of complex cellular varieties}
\author{Marcus Zibrowius%
 \thanks{Department of Pure Mathematics and Mathematical Statistics,
  University of Cambridge,
  United Kingdom}
}
\date{27 June 2011\footnote{This is the final version, identical in content to the paper published in \href{http://www.math.uiuc.edu/documenta/vol-16/vol-16-eng.html}{\textit{Documenta Math.}~16 (2011)} on pages 465--511.}}
\maketitle
\begin{abstract}
\noindent We show that the Grothendieck-Witt and Witt groups of smooth complex cellular varieties can be identified with their topological KO-groups. As an application, we deduce the values of the Witt groups of all irreducible hermitian symmetric spaces, including smooth complex quadrics, spinor varieties and symplectic Grassmannians.
\end{abstract}
\tableofcontents
\thispagestyle{empty}
\setlength{\parindent}{0pt}
\addtolength{\parskip}{3pt}
\addtolength{\topsep}{3pt}

\section*{Introduction}\label{sec:intro}
\addcontentsline{toc}{section}{Introduction}
\thispagestyle{empty}
The purpose of this paper is to demonstrate that the Grothendieck-Witt and Witt groups of complex projective homogeneous varieties can be computed in a purely topological way. That is, we show in Theorem~\ref{thm:mainthm} how to identify them with the topological KO-groups of these varieties, and we illustrate this with a series of known and new examples.

Our theorem holds more generally for any smooth complex cellular variety. By this we mean a smooth complex variety \( X \) with a filtration by closed subvarieties \( {\emptyset=Z_0\subset Z_1\subset Z_2 \dots\subset Z_N=X} \) such that the complement of \( Z_k \) in \( Z_{k+1} \) is an open ``cell'' isomorphic to \( \A^{n_k} \) for some \( n_k \). Let us put our result into perspective. It is well-known that for such cellular \( X \) we have an isomorphism
\begin{align*}
 \K_0(X)\overset{\cong}\longrightarrow\K^0(X(\C))
\end{align*}
between the algebraic K-group of \( X \) and the complex K-group of the underlying topological space \( X(\C) \). In fact, both sides are easy to compute: they decompose as direct sums of the K-groups of the cells, each of which is isomorphic to \( \Z \). Such decompositions are characteristic of oriented cohomology theories.
Witt groups, however, are strictly non-oriented, and this makes computations much harder. It is true that the Witt groups of complex varieties decompose into copies of \( \Z/2 \), the Witt group of \( \C \), but even in the cellular case there is no general understanding of how many copies to expect.

Nonetheless, we can prove our theorem by an induction over the number of cells of \( X \). The main issue is to define the map from Witt groups to the relevant KO-groups in such a way that it respects various exact sequences. The basic idea is clear: the Witt group \( \W^0(X) \) classifies vector bundles equipped with non-degenerate symmetric forms, and in topology symmetric complex vector bundles are in one-to-one correspondence with real vector bundles, classified by \( \KO^0(X) \). More precisely, we have two natural maps:
\begin{align*}
 \GW^0(X) & \rightarrow \KO^{0}(X(\C))\\
 \W^0(X)  & \rightarrow \tfrac{\KO^{0}(X(\C))}{\K^0(X(\C))}
\end{align*}
Here, \( \GW^0(X) \) is the Grothendieck-Witt group of \( X \), and in the second line \( \K^0(X) \) is mapped to \( \KO^0(X) \) by sending a complex vector bundle to the underlying real bundle. It is possible to extend these maps to shifted groups and groups with support in a concrete and ``elementary'' way, as was done in \cite{Me:RSK-Essay}. The method advocated here is to rely instead on a result in \( \A^1 \)-homotopy theory: the representability of hermitian K-theory by a spectrum whose complex realization is the usual topological KO-spectrum. Currently, our only reference is a draft paper of Morel \cite{Morel}, but the result is well-known to the experts and a full published account will undoubtedly become available in due course. In the unstable homotopy category at least, the statement is immediate from Schlichting and Tripathi's recent description of a geometric representing space for hermitian K-theory (see Section~\ref{sec:Representing_algK}).

The structure of the paper is as follows: In the first section we assemble the basic definitions, reviewing some representability results along the way before finally stating in \ref{myAxioms} the results in \( \A^1 \)-homotopy theory that we  ultimately take as our starting point. Our main result, Theorem~\ref{thm:mainthm}, is stated and proved in the second section. Section~\ref{sec:AHSS} reviews mostly well-known facts about the Atiyah-Hirzebruch spectral sequence, on which the computations of examples in the final section rely.

\section{Preliminaries}\label{sec:set-up}
\subsection{Witt groups and hermitian K-theory}\label{subsec:W}
From a modern point of view, the theory of Witt groups represents a K-theoretic approach to the study of quadratic forms. We briefly run through some of the basic definitions.

Recall that the algebraic K-group \( \K_0(X) \) of a scheme \( X \) can be defined as the free abelian group on isomorphism classes of vector bundles over \( X \) modulo the following relation: for any short exact sequence of vector bundles
\begin{equation*}
 0\rightarrow\vb{E}\rightarrow\vb{F}\rightarrow\vb{G}\rightarrow 0
\end{equation*}
over \( X \) we have \( [\vb{F}]=[\vb{E}]+[\vb{G}] \) in \( \K_0(X) \). In particular, as far as \( \K_0(X) \) is concerned, we may pretend that all exact sequences of vector bundles over \( X \) split.

Now let \( (\vb{E},\epsilon) \) be a symmetric vector bundle, by which we mean a vector bundle \( \vb{E} \) equipped with a non-degenerate symmetric bilinear form \( \epsilon \). We may view \( \epsilon \) as an isomorphism from \( \vb{E} \) to its dual bundle \( \vb{E}^\dual \), in which case its symmetry may be expressed by saying that \( \epsilon \) and \( \epsilon^\dual \) agree under the canonical identification of the double-dual \( (\vb{E}^\dual)^\dual \) with \( \vb{E} \).
Two symmetric vector bundles \( (\vb{E},\epsilon) \) and \( (\vb{F},\phi) \) are isometric if there is an isomorphism of vector bundles \( i\colon{\vb{E}\rightarrow\vb{F}} \) compatible with the symmetries, \ie such that \( i^\dual\phi i= \epsilon \). The orthogonal sum of two symmetric bundles has the obvious definition \( {(\vb{E},\epsilon) \perp (\vb{F},\phi)}:={(\vb{E}\oplus\vb{F},\epsilon\oplus\phi)} \).

Any vector bundle \( \vb{E} \) gives rise to a symmetric bundle \( H(\vb{E}):=(\vb{E}\oplus\vb{E}^\dual,\mm{0&1\\1&0}) \), the hyperbolic bundle associated with \( \vb{E} \). These hyperbolic bundles are the simplest members of a wider class of so-called metabolic bundles: symmetric bundles \( (\vb{M},\mu) \) which contain a subbundle \( j\colon{\vb{L}\rightarrow\vb{M}} \) of half their own rank on which \( \mu \) vanishes. In other words, \( (\vb{M},\mu) \) is metabolic if it fits into a short exact sequence of the form
\begin{equation*}
 0\rightarrow \vb{L} \overset{j}\longrightarrow \vb{M} \overset{j^\dual\mu}\longrightarrow \vb{L^\dual} \rightarrow 0
\end{equation*}
The subbundle \( \vb{L} \) is then called a Lagrangian of \( \vb{M} \). If the sequence splits, \( (\vb{M},\mu) \) is isometric to \( H(\lb{L}) \), at least in any characteristic other than two. This motivates the definition of the Grothendieck-Witt group.

\begin{defn}[\cites{Walter:TGW,Schlichting:Kh-exact}]
 The Grothendieck-Witt group \( \GW^0(X) \) of a scheme \( X \) is the free abelian group on isometry classes of symmetric vector bundles over \( X \) modulo the following two relations:
\begin{itemize}%
[topsep=0pt]
  \item \( [(\vb{E},\epsilon)\perp(\vb{G},\gamma)] = [(\vb{E},\epsilon)] + [(\vb{G},\gamma)] \)
  \item \( [(M,\mu)]=[H(\vb{L})] \) for any metabolic bundle \( (M,\mu) \) with Lagrangian \( \vb{L} \)
 \end{itemize}
 The Witt group \( \W^0(X) \) is defined similarly, except that the second relation reads \( [(M,\mu)]=0 \).
 Equivalently, we may define \( \W^0(X) \) by the exact sequence
 \begin{equation*}\label{seq:Karoubi_def}
  \K_0(X)\overset{H}\longrightarrow\GW^0(X)\longrightarrow\W^0(X)\rightarrow 0
 \end{equation*}
\end{defn}

\paragraph{Shifted Witt groups.}
The groups above can be defined more generally in the context of exact or triangulated categories with dualities. The previous definitions are then recovered by considering the category of vector bundles over \( X \) or its bounded derived category. However, the abstract point of view allows for greater flexibility. In particular, a number of useful variants of Witt groups can be introduced by passing to related categories or dualities. For example, if we take a line bundle \( \lb{L} \) over \( X \) and replace the usual duality \( \vb{E}^\dual:=\HOM(\vb{E},\vb{O}_X) \) on vector bundles by \( \HOM(-,\vb{L}) \) we obtain ``twisted'' Witt groups \( \W^0(X;\lb{L}) \). On the bounded derived category, we can consider dualities that involve shifting complexes, leading to the definition of ``shifted'' Witt groups \( \W^i(X) \). This approach, pioneered by Paul Balmer in \cites{Balmer:TWGI,Balmer:TWGII}, elevates the theory of Witt groups into the realm of cohomology theories. We illustrate the meaning and significance of these remarks with a few of the key properties of the theory, concentrating on the case when \( X \) is a smooth scheme over a field of characteristic not equal to two. The interested but unacquainted reader may prefer to consult \cite{Balmer:Ojanguren60} or \cite{Balmer:Handbook}.
\begin{itemize}
 \item For any line bundle \( \lb{L} \) over \( X \) and any integer \( i \), we have a Witt group
     \[\W^i(X;\lb{L})\]
       This is the \( i^{\text{th}} \) Witt group of \( X \) ``with coefficients in \( \lb{L} \)'', or ``twisted by \( \lb{L} \)''.
       When \( \lb{L} \) is trivial it is frequently dropped from the notation.
 \item The Witt groups are four-periodic in \( i \) and ``two-periodic in \( \lb{L} \)'' in the sense that, for any \( i \) and any line bundles \( \lb{L} \) and \( \lb{M} \) over \( X \), we have canonical isomorphisms
       \begin{align*}
        \W^i(X;\lb{L})&\cong\W^{i+4}(X;\lb{L})\\
        \W^i(X;\lb{L})&\cong\W^i(X;\lb{L}\otimes\lb{M}^{\otimes 2})
       \end{align*}
 \item More generally, for any closed subset \( Z \) of \( X \) we have Witt groups ``with support on Z'', written \( \W_Z^i(X;\lb{L}) \). For \( Z=X \) these agree with \( \W^i(X;\lb{L}) \).
 \item We have long exact ``localization sequences'' relating the Witt groups of \( X \) and \( X-Z \), which can
       be arranged as 12-term exact loops by periodicity.
\end{itemize}
Balmer's approach already works on the level of Grothendieck-Witt groups, as shown in \cite{Walter:TGW}. In this context, the localization sequences take the form
\begin{equation}\label{seq:GW-localization}
\begin{aligned}
 & \GW^i_Z(X) \rightarrow \GW^i(X) \rightarrow \GW^i(X-Z) \rightarrow \\
 & \quad \W^{i+1}_Z(X) \rightarrow \W^{i+1}(X) \rightarrow \W^{i+1}(X-Z) \rightarrow \W^{i+2}_Z(X) \rightarrow \cdots
\end{aligned}
\end{equation}
continuing to the right with shifted Witt groups of \( X \), and similarly for arbitrary twists \( \lb{L} \) \cite{Walter:TGW}*{Theorem~2.4}. However, if one wishes to continue the sequences to the left, one has to revert to the methods of higher algebraic K-theory.

\paragraph{Hermitian K-theory.}\label{sec:hermitian_K}
Recall that the higher algebraic K-groups of a scheme \( X \) can be defined as the homotopy groups of a topological space \( \K(X) \) associated with \( X \). If one replaces \( \K(X) \) by an appropriate spectrum one can similarly define groups \( \K_n(X) \) in all degrees \( n\in\Z \).
On a smooth scheme \( X \), however, the groups in negative degrees vanish.

An analogous construction of hermitian K-theory is developed in \cite{Schlichting:MV}. Given a scheme \( X \) and a line bundle \( \lb{L} \) over \( X \), Schlichting constructs a family of spectra \( \SPGW^i(X;\lb{L}) \) from which hermitian K-groups can be defined as
\begin{equation*}
 \GW^i_n(X;\lb{L}) :=\pi_n(\SPGW^i(X;\lb{L}))
\end{equation*}
In degree \( n=0 \), one recovers Balmer and Walter's Grothendieck-Witt groups, and the Witt groups appear as hermitian K-groups in negative degrees. To be precise, for any smooth scheme \( X \) over a field of characteristic not equal to two one has the following natural identifications:
\begin{align}\label{eq:Schlichting_vs_BW}
                                 \GW^i_0(X;\lb{L})    &\cong \GW^i(X;\lb{L}) \\
  \phantom{\text{ for \( n<0 \)}}    \GW^i_n(X;\lb{L})    &\cong \W^{i-n}(X;\lb{L}) \text{ for \( n<0 \)}
\end{align}
For affine varieties, the identifications of Witt groups may be found in \cite{Hornbostel}: see Proposition~A.4 and Corollary~A.5. For a general smooth scheme \( X\), we can pass to a vector bundle torsor \( T\) over \(X\) such that \( T\) is affine \citelist{\cite{Jouanolou}*{Lemma~1.5}\cite{Hornbostel}*{Lemma~2.1}}.\footnote{%
This step is known as Jouanolou's trick.}
Both Balmer's Witt groups and Schlichting's hermitian K-groups are homotopy invariant in the sense that the groups of \( T \) may naturally be identified with those of \( X \). This is proved for Witt groups in \cite{Gille:homotopy-invariance}*{Corollary~4.2} and may be deduced for hermitian K-theory from the Mayer-Vietoris sequences established in \cite{Schlichting:MV}*{Theorem~1}. The identifications also hold more generally for hermitian K-groups with support \( \GW^i_{n,Z}(X) \) \cite{Schlichting:Karoubi}. They will be used implicitly throughout.

For completeness, we mention how the \( 4 \)-periodic notation used here translates into the traditional notation in terms of \( \KO \)- and \( \mathrm{U} \)-theory, as used for example in \cite{Hornbostel}. Namely, we have
\begin{align*}
 \GW^i_n(X) = \begin{cases}
               \,\,\,\,\KO_n(X)        & \text{ for \( i\equiv \;\;\; 0  \mod 4 \)}\\
               \,\,\,\,\mathrm{U}_n(X) & \text{ for \( i\equiv -1 \)}\\
               _{-}\!\KO_n(X)          & \text{ for \( i\equiv -2 \)}\\
               _{-}\mathrm{U}_n(X)     & \text{ for \( i\equiv -3 \)}\\
              \end{cases}
\end{align*}
(This notation will not be used elsewhere in this paper.)

\subsection{KO-theory}\label{subsec:KO}
We now turn to the corresponding theories in topology. To ensure that the definitions given here are consistent with the literature, we restrict our attention to finite-dimensional CW complexes.\footnote{The key property we need is that any vector bundle over a finite-dimensional CW complex has a stable inverse. See the proof of Theorem~\ref{thm:Unstable_Representability_of_K}.}  Since we are ultimately only interested in topological spaces that arise as complex varieties, this is not a problem. The definitions of \( \K_0 \) and \( \GW^0 \) given above applied to complex vector bundles over a finite-dimensional CW complex \( X \) yield its complex and real topological K-groups \( \K^0(X) \) and \( \KO^0(X) \). Since short exact sequences of vector bundles over \( X \) always split, the definitions may even be simplified:
\begin{defn}\label{def:K_and_KO-concrete}
For a finite-dimensional CW complex \( X \), the complex K-group \( \K^0(X) \) is the free abelian group on isomorphism classes of complex vector bundles over \( X \) modulo the relation \( [\vb{E}\oplus\vb{G}]=[\vb{E}]+[\vb{G}] \). Likewise, the KO-group \( \KO^0(X) \) is the free abelian group on isometry classes of symmetric complex vector bundles over \( X \) modulo the relation
 \( [(\vb{E},\epsilon)\perp(\vb{G},\gamma)]=[(\vb{E},\epsilon)]+[(\vb{G},\gamma)] \).
\end{defn}

There is a more common description of \( \KO^0(X) \) as the K-group of real vector bundles. The equivalence with the definition given here can be traced back to the fact that the orthogonal group \( \mathrm{O}(n) \) is a maximal compact subgroup of both \( \GL_n(\R) \) and \( \mathrm{O}_n(\C) \), but also seen very concretely along the following lines. We say that a complex bilinear form \( \epsilon \) on a real vector bundle \( \vb{F} \) is real if \( \epsilon\colon{\vb{F}\otimes\vb{F}\rightarrow\C} \) factors through \( \R \).
\begin{lem}\label{lem:UniqueRealSubbundle}
 Let \( (\vb{E},\epsilon) \) be a symmetric complex vector bundle. There exists a unique real subbundle \( \Re(\vb{E},\epsilon)\subset\vb{E} \) such that \( \Re(\vb{E},\epsilon)\otimes_{\R}\C=\vb{E} \) and such that the restriction of \( \epsilon \) to \( \Re(\vb{E},\epsilon) \) is real and positive definite. Concretely, a fibre of \( \Re(\vb{E},\epsilon) \) is given by the real span of any orthonormal basis of the corresponding fibre of \( \vb{E} \).
\end{lem}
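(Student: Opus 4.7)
The plan is to reduce to a fibrewise statement and glue, using an auxiliary Hermitian metric to pin down a canonical choice.

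First I would handle the fibrewise case. Given a single fibre $E$ equipped with a non-degenerate symmetric $\C$-bilinear form $\epsilon$, a Gram--Schmidt procedure (valid because $\C$ contains square roots) produces an $\epsilon$-orthonormal basis $e_1,\dots,e_n$ of $E$. Its $\R$-span $V$ is then a candidate for the real subspace: $V\otimes_\R\C=E$ because the $e_i$ remain $\C$-linearly independent, and $\epsilon$ restricted to $V$ has identity Gram matrix, hence is real and positive definite on $V$. The role of orthonormality in the ``concrete description'' is that, after fixing an auxiliary Hermitian form $h$ on $E$, bases orthonormal for both $\epsilon$ and $h$ are related by matrices in $\U(n)\cap\nonitalicO(n,\C)=\nonitalicO(n,\R)$, so their $\R$-span is independent of the choice.

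For the global construction I would first equip $\vb{E}$ with a Hermitian metric $h$ by a partition-of-unity argument on the finite-dimensional CW base $X$. The pair $(\epsilon,h)$ then yields a canonical $\C$-antilinear bundle endomorphism $\sigma\colon\vb{E}\to\vb{E}$ characterised by $h(\sigma v,w)=\epsilon(v,w)$. One checks that $\sigma^2$ is $\C$-linear, $h$-self-adjoint, and positive definite (since $h(\sigma^2 v,v)=h(\sigma v,\sigma v)>0$ for $v\neq 0$). Taking its positive square root $p=\sqrt{\sigma^2}$ fibrewise via continuous functional calculus, the polar factor $\tau=\sigma p^{-1}$ is a $\C$-antilinear involution, $\tau^2=\id$. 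The fixed-point subbundle of $\tau$ is defined to be $\Re(\vb{E},\epsilon)$. That $\epsilon|_{\Re}$ is real follows from the $\tau$-compatibility $\overline{\epsilon(v,w)}=\epsilon(\tau v,\tau w)$, and positive definiteness follows from the positivity of $h$ together with the relation $h(v,w)=\epsilon(v,w)$ on $\Re$ (up to the positive adjustment given by $p$).

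The main expected obstacle is confirming that the construction is intrinsic to $(\vb{E},\epsilon)$ --- i.e.\ that the resulting $\Re$ does not depend on the auxiliary $h$. The natural way to see this is to show that any two Hermitian metrics $h,h'$ give polar factors $\tau,\tau'$ with the same fixed locus: at each fibre, joint orthonormal bases for $(\epsilon,h)$ and for $(\epsilon,h')$ differ by an element of $\nonitalicO(n,\R)$, so their $\R$-spans agree. With this in hand, the ``concrete description'' of a fibre of $\Re(\vb{E},\epsilon)$ as the real span of any orthonormal basis follows, provided one reads ``orthonormal'' as orthonormal with respect to both $\epsilon$ and a compatible Hermitian form; the remaining verifications that $\Re(\vb{E},\epsilon)$ is locally trivial of the correct rank and satisfies $\Re(\vb{E},\epsilon)\otimes_\R\C=\vb{E}$ are then routine.
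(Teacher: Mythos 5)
The construction via an auxiliary Hermitian metric $h$ and the polar factor $\tau = \sigma(\sigma^2)^{-1/2}$ is a standard and perfectly rigorous way to produce a real subbundle, and is genuinely different from the paper's proof (which trivializes $(\vb{E},\epsilon)$ locally and glues copies of $\R^r$). But the step you single out as the main issue --- independence of $h$ --- is exactly where the argument breaks, and it cannot be repaired. Your argument claims that a basis jointly orthonormal for $(\epsilon,h)$ and one jointly orthonormal for $(\epsilon,h')$ differ by an element of $\nonitalicO(n,\R)$; in fact the only thing the two bases share is being $\epsilon$-orthonormal, so the change of basis lies in $\nonitalicO(n,\C)$, and there is no reason for it to be unitary for either $h$ or $h'$. (The statement you correctly used earlier applies only to bases jointly orthonormal for the \emph{same} pair $(\epsilon,h)$: those differ by $\U_h(n)\cap\nonitalicO(n,\C)\cong\nonitalicO(n,\R)$.) And indeed the fixed locus of $\tau$ genuinely depends on $h$: already over a point, with $(\vb{E},\epsilon)=(\C^2,\id)$ and $h$ with matrix $\mm{1 & i\alpha\\-i\alpha & 1}$, $0<\alpha<1$, one computes $\sigma^2=\tfrac{1}{1-\alpha^2}\id$ and $\tau(v)=\tfrac{1}{\sqrt{1-\alpha^2}}\mm{1 & -i\alpha\\ i\alpha & 1}\bar v$, whose fixed subspace is not $\R^2$.

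The failure is structural rather than a defect of your particular argument: over a single fibre $(\C^n,\id)$, every real subspace $A\cdot\R^n$ with $A\in\nonitalicO(n,\C)$ satisfies both conditions of the lemma, and $\nonitalicO(n,\C)/\nonitalicO(n,\R)$ is not a point once $n\geq 2$. So the uniqueness the lemma asserts --- and which the paper's own proof invokes when it glues the local $\R^r$'s ``by uniqueness'' --- does not literally hold; the set of admissible real forms is a torsor over the contractible space $\nonitalicO(n,\C)/\nonitalicO(n,\R)$. That contractibility is what Corollary~\ref{cor:ComplexSymmetric_is_Real} actually needs: your $h$-dependent construction gives a canonical real subbundle once $h$ is chosen, the space of Hermitian metrics on $\vb{E}$ is contractible, and hence the isomorphism class of $\Re(\vb{E},\epsilon)$ is independent of $h$. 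The right fix is therefore to drop the fibrewise uniqueness claim, and either phrase the lemma as producing a real subbundle canonically once a Hermitian metric is fixed (well defined up to isomorphism), or argue directly via reduction of the structure group of $(\vb{E},\epsilon)$ from $\nonitalicO(n,\C)$ to its maximal compact $\nonitalicO(n,\R)$, unique up to homotopy over a paracompact base.
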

\begin{cor}\label{cor:ComplexSymmetric_is_Real}
For any CW complex \( X \),
the monoid of isomorphism classes of real vector bundles over \( X \) is isomorphic to the monoid of isometry classes of symmetric complex vector bundles over \( X \) (with respect to the operations \( \oplus \) and \( \perp \), respectively).
\end{cor}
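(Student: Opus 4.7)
The plan is to use Lemma~\ref{lem:UniqueRealSubbundle} directly: it assigns to any symmetric complex vector bundle \( (\vb{E},\epsilon) \) a canonical real subbundle \( \Re(\vb{E},\epsilon) \), and this furnishes the map in one direction. The inverse sends a real vector bundle \( \vb{F} \) to the complexification \( \vb{F}\otimes_{\R}\C \) equipped with the \( \C \)-bilinear extension of a chosen positive definite inner product on \( \vb{F} \).

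First, I would verify that \( (\vb{E},\epsilon)\mapsto[\Re(\vb{E},\epsilon)] \) descends to isometry classes. An isometry \( i\colon(\vb{E},\epsilon)\to(\vb{E}',\epsilon') \) necessarily carries (complex) orthonormal frames to orthonormal frames, hence maps the real span of one to the real span of the other; by the explicit description in the lemma, this restricts to an isomorphism \( \Re(\vb{E},\epsilon)\cong\Re(\vb{E}',\epsilon') \) of real bundles. Additivity \( \Re\bigl((\vb{E},\epsilon)\perp(\vb{G},\gamma)\bigr)\cong\Re(\vb{E},\epsilon)\oplus\Re(\vb{G},\gamma) \) is immediate from the same description, since concatenating orthonormal frames of the summands produces an orthonormal frame of the orthogonal sum.

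Next, I would construct the inverse. Given a real vector bundle \( \vb{F} \) over the (paracompact) CW complex \( X \), choose a positive definite inner product \( g \) on \( \vb{F} \) by the usual partition of unity argument, and let \( g_{\C} \) denote its unique \( \C \)-bilinear extension to \( \vb{F}\otimes_{\R}\C \); this is manifestly non-degenerate and symmetric. The main technical point is that \( [(\vb{F}\otimes_{\R}\C,g_{\C})] \) is independent of the choice of \( g \). This I would prove by observing that the fibrewise space of positive definite real inner products on \( \vb{F} \) is convex, so any two choices \( g_{0},g_{1} \) are linked by the path \( g_{t}:=(1-t)g_{0}+tg_{1} \), yielding a symmetric complex bundle over \( X\times[0,1] \) which restricts to the two endpoints; homotopy invariance of isomorphism classes of complex vector bundles over CW complexes, applied to the underlying bundle together with the isometry criterion, then identifies the two classes. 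Additivity and well-definedness on isomorphism classes of \( \vb{F} \) are straightforward.

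Finally, I would check that the two assignments are mutually inverse. Starting from \( (\vb{E},\epsilon) \), Lemma~\ref{lem:UniqueRealSubbundle} gives \( \Re(\vb{E},\epsilon)\otimes_{\R}\C=\vb{E} \) with \( \epsilon \) recovered as the \( \C \)-bilinear extension of its restriction, so the round trip reproduces \( (\vb{E},\epsilon) \) on the nose. Conversely, starting from a real bundle \( \vb{F} \) with any chosen inner product \( g \), the unique real subbundle of \( (\vb{F}\otimes_{\R}\C,g_{\C}) \) on which \( g_{\C} \) is real and positive definite is tautologically \( \vb{F} \) itself. The bulk of the work lies in the well-definedness of the inverse map; everything else is a direct reading of the lemma together with the additivity observations above.
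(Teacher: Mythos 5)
Your proposal is correct and follows essentially the same route as the paper: both directions use $\Re(\vb{E},\epsilon)$ and complexification of a chosen inner product, with the only work being well-definedness of the inverse. The paper dispatches that point by citing that an inner product on a real bundle is unique up to isometry (and refers to Milnor--Husemoller); your convexity-plus-homotopy-invariance argument is a perfectly good way to unpack that same fact.
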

\begin{proof}[Proof of Lemma~\ref{lem:UniqueRealSubbundle}]
In the case of a vector bundle over a point we may assume without loss of generality that
\begin{equation*}
 (\vb{E},\epsilon)=\big(\C^r,\mm{ 1 &  & 0 \\  & \raisebox{0pt}[10pt]{\( \ddots \)} &   \\ 0 &  & 1}\big)
\end{equation*}
Clearly, the subspace \( \R^r\subset\C^r \) has the required properties. Uniqueness follows from elementary linear algebra.
If \( (\vb{E},\epsilon) \) is an arbitrary symmetric complex vector bundle over a space \( X \), then any point of \( X \) has some neighbourhood over which \( (\vb{E},\epsilon) \) can be trivialized in the form above. We know how to define \( \Re(\vb{E},\epsilon) \) over each such neighbourhood, and by uniqueness these local bundles can be glued together.
\end{proof}

\begin{proof}[Proof of Corollary~\ref{cor:ComplexSymmetric_is_Real}]
A map in one direction is given by sending a symmetric complex vector bundle \( (\vb{E},\epsilon) \) to \( \Re(\vb{E},\epsilon) \). Conversely, with any real vector bundle \( \vb{E} \) over \( X \) we may associate a symmetric complex vector bundle \( (\vb{E}\otimes_{\R}\C,\sigma_{\C}) \), where \( \sigma_{\C} \) is the \( \C \)-linear extension of some inner product \( \sigma \) on \( \vb{E} \). Since \( \sigma \) is defined uniquely up to isometry, so is \( (\vb{E}\otimes_{\R}\C,\sigma_{\C}) \). See \cite{MilnorHusemoller}*{Chapter~V, \S~2} for a proof that avoids the uniqueness part of the preceding lemma.
\end{proof}

\paragraph{Representing topological K-groups.}
A standard construction of the cohomology theories associated with \( \K^0 \) and \( \KO^0 \) is based on the fact that these functors are representable in the homotopy category \( \uH \) of topological spaces. The starting point is the homotopy classification of vector bundles: Let us write \( \Gr_{r,n} \) for the Grassmannian \( \Gr(r,\C^{r+n}) \) of complex \( r \)-bundles in \( \C^{r+n} \), and let \( \Gr_r \) be the union of \( \Gr_{r,n}\subset\Gr_{r,n+1}\subset\cdots \) under the obvious inclusions. Denote by \( \vb{U}_{r,n} \) and \( \vb{U}_r \) the universal \( r \)-bundles over these spaces. For any connected paracompact Hausdorff space \( X \) we have a one-to-one correspondence between the set \( \mathrm{Vect}_r(X) \) of isomorphism classes of rank \( r \) complex vector bundles over \( X \) and homotopy classes of maps from \( X \) to \( \Gr_r \): a homotopy class \( [f] \) in \( \uH(X,\Gr_{r}) \) corresponds to the pullback of \( \vb{U}_r \) along \( f \) \cite{Husemoller:FibreBundles}*{Chapter~3, Theorem~7.2}.

To describe \( \K^0(X) \), we need to pass to \( \Gr \), the union of the \( \Gr_r \) under the embeddings \( \Gr_r\hookrightarrow \Gr_{r+1} \) that send a complex \( r \)-plane \( W \) to \( \C\oplus W \).
\begin{thm}\label{thm:Unstable_Representability_of_K} For finite-dimensional CW complexes \( X \) we have natural isomorphisms
 \begin{equation}\label{eq:K0-unstable}
  \K^0(X)\cong\uH(X,\Z\times{\Gr})
 \end{equation}
 such that, for \( X=\Gr_{r,n} \), the class \( [\vb{U}_{r,n}]+(d-r)[\C] \) in \( \K^0(\Gr_{r,n}) \) corresponds to the inclusion \( \Gr_{r,n}\hookrightarrow\{d\}\times\Gr_{r,n}\hookrightarrow\Z\times\Gr \).
\end{thm}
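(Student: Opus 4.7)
The plan is to bootstrap from the homotopy classification \(\Vect_r(X) \cong \uH(X, \Gr_r)\) quoted above, exploiting the fact (highlighted in the footnote) that every vector bundle over a finite-dimensional CW complex \(X\) admits a stable inverse: there exists an \(F\) and an \(n\) with \(E \oplus F \cong \C^n\). This allows me to describe \(\K^0(X)\) concretely as equivalence classes of pairs \(([\vb{E}], n)\) with \(\vb{E}\) a vector bundle and \(n \in \Z\), where \(([\vb{E}], n) \sim ([\vb{E}'], n')\) precisely when \(\vb{E} \oplus \C^{n'+k} \cong \vb{E}' \oplus \C^{n+k}\) for some \(k \geq 0\); the class of \(([\vb{E}], n)\) plays the role of \([\vb{E}] - [\C^n]\).

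Next I define the map \(\phi\colon \K^0(X) \to \uH(X, \Z \times \Gr)\) as follows. A rank-\(r\) bundle \(\vb{E}\) classifies a map \(X \to \Gr_r\), which I compose with \(\Gr_r \hookrightarrow \Gr\) to obtain a class \([\vb{E}]_\Gr \in \uH(X, \Gr)\); then set \(\phi([\vb{E}], n) := (r - n, [\vb{E}]_\Gr)\). Well-definedness under the above equivalence is immediate after comparing ranks and unpacking the colimit description \(\uH(X, \Gr) = \colim_r \uH(X, \Gr_r)\). That \(\phi\) is a homomorphism reduces to checking that the \(H\)-space structure on \(\Z \times \Gr\) induced by the maps \(\Gr_{r,n} \times \Gr_{s,m} \to \Gr_{r+s, n+m}\), \((W, W') \mapsto W \oplus W'\), corresponds under the classification theorem to direct sum of bundles; this is a standard check on universal bundles.

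For the inverse, any class in \(\uH(X, \Z \times \Gr)\) projects to an integer \(d\) and a map \(X \to \Gr\) which, since \(X\) is finite-dimensional, factors up to homotopy through some \(\Gr_r\), hence corresponds to a bundle \(\vb{E}\) of rank \(r\); the assignment \((d, [\vb{E}]_\Gr) \mapsto ([\vb{E}], r - d)\) (after enlarging \(\vb{E}\) by trivial summands if needed so that \(r \geq d\)) furnishes the inverse, establishing surjectivity. Injectivity comes down to the observation that if \(\vb{E}\) and \(\C^r\) agree in the colimit then \(\vb{E} \oplus \C^m \cong \C^{r+m}\) for some \(m\), which in \(\K^0(X)\) forces \([\vb{E}] = r[\C]\). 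The normalization claim is then automatic: on \(X = \Gr_{r,n}\) the tautological bundle \(\vb{U}_{r,n}\) is classified by the identity map, so \(\phi([\vb{U}_{r,n}] + (d-r)[\C])\) is the pair \((d, \iota)\) with \(\iota\colon \Gr_{r,n} \hookrightarrow \Gr\) the canonical inclusion. The subtlest point is the \(H\)-space compatibility in the second step, which is standard but requires some care because \(\Z \times \Gr\) admits several reasonable \(H\)-space structures that must be matched up.
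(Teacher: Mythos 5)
Your argument is correct and follows essentially the same route as the paper: both proofs start from the homotopy classification of vector bundles, describe $\uH(X,\Z\times\Gr)$ via the colimit filtration of $\Z\times\Gr$, and invoke the stable-inverse property of bundles over a finite-dimensional CW complex to pass between the Grassmannian description and $\K^0(X)$. The only organizational difference is that you write down an explicit map $\phi$ and its candidate inverse and then check injectivity separately, whereas the paper defines a single map from $\coprod_d\colim_n\mathrm{Vect}_n(X)$ into $\K^0(X)$ and verifies bijectivity directly; these are the same calculation presented in two orders. Two minor points worth noting: the stated homotopy classification of bundles requires $X$ connected (and paracompact Hausdorff, which the paper records explicitly), so a reduction to the connected case should be included; and your parenthetical "enlarging $\vb{E}$ so $r\geq d$" is unnecessary since your pair description allows $n=r-d$ to be any integer.
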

\begin{proof}
The theorem is of course well-known, see for example \cite{Adams}*{page 204}. To deduce it from the homotopy classification of vector bundles, we note first that any CW complex is paracompact and Hausdorff \cite{Hatcher:VBKT}*{Proposition~1.20}. Moreover, we may assume that \( X \) is connected. The product \( \Z\times\Gr \) can be viewed as the colimit of the inductive system
\begin{equation*}
 \coprod_{d\geq 0} \{d\}\times \Gr_d \hookrightarrow \coprod_{d\geq -1 } \{d\}\times \Gr_{d+1}
                                   \hookrightarrow \coprod_{d\geq -2 } \{d\}\times \Gr_{d+2}
                                   \hookrightarrow \dots \subset \Z\times\Gr
\end{equation*}
Any continuous map from \( X \) to \( \Z\times\Gr \) factors though one of the components \( \colim_n(\{d\}\times\Gr_n) \). By cellular approximation, it is  in fact homotopic to a map that factors through \( \{d\}\times\Gr_n \) for some \( n \). Thus,
\begin{equation*}
\uH(X,\Z\times\Gr)\cong\coprod_{d\in\Z}\colim_n\mathrm{Vect}_n(X)
\end{equation*}
where the colimit is taken over the maps \( \mathrm{Vect}_n(X)\rightarrow\mathrm{Vect}_{n+1}(X) \) sending a vector bundle \( \vb{E} \) to \( \C\oplus\vb{E} \). We define a map from the coproduct to \( \K^0(X) \) by sending a vector bundle \( \vb{E} \) in the \( d^{\text{th}} \) component to the class \( [\vb{E}]+(d-\rank\vb{E})[\C] \) in \( \K^0(X) \). To see that this is a bijection, we use the fact that every vector bundle \( \vb{E} \) over a finite-dimensional CW complex has a stable inverse: a vector bundle \( \vb{E}^{\perp} \) over \( X \) such that \( {\vb{E}\oplus\vb{E}^{\perp}} \) is a trivial bundle \cite{Husemoller:FibreBundles}*{Chapter~3, Proposition~5.8}.
\end{proof}

If we replace the complex Grassmannians by real Grassmannians \( {\R}{\Gr_{r,n}} \), we obtain the analogous statement that \( \KO^0 \) can be represented by \( \Z\times\R{\Gr} \). Equivalently, but more in the spirit of Definition~\ref{def:K_and_KO-concrete}, we could work with the following spaces:
\begin{defn}
Let \( (V,\nu) \) be a symmetric complex vector space, and let \( \Gr(r,V) \) be the Grassmannian of complex \( k \)-planes in \( V \). The ``non-degenerate Grassmannian''
\begin{equation*}
 \Grnd(r,(V,\nu))
\end{equation*}
is the open subspace of \( \Gr(r,V) \) given by \( r \)-planes \( T \) for which the restriction \( \nu|_T \) is non-degenerate.
\end{defn}
Complexification induces an inclusion of \( {\R}{\Gr(k,\Re(V,\nu))} \) into \( \Grnd(r,(V,\nu)) \), which, by Lemma~\ref{lem:Grnd_vs_RGr} below, is a homotopy equivalence. So let \( \Grnd_{r,n} \) abbreviate \( \Grnd(r,\HH^{r+n}) \), where \( \HH \) is the hyperbolic plane \( (\C^2,\mm{0 & 1 \\ 1 & 0}) \), and let \( \vb{U}_{r,n}^{\text{nd}} \) denote the restriction of the universal bundle over \( \Gr(r,\C^{2r+2n}) \) to \( \Grnd_{r,n} \).
Then colimits \( \Grnd_r \) and \( \Grnd \) can be defined in the same way as for the usual Grassmannians, and, for finite-dimensional CW complexes \( X \), we obtain natural isomorphisms
\begin{align}\label{eq:KO0_unstable}
 \KO^0(X)&\cong\uH(X,\Z\times{\Grnd})
\end{align}
Here, for even \( (d-r) \), the inclusion \( \Grnd_{r,n}\hookrightarrow\{d\}\times\Grnd_{r,n}\hookrightarrow\Z\times\Grnd \) corresponds to the class of \( [\vb{U}_{r,n}^{\text{nd}}]+\frac{d-r}{2}[\HH] \) in \( \GW^0(\Grnd_{r,n}) \).

\begin{lem}\label{lem:Grnd_vs_RGr}
 For any symmetric complex vector space \( (V,\nu) \), the following inclusion is a homotopy equivalence:
 \begin{align*}
  \R{\Gr(k,\Re(V,\nu))} &\overset{j}{\hookrightarrow} \Grnd(k,(V,\nu))\\
  U                     &\mapsto                      U\otimes_\R\C
 \end{align*}
\end{lem}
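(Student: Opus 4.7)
The plan is to realise both Grassmannians as homogeneous spaces under compatible actions of the real and complex orthogonal groups, and then to invoke the classical fact that \( \mathrm{O}(m) \) is a deformation retract of \( \mathrm{O}_m(\C) \) for any \( m \).

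Set \( n := \dim_\C V \). A choice of orthonormal basis of \( \Re(V,\nu) \) (which exists by Lemma~\ref{lem:UniqueRealSubbundle}) identifies \( (V,\nu) \) with the standard symmetric space \( (\C^n,\sum x_i^2) \) and \( \Re(V,\nu) \) with \( \R^n\subset\C^n \). Under these identifications \( \mathrm{O}_n(\C) \) acts on \( \Grnd(k,(V,\nu)) \) and \( \mathrm{O}(n) \) acts on \( \R\Gr(k,\R^n) \), and the map \( j \) is manifestly equivariant along the inclusion \( \mathrm{O}(n)\hookrightarrow\mathrm{O}_n(\C) \). By Witt's extension theorem both actions are transitive; the stabilisers of the reference subspaces \( \R^k\subset\R^n \) and \( \C^k\subset\C^n \) are \( \mathrm{O}(k)\times\mathrm{O}(n-k) \) and \( \mathrm{O}_k(\C)\times\mathrm{O}_{n-k}(\C) \) respectively. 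We thus obtain a commutative ladder of fibre bundles whose leftmost vertical map is the product inclusion \( \mathrm{O}(k)\times\mathrm{O}(n-k)\hookrightarrow\mathrm{O}_k(\C)\times\mathrm{O}_{n-k}(\C) \), whose middle vertical is \( \mathrm{O}(n)\hookrightarrow\mathrm{O}_n(\C) \), and whose rightmost vertical is \( j \).

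Polar decomposition exhibits each of the real orthogonal groups as a deformation retract of its complexification, so the two left-hand verticals in the ladder are homotopy equivalences. A five-lemma chase in the associated long exact sequences of homotopy groups then forces \( j \) to be a weak homotopy equivalence. Since both \( \R\Gr(k,\Re(V,\nu)) \) and \( \Grnd(k,(V,\nu)) \) are smooth manifolds (the latter being open in \( \Gr(k,V) \)), they have the homotopy type of CW complexes, and Whitehead's theorem upgrades the weak equivalence to an honest homotopy equivalence.

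The main point requiring care is transitivity of the \( \mathrm{O}_n(\C) \)-action on \( \Grnd(k,(V,\nu)) \) and the identification of the stabiliser. Over \( \C \), a non-degenerate symmetric bilinear form of given rank is unique up to isometry, so any two non-degenerate \( k \)-planes \( T_1,T_2\subset V \) are isometric, as are their \( \nu \)-orthogonal complements. Witt's theorem patches these local isometries together into a global element of \( \mathrm{O}_n(\C) \) sending \( T_1 \) to \( T_2 \); the stabiliser of \( \C^k\subset\C^n \) decomposes accordingly as \( \mathrm{O}_k(\C)\times\mathrm{O}_{n-k}(\C) \) acting diagonally on \( \C^k\oplus\C^{n-k} \). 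All remaining ingredients—local triviality of homogeneous bundles, polar decomposition, the five-lemma, and Whitehead's theorem—are standard.
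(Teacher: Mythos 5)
Your proof is correct, but it takes a genuinely different route from the paper. The paper argues entirely by hand: it builds an explicit retraction $r\colon \Grnd(k,(V,\nu)) \to \R\Gr(k,\Re(V,\nu))$ using the projection $V = \Re(V,\nu) \oplus i\Re(V,\nu) \twoheadrightarrow \Re(V,\nu)$ composed with the $\Re$-functor of Lemma~\ref{lem:UniqueRealSubbundle}, and then interpolates via a one-parameter family $\pi_t$ of endomorphisms of $V$ to exhibit $j\circ r \simeq \id$. That argument is elementary and self-contained, needs no fibration theory, no Whitehead theorem, and actually produces a deformation retraction rather than merely a homotopy equivalence. Your approach, by contrast, packages both spaces as homogeneous spaces $\mathrm{O}(n)/(\mathrm{O}(k)\times\mathrm{O}(n-k))$ and $\mathrm{O}_n(\C)/(\mathrm{O}_k(\C)\times\mathrm{O}_{n-k}(\C))$, uses Witt extension for transitivity, polar (Cartan) decomposition to see $\mathrm{O}(m)\hookrightarrow\mathrm{O}_m(\C)$ is a homotopy equivalence, and closes with a five-lemma chase plus Whitehead. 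This is a clean conceptual argument and highlights that the lemma is really the statement that a maximal compact is a deformation retract, transported through a homogeneous fibration; it also transfers readily to other families (symplectic, unitary) of non-degenerate Grassmannians. The trade-off is that it invokes a fair amount of Lie-theoretic and homotopy-theoretic machinery for a statement the paper settles with a two-line homotopy, and you should be slightly careful that the five-lemma argument near $\pi_0$ and $\pi_1$ requires the usual bookkeeping with pointed sets and connectedness (both base spaces are indeed connected here, since $\pi_0(H)\to\pi_0(G)$ is surjective in both rows, so this works out).
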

\begin{proof}
Consider the projection \( \pi\colon{V=\Re(V,\nu)\oplus i\Re(V,\nu)\twoheadrightarrow\Re(V,\nu)} \). We define a retract \( r \) of \( j \) by sending a complex \( k \)-plane \( T\in\Grnd(k,(V,\nu)) \) to
\(  \pi(\Re(T,\nu|_T)) \subset\Re(V,\nu) \). This is indeed a linear subspace of real dimension \( k \): since \( \nu \) is positive definite on \( \Re(T,\nu|_T) \) but negative definite on \( i\Re(V,\nu) \), the intersection \( \Re(T,\nu|_T)\cap i\Re(V,\nu) \) is trivial.

More generally, we can define a family of endomorphisms of \( V \) parametrized by \( t\in[0,1] \) by
\begin{align*}
 \pi_t\colon{\Re(V,\nu)\oplus i\Re(V,\nu)} & \twoheadrightarrow \Re(V,\nu)\oplus i\Re(V,\nu) \\
                (x,y)                       & \mapsto            (x,ty)
\end{align*}
This family interpolates between the identity \( \pi_1 \) and the projection \( \pi_0 \), which we can identify with \( \pi \). We claim that \begin{equation*}
 \pi_t(\Re(T,\nu|_T)) \subset V
\end{equation*}
is a real linear subspace of dimension \( k \) on which \( \nu \) is real and positive definite. The claim concerning the dimension has already been verified in the case \( t=0 \) and follows for non-zero \( t \) from the fact that \( \pi_t \) is an isomorphism. Now take a non-zero vector \( v\in\pi_t(\Re(T,\nu|_T)) \) and write it as \( v=x+tiy \), where \( x,y\in\Re(V,\nu) \) and \( x+iy\in\Re(T,\nu|_T) \). Since \( \nu(x,x) \), \( \nu(y,y) \) and \( \nu(x+iy,x+iy) \) are all real we deduce that \( \nu(x,y)=0 \); it follows that \( \nu(v,v) \) is real as well. Moreover, since \( \nu(x+iy,x+iy) \) is positive we have \( \nu(x,x)>\nu(y,y) \), so that \( \nu(v,v)> (1-t^2)\nu(y,y) \). In particular, \( \nu(v,v)>0 \) for all \( t\in[0,1] \), as claimed.

It follows that \( T\mapsto\pi_t(\Re(T,\nu|_T))\otimes_\R\C \) defines a homotopy from \( j\circ r \) to the identity on \( \Grnd(k,(V,\nu)) \).
\end{proof}

\paragraph{K-spectra and cohomology theories.}
The infinite Grassmannian \( \Gr \) can be identified with the classifying space \( \BU \) of the infinite unitary group. Consequently, \( \K^0 \) can be represented by \( \Z\times{\BU} \), which by Bott periodicity is equivalent to its own two-fold loopspace \( \Omega^2(\Z\times{\BU}) \). This can be used to construct a \( 2 \)-periodic \( \Omega \)-spectrum \( \SPtK \) in the stable homotopy category \( \SH \) whose even terms are all given by \( {\Z\times\BU} \). Similarly, \( \R{\Gr} \) is equivalent to the classifying space \( \BO \) of the infinite orthogonal group, and Bott periodicity in this case says that \( \Z\times\BO \) is equivalent to \( \Omega^8(\Z\times\BO) \).
Thus, one obtains a spectrum \( \SPtKO \) in \( \SH \) which is \( 8 \)-periodic. The associated cohomology theories are given by
\begin{align*}
 \K^i(X)  &:=\SH(\SgmInf(X_+),S^i\smsh\SPtK)\\
 \KO^i(X) &:=\SH(\SgmInf(X_+),S^i\smsh\SPtKO)
\end{align*}
where \( X_+ \) denotes the union of \( X \) and a disjoint base point, and \( \SgmInf \) is the functor assigning to a pointed space its suspension spectrum. We refer the reader to \cite{Adams}*{III.2} for background and details.

For convenience and later reference, we include here the values of the theories on a point. Since we are in fact dealing with multiplicative theories, these can be summarized in the form of coefficient rings:
\begin{align}
 \K^*(\point)&=\Z\big[g,g^{-1}\big]
 \label{eq:K-coefficients}\\
 \KO^*(\point)&=\quotient{\Z\big[\eta, \alpha, \lambda, \lambda^{-1}\big]}{(2\eta,\;\eta^3,\;\eta\alpha,\;\alpha^2-4\lambda)}
 \label{eq:KO-coefficients}
\end{align}
where \( g \) is of degree \( -2 \) and \( \eta \), \( \alpha \) and \( \lambda \) have degrees \( -1 \), \( -4 \) and \( -8 \), respectively \cite{Bott:K}*{pages~66--74\footnote{The multiplicative relations among the generators are given on page~74, but unfortunately the relation \( \eta\alpha=0 \) is missing. This omission seems to have pervaded much of the literature, and I am indebted to Ian Grojnowski for pointing out the same mistake in an earlier version of this paper. Of course, the relation follows from the fact that \( \KO^{-5}(\point)=0 \).}}.

\subsection{Comparison}
Now suppose \( X \) is a smooth complex variety. We write \( X(\C) \) for the set of complex points of \( X \) equipped with the analytic topology. If \( \vb{E} \) is a vector bundle over \( X \) then \( \vb{E}(\C) \) has the structure of a complex vector bundle over \( X(\C) \), so that we obtain natural maps
\begin{align}
 \K_0(X)  \rightarrow &\K^0(X(\C)) \label{eq:naive_comparison_K}\\
 \GW^0(X) \rightarrow &\KO^0(X(\C))\label{eq:naive_comparison_KO}
\end{align}
and an induced map
\begin{align}
\W^0(X) \rightarrow  &\frac{\KO^0(X(\C))}{\K^0(X(\C))}
\end{align}

We now wish to extend these maps to be defined on \( \GW^i(X) \) and \( \W^i(X) \) for arbitrary \( i \), and also on groups with support and twisted groups. Let us comment on some ``elementary'' constructions that are possible before outlining the approach that we will ultimately follow here.

Firstly, one way to extend the maps to the groups \( \GW^i(X) \) and \( \W^i(X) \) is to use the multiplicative structure of the theories together with Walter's results on projective bundles \cite{Walter:PB}. Namely, for any variety \( X \) one has isomorphisms
\begin{align*}
  \GW^i(X\times\P^1)         &\cong \GW^i(X)\oplus\GW^{i-1}(X)\\
  \KO^{2i}(X(\C)\times S^2)  &\cong \KO^{2i}(X(\C))\oplus\KO^{2i-2}(X(\C))
\end{align*}
This allows an inductive definition of comparison maps, at least for all negative \( i \). Basic properties of these maps, for example compatibility with the periodicities of Grothendieck-Witt and KO-groups, can be checked by direct calculations.

It is less clear how to obtain maps on Witt groups with restricted supports. One possibility, pursued in \cite{Me:RSK-Essay}, is to work on the level of complexes of vector bundles and adapt a construction of classes in relative K-groups described in \cite{Segal} to the case of KO-theory. However, it remains unclear to the author how to see in this approach that the resulting maps are compatible with the boundary morphisms in localization sequences.

Theorem~\ref{thm:mainthm} below could in fact be proved without knowing that the comparison maps respect the boundary morphisms in localization sequences in general. However, \( \A^1 \)-homotopy theory provides an alternative construction of a comparison map for which this property immediately follows from the construction, and which in any case is so compellingly elegant that it would be difficult to argue in favour of any other approach.

\subsection{$\mathbb{A}^1$-homotopy theory}
Theorem~\ref{thm:Unstable_Representability_of_K}
describing \( \K^0 \) in terms of homotopy classes of maps to Grassmannians has an analogue in algebraic geometry, in the context of \( \A^1 \)-homotopy theory. Developed mainly by Morel and Voevodsky, the theory provides a general framework for a homotopy theory of schemes emulating the situation for topological spaces. The authoritative reference is \cite{MorelVoevodsky}; closely related texts by the same authors are \cite{Voevodsky:ICM}, \cite{Morel:Asterisque} and \cite{Morel:Introduction}. See \cite{Nordfjordeid} for a textbook introduction and \cite{Dugger:UHT} for an enlightening perspective on one of the main ideas.
\newcommand{\Spaces}{\mathrm{Spc}}
\newcommand{\Smooth}{\mathrm{Sm}}

We summarize the main points relevant for us in just a few sentences. The category \( \Smooth_k \) of smooth schemes over a field \( k \) can be embedded into some larger category of ``spaces'' \( \Spaces_k \) which is closed under small limits and colimits, and which can be equipped with a model structure. The \( \A^1 \)-homotopy category \( \uH(k) \) over \( k \) is the homotopy category associated with this model category.

In fact, there are several possible choices for \( \Spaces_k \) and many possible model structures yielding the same homotopy category \( \uH(k) \). One possibility is to consider the category of simplicial presheaves over \( \Smooth_k \), or the category of simplicial sheaves with respect to the Nisnevich topology. Both categories contain \( \Smooth_k \) as full subcategories via the Yoneda embedding, and they also contain simplicial sets viewed as constant (pre)sheaves. One may thus apply a general recipe for equipping the category of simplicial (pre)sheaves over a site with a model structure (see \cite{Jardine:simplicial_presheaves}). In a crucial last step, one forces the affine line \( \A^1 \) to become contractible by localizing with respect to the set of all projections \( \A^1\times X\twoheadrightarrow X \).

As in topology, we also have a pointed version \( \pH(k) \) of \( \uH(k) \). Remarkably, these categories contain several distinct ``circles'': the simplicial circle \( S^1 \), the ``Tate circle'' \( \G_m=\A^1-0 \) (pointed at 1) and the projective line \( \P^1 \) (pointed at \( \infty \)). These are related by the intriguing formula \( \P^1=S^1\smsh\G_m \).
A common notational convention which we will follow is to define
\begin{equation*}
 S^{p,q}:=(S^1)^{\smsh(p-q)}\smsh\G_m^{\smsh q}
\end{equation*}
for any \( p\geq q \). In particular, we then have \( S^1=S^{1,0} \), \( \G_m=S^{1,1} \) and \( \P^1=S^{2,1} \).

One can take the theory one step further by passing to the stable homotopy category \( \SH(k) \), a triangulated category in which the suspension functors \( {S^{p,q}\smsh-} \) become invertible. This category is usually constructed using \( \P^1 \)-spectra. The triangulated shift functor is given by suspension with the simplicial sphere \( S^{1,0} \).

Finally and crucially, the analogy with topology can be made precise: when we take our ground field \( k \) to be the complex numbers, or more general any subfield of \( \C \), we have a complex realization functor
\begin{align}
 \uH(k)&\rightarrow \uH \label{eq:complex_realization}
\end{align}
that sends a smooth scheme \( X \) to its set of complex points \( X(\C) \) equipped with the analytic topology. There is also a pointed realization functor and, moreover, a triangulated functor of the stable homotopy categories
\begin{align}
 \SH(k) &\rightarrow \SH \label{eq:complex_realization_stable}
\end{align}
which takes \( \SgmInf(X_+) \) to \( \SgmInf(X(\C)_+) \) for any smooth scheme \( X \) \citelist{\cite{Riou:Thesis}*{Th{\'e}o\-r{\`e}me~I.123}\cite{Riou:CatHomStable}*{Th{\'e}o\-r{\`e}me~5.26}}.

\subsection{Representing algebraic and hermitian K-theory}\label{sec:Representing_algK}
Grassmannians of \( r \)-planes in \( k^{n+r} \) can be constructed as smooth projective varieties over any field \( k \). Viewing them as objects in \( \Spaces_k \), we can form their colimits \( \Gr_r \) and \( \Gr \) in the same way as in topology.
The following analogue of Theorem~\ref{thm:Unstable_Representability_of_K} is established in \cite{MorelVoevodsky}*{\S~4}; see Th{\'e}or{\`e}me~III.3 and Assertion~III.4 in \cite{Riou:Thesis}.

\begin{thm}
 For smooth schemes \( X \) over \( k \) we have natural isomorphisms
 \begin{align}\label{eq:K0_unstable}
  \K_0(X) &\cong\uH(k)(X,\Z\times{\Gr})
  \end{align}
 such that the inclusion of \( \Gr_{r,n}\hookrightarrow\{d\}\times\Gr_{r,n}\hookrightarrow\Z\times\Gr \) corresponds to the class \( [\vb{U}_{r,n}]+(d-r)[\OO] \) in \( \K_0(\Gr_{r,n}) \).
\end{thm}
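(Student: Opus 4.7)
The strategy is to follow the proof of Theorem~\ref{thm:Unstable_Representability_of_K}, replacing topological homotopy throughout by \( \A^1 \)-homotopy. Both sides are \( \A^1 \)-invariant functors on smooth schemes over \( k \): the right-hand side by construction of \( \uH(k) \), and \( \K_0 \) by Quillen's homotopy invariance of algebraic K-theory on regular schemes. Applying Jouanolou's trick, I may reduce if convenient to the case in which \( X \) is affine.

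For the map from right to left, I would present \( \Z\times\Gr \) as a filtered colimit of pieces \( \{d\}\times\Gr_{r,n} \) in precisely the same fashion as in the topological proof, with transition maps \( \Gr_r\hookrightarrow\Gr_{r+1} \) sending a rank-\( r \) subspace \( W \) to \( \OO\oplus W \). Any morphism \( f\colon X\to\Z\times\Gr \) in \( \uH(k) \) factors, up to \( \A^1 \)-homotopy, through some \( \{d\}\times\Gr_{r,n} \); pulling back the universal bundle \( \vb{U}_{r,n} \) gives a rank-\( r \) vector bundle on \( X \), and I assign to \( f \) the class \( [f^*\vb{U}_{r,n}]+(d-r)[\OO]\in\K_0(X) \). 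Well-definedness on \( \A^1 \)-homotopy classes follows directly from \( \A^1 \)-invariance of \( \K_0 \): a homotopy \( X\times\A^1\to\Gr_{r,n} \) yields a bundle on \( X\times\A^1 \) whose restrictions to the two fibres have equal classes in \( \K_0(X) \). Compatibility with the transition maps is the same bookkeeping as in the topological argument: the shift \( d-r\mapsto d-(r+1)+1 \) exactly compensates for the added trivial summand.

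For the inverse direction I would appeal to the unstable classification of algebraic vector bundles: for smooth affine \( X \) over \( k \), there is a natural bijection \( \uH(k)(X,\Gr_r)\cong\Vect_r(X) \). Granting this, any element of \( \K_0(X) \) can be written as \( [\vb{E}]+m[\OO] \) for some vector bundle \( \vb{E} \) and integer \( m \), and I send it to the composite \( X\to\Gr_{\rank\vb{E}}\hookrightarrow\{m+\rank\vb{E}\}\times\Gr\hookrightarrow\Z\times\Gr \). Surjectivity is then clear, and injectivity follows from the fact that two bundles represent the same class in \( \K_0(X) \) precisely when they become isomorphic after stabilization by trivial bundles, which matches the identifications forced by the colimit in \( \Z\times\Gr \). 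The normalization on \( \Gr_{r,n} \) stated in the theorem is then built into the definition by construction.

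The genuinely hard input is the unstable classification \( \uH(k)(X,\Gr_r)\cong\Vect_r(X) \) itself, where the geometry of the Grassmannian as a classifying object for algebraic bundles enters. Showing that \( \A^1 \)-homotopic maps pull the universal bundle back to isomorphic bundles rests on homotopy invariance of vector bundles on smooth affine schemes (Lindel's theorem). The converse — that every vector bundle is of the form \( f^*\vb{U}_r \) for some \( \A^1 \)-homotopy class of maps \( f \) — requires the Nisnevich-local structure of \( \Gr_r \) together with the fine properties of the \( \A^1 \)-model category. This is the content of \S4 of Morel--Voevodsky and Chapter~III of Riou's thesis, and it is the step I would expect to take the most care in writing out in full.
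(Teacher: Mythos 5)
The paper does not prove this statement: it cites the result directly, attributing it to \S~4 of Morel--Voevodsky and to Th\'eor\`eme~III.3 and Assertion~III.4 of Riou's thesis. So there is no in-paper proof for you to match; the question is rather whether your sketch reproduces the cited argument or follows a genuinely different route, and whether its ingredients are sound.

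Your route is in fact different, and the difference is significant. You propose to deduce the \( \K_0 \)-representability from an \emph{unstable} classification statement \( \uH(k)(X,\Gr_r)\cong\Vect_r(X) \) for smooth affine \( X \), followed by a colimit argument over \( r \), exactly mirroring the topological proof of Theorem~\ref{thm:Unstable_Representability_of_K}. That unstable classification is a true theorem, but it is a substantially harder and historically much later result (due to Morel, and Asok--Hoyois--Wendt, circa 2012--2017); it is \emph{not} proved in Morel--Voevodsky \S~4 nor in Chapter~III of Riou's thesis, so attributing it to those sources is an error. What those references actually do is more indirect and deliberately avoids classifying individual rank-\( r \) bundles: they identify \( \Z\times\Gr \) (up to \( \A^1 \)-weak equivalence) with an \( \A^1 \)-fibrant replacement of the K-theory presheaf of spaces, using Nisnevich descent for algebraic K-theory (Thomason--Trobaugh) together with homotopy invariance on regular schemes, and then read off \( [X,\Z\times\Gr]_{\A^1}=\K_0(X) \) from fibrancy. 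In short, the cited argument controls the \emph{stable} object \( \Z\times\Gr \) directly rather than each \( \Gr_r \) separately.

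Two further points deserve flagging. First, well-definedness of your map on \( \A^1 \)-homotopy classes needs more than ``\( \A^1 \)-invariance of \( \K_0 \)'': in your setup a homotopy is a map in \( \uH(k) \), not a literal morphism \( X\times\A^1\to\Gr_{r,n} \), so you must first invoke some representability or fibrancy result to rigidify it — which is close to assuming what is to be proved unless you already have the unstable classification in hand. Second, pulling the filtered colimit \( \Gr=\colim_r\Gr_r \) outside \( \uH(k)(X,-) \) is not automatic in a simplicial (pre)sheaf model category; it requires knowing that \( X \) (or \( \SgmInf X_+ \)) is a compact object and that the transition maps are suitably cofibrant. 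Riou's thesis handles this point with care, and it would need to be addressed explicitly in a self-contained write-up. So: your sketch is not wrong, but it reproves the theorem via a stronger and differently sourced input, and glosses two technical points that the cited argument is specifically structured to avoid.
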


An analogous result for hermitian K-theory has recently been obtained by Schlichting and Tripathi\footnote{Talk ``Geometric representation of hermitian K-theory in \( \mathbb{A}^1 \)-homotopy theory'' at the Workshop ``Geometric Aspects of Motivic Homotopy Theory'', 6.--10. September 2010 at the Hausdorff Center for Mathematics, Bonn}: Let \( \Grnd_{r,n} \) denote the ``non-degenerate Grassmannians'' defined as open subvarieties of \( \Gr_{r,r+2n} \) as above, and let \( \Grnd_r \) and \( \Grnd \) be the respective colimits. Then for smooth schemes over \( k \) we have natural isomorphisms
\begin{align}\label{eq:GW0_unstable}
  \GW^0(X)&\cong\uH(k)(X,\Z\times{\Grnd})
\end{align}
It follows from the construction that, when \( (d-r) \) is even, the inclusion of \( \Grnd_{r,n}\hookrightarrow\{d\}\times\Grnd_{r,n}\hookrightarrow\Z\times\Grnd \) corresponds to the class of \( [\vb{U}_{r,n}^{\text{nd}}]+\frac{d-r}{2}[\HH] \) in \( \GW^0(\Grnd_{r,n}) \), where \( \vb{U}_{r,n}^{\text{nd}} \) is the universal symmetric bundle over \( \Grnd_{r,n} \).

The fact that hermitian K-theory is representable in \( \uH(k) \) has been known for longer, see \cite{Hornbostel}. One of the advantages of having a geometric description of a representing space, however, is that one can easily see what its complex realization is. In particular, this gives us an alternative way to define the comparison maps. For any smooth complex scheme \( X \) we have the following commutative squares, in which the left vertical arrows are the comparison maps \eqref{eq:naive_comparison_K} and \eqref{eq:naive_comparison_KO}, the right vertical arrows are induced by the complex realization functor \eqref{eq:complex_realization}.
\begin{align*}
 &
 \xymatrix@C=3pt{
  {\K_0(X)}\ar@{}[r]|-{\cong} \ar[d] & {\uH(\C)(X,\Z \times \Gr)} \ar[d]\\
  {\K^0(X(\C))}\ar@{}[r]|-{\cong}    & {\uH(X(\C),\Z \times \Gr)}
 }
 &&&
 \xymatrix@C=3pt{
  {\GW^0(X)}\ar@{}[r]|-{\cong} \ar[d] & {\uH(\C)(X,\Z \times \Grnd)} \ar[d]\\
  {\KO^0(X(\C))}\ar@{}[r]|-{\cong}    & {\uH(X(\C),\Z \times \Grnd)}
 }
\end{align*}
Some of the results quoted here are in fact known in a much greater generality. Firstly, higher algebraic and hermitian K-groups of \( X \) are obtained by passing to suspensions of \( X \) in \eqref{eq:K0_unstable} and \eqref{eq:GW0_unstable}. Even better, algebraic and hermitian K-theory are representable in the stable \( \A^1 \)-homotopy category \( \SH(k) \). Let us make the statement a little more precise by fixing some notation. Given a spectrum \( \mathbb{E} \) in \( \SH(k) \), we obtain a bigraded reduced cohomology theory \( \rE^{*,*} \) on \( \pH(k) \) and a corresponding unreduced theory \( E^{*,*} \) on \( \uH(k) \) by setting
\begin{align*}
 \rE^{p,q}(\mathcal{X}) &:=\SH(k)(\SgmInf\mathcal{X}, S^{p,q}\smsh\mathbb{E}) && \text{ for \( \mathcal{X}\in\pH(k) \)} \\
  E^{p,q}(X)            &:= \rE^{p,q}(X_+)                                    && \text{ for \( X\in\uH(k) \)}
\end{align*}
A spectrum \( \SPaK \) representing algebraic K-theory was first constructed in \cite{Voevodsky:ICM}*{\S~6.2}; see \cite{Riou:Thesis} or \cite{Riou:K} for some further discussion. It is (2,1)-periodic, meaning that in \( \SH(k) \) we have an isomorphism
\begin{equation*}
 S^{2,1}\smsh\SPaK\overset{\cong}\rightarrow\SPaK
\end{equation*}
Thus, the bigrading of the corresponding cohomology theory \( \K^{p,q} \) is slightly artificial. The identification with the usual notation for algebraic K-theory is given by
\begin{align}\label{eq:motivic_K-indices}
        \K^{p,q}(X) &= \K_{2q-p}(X)
\end{align}
For hermitian K-theory we have an (8,4)-periodic spectrum \( \SPaKO \), and the corresponding cohomology groups \( \Kh^{p,q} \) are honestly bigraded. The translation into the notation used for hermitian K-groups in Section~\ref{subsec:W} is given by
\begin{align}\label{eq:Representable_vs_Schlichting}
        \Kh^{p,q}(X) &= \GW^q_{2q-p}(X)
\end{align}
We will refer to the number \( 2q-p \) as the degree of the group \( \Kh^{p,q}(X) \). The relation with Balmer's Witt groups obtained by combining \eqref{eq:Representable_vs_Schlichting} and \eqref{eq:Schlichting_vs_BW} is illustrated by the following table:\vspace{-0.5\baselineskip}
\begin{center}
\newcommand{\bs}{\boldsymbol}
\begin{tabular}[t]{M|MMMMMMMMM}
\toprule
\bs{\Kh^{p,q}} & p=0        & 1         & 2          & 3        & 4           & 5         & 6          & 7     \\
\midrule
q=0            & \bs{\GW^0} & \bs{\W^1} & \bs{\W^2}  & \bs{\W^3} & \bs{\W^0}  & \bs{\W^1} & \bs{\W^2}  & \bs{\W^3}  \\
q=1            & \GW^1_2    & \GW^1_1   & \bs{\GW^1} & \bs{\W^2} & \bs{\W^3}  & \bs{\W^0} & \bs{\W^1}  & \bs{\W^2}  \\
q=2            & \GW^2_4    & \GW^2_3   & \GW^2_2    & \GW^2_1   & \bs{\GW^2} & \bs{\W^3} & \bs{\W^0}  & \bs{\W^1}  \\
q=3            & \GW^3_6    & \GW^3_5   & \GW^3_4    & \GW^3_3   & \GW^3_2    & \GW^3_1   & \bs{\GW^3} & \bs{\W^0}  \\
\bottomrule
\end{tabular}
\end{center}

\vspace{0.5\baselineskip}
As for the representing spaces in the unstable homotopy category, it is known that the complex realizations of \( \SPaKO \) and \( \SPaK \) represent real and complex topological K-theory. This is well-documented in the latter case, see for example \cite{Riou:Thesis}*{Proposition~VI.12}. For \( \SPaKO \), our references are slightly thin. Since the emphasis in this article is on showing how such a result in \( \A^1 \)-homotopy theory can be used for some concrete computations, we will at this point succumb to an ``axiomatic'' approach --- the key statements we will be using are as follows:

\begin{myAxioms}\label{myAxioms}
There exist spectra \( \SPaK \) and \( \SPaKO \) in \( \SH(\C) \) representing algebraic K-theory and hermitian K-theory in the sense described above, such that:
\begin{enumerate}[label=(\alph*)]
 \item The complex realization functor \eqref{eq:complex_realization_stable} takes \( \SPaK \) to \( \SPtK \) and \( \SPaKO \) to \( \SPtKO \).
 \item We have an exact triangle in \( \SH(\C) \) of the form
       \begin{align}\label{eq:KKOtriang}
        \SPaKO \smsh S^{1,1}\overset{\eta}{\rightarrow}\SPaKO\rightarrow\SPaK\rightarrow S^{1,0}\smsh\dots
       \end{align}
       which corresponds to the usual triangle in \( \SH \).
\end{enumerate}
\end{myAxioms}

These results are announced in \cite{Morel}. Independent constructions of spectra representing hermitian K-theory can be found in \cite{Hornbostel} and in a recent preprint of Panin and Walter \cite{PaninWalter:BO}.

\section{The comparison maps}\label{sec:comparison}
It follows immediately from \ref{myAxioms} that complex realization induces comparison maps
\begin{alignat*}{2}
        &\phantom{:=::}\tilde{k}^{p,q}\colon{}   & \rK^{p,q}(\mathcal{X})&\rightarrow\rK^p(\mathcal{X}(\C)) \\
        &\phantom{:=::}\tilde{k}_h^{p,q}\colon{} & \rKh^{p,q}(\mathcal{X})&\rightarrow\rKO^p(\mathcal{X}(\C))
\intertext{%
and hence comparison maps \( k^{p,q} \) and \( k_h^{p,q} \) on K- and hermitian K-groups. In particular, in degrees \( 0 \) and \( -1 \) we have maps}
     &\phantom{:=::}k^{0,0}\colon{}&\K_0(X)&\rightarrow \K^0(X(\C))\\
        \gw^q&:=k_h^{2q,q}\colon{}&\GW^q(X)&\rightarrow \KO^{2q}(X(\C))\\
        \w^q&:=k_h^{2q-1,q-1}\colon{}&\W^q(X)&\rightarrow \KO^{2q-1}(X(\C))
\end{alignat*}
for any smooth complex scheme \( X \). Some good properties of these maps follow directly from the construction:
\begin{itemize}
\item They commute with pullbacks along morphisms of smooth schemes.
\item They are compatible with suspension isomorphisms.
\item They are compatible with the periodicity isomorphisms, so we can identify \( k_h^{p,q} \) with \( k_h^{p+8,q+4} \) (and hence \( \w^{q} \) with \( \w^{q+4} \) and \( \gw^q \) with \( \gw^{q+4} \)).
\end{itemize}
It is also clear that they are compatible with long exact sequences arising from exact triangles in \( \SH(\C) \). This will be particularly useful in the following two cases.

\paragraph{Localization sequences.}
Given a smooth closed subscheme \( Z \) of a smooth scheme \( X \), we have an exact triangle
\begin{equation*}
        \SgmInf(X-Z)_+\rightarrow \SgmInf X_+ \rightarrow \SgmInf{\left(\tfrac{X}{X-Z}\right)}\rightarrow S^{1,0}\smsh\dots
\end{equation*}
in \( \SH(\C) \). It induces long exact ``localization sequences'' for cohomology theories. For example, for hermitian K-theory we obtain sequences of the form
\begin{equation}\label{seq:Kh-localization}
   \dots \rightarrow \rKh^{p,q}\left(\tfrac{X}{X-Z}\right) \rightarrow \Kh^{p,q}(X) \rightarrow \Kh^{p,q}(X-Z) \rightarrow \rKh^{p+1,q}\left(\tfrac{X}{X-Z}\right) \rightarrow \dots
\end{equation}
The comparison maps commute with all maps appearing in this sequence and the corresponding sequence of topological KO-groups.

The space \(X/(X-Z)\) depends only on the normal bundle \(\N\) of \(Z\) in \(X\). To make this precise, we introduce the Thom space of a vector bundle \(\vb E\) over an arbitrary smooth scheme \(Z\), defined as the homotopy quotient of \(\vb E\) by the complement of the zero section:
\begin{equation*}
 \Thom_Z(\vb E) := \quotient{\vb E}{(\vb E-Z)}
\end{equation*}
Using a geometric construction known as deformation to the normal bundle, Morel and Voevodsky show in Theorem~2.23 of \cite{MorelVoevodsky}*{Chapter~3} that \(X/(X-Z)\) is canonically isomorphic to \(\Thom_Z(\N)\) in the unstable pointed \(\A^1\)-homotopy category. Thus, sequence \eqref{seq:Kh-localization} can be rewritten in the following form:
\begin{equation*}
\begin{aligned}
   {\dots} \rightarrow
   {\rKh^{p,q}(\Thom_Z{\N})}    & \rightarrow
   {\Kh^{p,q}(X)}               \rightarrow
   {\Kh^{p,q}(X-Z)}             \rightarrow
   \qquad \\
   \qquad
   {\rKh^{p+1,q}(\Thom_Z{\N})}  & \rightarrow
   {\Kh^{p+1,q}(X)}             \rightarrow
   {\Kh^{p+1,q}(X-Z)} \rightarrow
   {\dots}
\end{aligned}
\end{equation*}

\paragraph{Karoubi/Bott sequences.}
The KO- and K-groups of a topological space \( X \) fit into a long exact sequence known as the Bott sequence \citelist{\cite{Bott:K}*{pages~75 and 112\footnote{Unfortunately, there are misprints on both pages. In particular, the central group in the diagram on page 112 should be \( \K^0 \).}}\cite{BrunerGreenlees}*{4.I.B}}.
It has the form
\begin{align}\label{seq:Bott}
 \begin{aligned}
        {\dots}&\rightarrow{\KO^{2i-1}\!{X}}\rightarrow{\KO^{2i-2}\!{X}}\rightarrow
                {\K^0\!{X}}\rightarrow{\KO^{2i}\!{X}}\rightarrow{\KO^{2i-1}\!{X}}\rightarrow
                {\K^1\!{X}\quad}\\
        &{\quad}\rightarrow{\KO^{2i+1}\!{X}}\rightarrow{\KO^{2i}\!{X}}\rightarrow
                {\K^0\!{X}}\rightarrow{\KO^{2i+2}\!{X}}\rightarrow{\KO^{2i+1}\!{X}}\rightarrow
                {\dots}
 \end{aligned}
\end{align}
The maps from KO- to K-groups are essentially given by complexification (or, depending on our choice of description of KO-groups, by forgetting the symmetric structure of a complex symmetric bundle), and the maps from K- to KO-groups are given by sending a complex vector bundle to its underlying real bundle (or to the associated hyperbolic bundle). The maps between KO-groups are given by multiplication with the generator \( \eta \) of \( \KO^{-1}(\point) \) (see \eqref{eq:KO-coefficients}).

This long exact sequence is induced by the triangle described in \ref{myAxioms}. The sequence arising from the corresponding triangle \eqref{eq:KKOtriang} in the stable \( \A^1 \)-homotopy category is known as the Karoubi sequence. The comparison maps induce a commutative ladder diagram that allows us to compare the two. Near degree zero, this takes the following form:
\begin{equation}\label{seq:Karoubi-comparison}
\xymatrix@C=11pt{
    {\dots}        \ar[r]
  & {\Kh^{2i-1,i}\!{X}} \ar[r]\ar[d]
  & {\GW^{i-1}\!{X}} \ar[r]\ar[d]
  & {\K_0\!{X}}      \ar[r]\ar[d]
  & {\GW^i\!{X}}     \ar[r]\ar[d]
  & {\W^i\!{X}}      \ar[r]\ar[d]
  & {0}            \ar[r]\ar[d]
  & {\dots}
  \\
    {\dots}        \ar[r]
  & {\KO^{2i-1}\!{X}}\ar[r]
  & {\KO^{2i-2}\!{X}}\ar[r]
  & {\K^0\!{X}}      \ar[r]
  & {\KO^{2i}\!{X}}  \ar[r]
  & {\KO^{2i-1}\!{X}}\ar[r]
  & {\K^1\!{X}} \ar[r]
  & {\dots}
  }
\end{equation}
As a consequence, the comparison maps \( \w^i \) factor as
\begin{equation*}
 \W^i(X)\rightarrow\frac{\KO^{2i}(X)}{\K^0(X)} \rightarrow \KO^{2i-1}(X)
\end{equation*}
For cellular varieties, or more generally for spaces for which the odd topological K-groups vanish, the second map in this factorization is an isomorphism.

\paragraph{Groups with restricted support.} Comparing the localization sequences \eqref{seq:GW-localization} and \eqref{seq:Kh-localization}, we see that the groups \(\rKh^{p,q}(\frac{X}{X-Z})\) play the role of hermitian K-groups of \(X\) supported on \(Z\). This should be viewed as part of any representability statement, see for example \cite{PaninWalter:BO}*{Theorem~6.5}. Alternatively, a formal identification of the groups in degrees zero and below using only the minimal assumptions we have stated could be achieved as follows:
\begin{lem}\label{lem:Kh-with-support}
Let \(Z\) be a smooth closed subvariety of a smooth quasi-projective variety \(X\).
We have the following isomorphisms:
\begin{align*}
  \rKh^{2q,q}(\tfrac{X}{X-Z})  & \cong \GW^q_Z(X)\\
  \rKh^{p,q}(\tfrac{X}{X-Z})  & \cong \W^{p-q}_Z(X)\;\text{ for \( \;2q-p<0 \)}
\end{align*}
\end{lem}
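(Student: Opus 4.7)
The plan is to compare the long exact sequence~\eqref{seq:Kh-localization} with the Balmer--Walter localization sequence~\eqref{seq:GW-localization} (extended to negative degrees via Schlichting's spectra and the isomorphisms~\eqref{eq:Schlichting_vs_BW}), and then finish with the five lemma applied at the support slot.

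First I would use the identifications~\eqref{eq:Representable_vs_Schlichting} and~\eqref{eq:Schlichting_vs_BW} to translate the non-support terms. When $2q-p=0$ we have $\Kh^{p,q}(X)\cong\GW^q(X)$, and when $2q-p<0$ we have $\Kh^{p,q}(X)\cong\W^{p-q}(X)$, with the same identifications for $X-Z$. In the range $2q-p\le 0$, the two terms of~\eqref{seq:Kh-localization} outside the support slot then agree, term by term, with the corresponding terms of~\eqref{seq:GW-localization}, and their relative shifts (coming from the internal degree $p\mapsto p+1$ on one side and the shift $\GW^q\to \W^{q+1}\to\cdots$ on the other) match up correctly.

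Second I would build a natural morphism between the two long exact sequences. Both arise from cofiber sequences of spectra: the top from the cofiber sequence $(X-Z)_+\to X_+\to X/(X-Z)$ in $\SH(\C)$, and the bottom from Schlichting's fiber sequence $\SPGW^q_Z(X)\to\SPGW^q(X)\to\SPGW^q(X-Z)$. Naturality of the representability stated in~\ref{myAxioms} yields a map of triangles between them, whose action on the non-support terms is the canonical identification of the previous paragraph. The five lemma then forces the induced map on the support term $\rKh^{p,q}(X/(X-Z))$ to be an isomorphism onto $\GW^q_Z(X)$ when $2q-p=0$ and onto $\W^{p-q}_Z(X)$ when $2q-p<0$.

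The hard part is the spectrum-level comparison in the second step: one needs that the identification of representing spectra in~\ref{myAxioms} is compatible with the natural localization triangles on each side. The quasi-projectivity hypothesis enters here, in that it permits a Jouanolou torsor over $X$ containing a Jouanolou torsor over $Z$, so that Schlichting's fiber sequence can be set up in affine form and matched with the unstable $\A^1$-homotopical cofiber sequence via homotopy invariance. Once this compatibility is in hand, the five-lemma diagram chase is routine, and the two claimed isomorphisms fall out simultaneously.
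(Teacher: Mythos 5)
Your strategy hits exactly the difficulty the paper explicitly flags and circumvents. The standing assumptions~\ref{myAxioms} only give pointwise identifications \( \Kh^{p,q}(Y)\cong\GW^q_{2q-p}(Y) \) for smooth schemes \( Y \), natural in \( Y \). They say nothing about compatibility of the representability isomorphism with the boundary maps in Schlichting's fiber sequence \( \SPGW^q_Z(X)\to\SPGW^q(X)\to\SPGW^q(X-Z) \). Your central claim, ``Naturality of the representability stated in~\ref{myAxioms} yields a map of triangles between them,'' is therefore unjustified: the motivic cofiber triangle and Schlichting's fiber sequence live in different triangulated categories, and no morphism between the resulting long exact sequences is supplied by the stated axioms. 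Without a map on the support terms, the five lemma does not apply; knowing the outer four terms agree via a commuting ladder does not determine the middle term or produce an isomorphism onto it. This is precisely why the paper, just before the lemma, notes that the naive comparison ``should be viewed as part of any representability statement, see for example [Panin--Walter, Theorem~6.5]'' — i.e.\ it requires a stronger axiom than the one adopted — and then offers an alternative argument. Your closing remark about Jouanolou torsors over \( X \) and \( Z \) does not repair this: Jouanolou's trick reduces to the affine case but still does not produce the needed comparison of boundary maps.

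The paper's proof is genuinely different and avoids the issue entirely. It replaces \( Z\hookrightarrow X \) by \( Z\times\{0\}\hookrightarrow X\times\A^1 \). Because the complement \( X\times\A^1-Z \) retracts onto \( X\times\{1\} \), \emph{both} localization sequences split into short exact sequences, and in each the support group in the relevant degree appears as the cokernel of an injection between non-support groups in degree \( +1 \). Those non-support groups are identified by \eqref{eq:Representable_vs_Schlichting} alone, and an isomorphism of injections induces an isomorphism of cokernels — no five lemma, no boundary-map comparison. One then descends from \( X\times\A^1 \) back to \( X \) using the \( S^{2,1} \)-suspension on the motivic side and Nenashev's Thom/Gysin isomorphism (plus Karoubi induction for \( \GW \)) on the algebraic side; this is where quasi-projectivity is actually used. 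If you want to rescue your approach you would need to add, as a standing assumption, that the representability is compatible with localization on both sides (as in Panin--Walter); otherwise follow the splitting trick.
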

\begin{proof}
Consider \( Z=Z\times\{0\} \) as a subvariety of \(X\times\A^1\). Its open complement \( (X\times\A^1)-Z \) contains \( X=X\times\{1\} \) as a retract. Thus, the projection from \( X\times\A^1 \) onto \( X \) induces a splitting of the localization sequences associated with \( (X\times\A^1 -Z)\hookrightarrow X\times\A^1\), and we have
\begin{align*}
\GW_Z^{i+1}(X\times\A^1)
   &\cong\coker\left(
\GW^{i+1}_1(X\times\A^1)\hookrightarrow\GW^{i+1}_1(X\times\A^1-Z)
   \right)
   \\
\rKh^{2i+2,i+1}(\tfrac{X\times\A^1}{X\times\A^1-Z})
   &\cong\coker\left(
\Kh^{2i+1,i+1}(X\times\A^1)\hookrightarrow\Kh^{2i+1,i+1}(X\times\A^1-Z)
   \right)
\end{align*}
By \eqref{eq:Representable_vs_Schlichting}, we can identify the groups appearing on the right, so we obtain an induced isomorphism of the cokernels. The quotient \(X\times\A^1/(X\times\A^1-Z)\) can be identified with the suspension of \(X/(X-Z)\) by \(S^{2,1}\), so we have an isomorphism
\begin{align*}
  \rKh^{2i+2,i+1}(\tfrac{X\times\A^1}{X\times\A^1-Z})&\cong\rKh^{2i,i}(\tfrac{X}{X-Z})
\intertext{%
On the other hand, we have analogous isomorphisms
}%
  \GW_Z^{i+1}(X\times\A^1)&\cong\GW^i_Z(X)
\end{align*}
for Grothendieck-Witt and Witt groups. For Witt groups, this is a special case of Theorem~2.5 in \cite{Nenashev:Gysin}, the case when \(Z=X\) being Theorem~8.2 in \cite{BalmerGille:Koszul}. The corresponding isomorphisms of Grothendieck-Witt groups may be deduced via Karoubi induction.
The proof in lower degrees is analogous.
\end{proof}

\subsection{Twisting by line bundles}\label{sec:twists}
As described in Section \ref{subsec:W}, there is a natural notion of Witt groups twisted by line bundles. In the homotopy theoretic approach, such a twist can be encoded by passing to the Thom space of the bundle.
\begin{defn}
   For a vector bundle \( \vb{E} \) of constant rank \( r \) over a smooth scheme \( X \), we define the hermitian K-groups of \( X \) with coefficients in \( \vb{E} \) by
   \begin{align*}
   \Kh^{p,q}(X;\vb{E}) &:= \rKh^{p+2r,q+r}(\Thom{\vb{E}})
   \intertext{Likewise, for any complex vector bundle of rank \( r \) over a topological space \( X \), we define}
   \KO^p(X;\vb{E})     &:=\rKO^{p+2r}(\Thom{\vb{E}})
   \end{align*}
\end{defn}
When \( \vb{E} \) is a trivial bundle, its Thom space is just a suspension of \( X \), so that \( \KO^{p,q}(X;\vb{E}) \) and \( \KO^{p,q}(X) \) agree.
\begin{lem}\label{lem:Kh-with-twist}
For any vector bundle \( \vb{E} \) over a smooth quasi-projective variety \( X \), we have isomorphisms
 \begin{align*}
  \Kh^{2q,q}(X;\vb{E}) &\cong \GW^q(X;\det{\vb{E}})\\
  \Kh^{p,q}(X;\vb{E})  &\cong \W^{p-q}(X;\det{\vb{E}})\;\text{ for \( \;2q-p<0 \)}
 \end{align*}
\end{lem}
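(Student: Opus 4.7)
The plan is to combine Lemma~\ref{lem:Kh-with-support} with a Thom (or d{\'e}vissage) isomorphism for hermitian K-theory. First I would identify the Thom space $\Thom \vb{E}$ with the quotient $\vb{E}/(\vb{E}-X)$, where $X$ sits inside the total space of $\vb{E}$ as the zero section. Since $X$ is smooth and quasi-projective, so is this total space, and $X$ is a smooth closed subvariety of it. Applying Lemma~\ref{lem:Kh-with-support} with parameters $(p',q')=(p+2r,\,q+r)$ and observing that $2q'-p' = 2q-p$ yields
\begin{align*}
 \Kh^{2q,q}(X;\vb{E}) &\cong \GW^{q+r}_X(\vb{E}), \\
 \Kh^{p,q}(X;\vb{E})  &\cong \W^{p+r-q}_X(\vb{E}) \text{ for } 2q-p<0.
\end{align*}

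Second, I would appeal to a Thom isomorphism for hermitian K-theory of a rank-$r$ vector bundle $\vb{E}\to X$: namely, canonical isomorphisms
\begin{align*}
 \GW^i_X(\vb{E}) &\cong \GW^{i-r}(X;\det\vb{E}), \\
 \W^i_X(\vb{E})  &\cong \W^{i-r}(X;\det\vb{E}).
\end{align*}
For Witt groups, this is an instance of Nenashev's Gysin isomorphism \cite{Nenashev:Gysin}*{Theorem~2.5}, generalising the case $Z=X$ from \cite{BalmerGille:Koszul}*{Theorem~8.2}; the corresponding statement for Grothendieck-Witt groups can be deduced by Karoubi induction along the Karoubi sequences, exactly as in the last step of the proof of Lemma~\ref{lem:Kh-with-support}. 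Substituting these shifts into the identifications from the first step produces $\GW^q(X;\det\vb{E})$ and $\W^{p-q}(X;\det\vb{E})$, as claimed.

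The main obstacle I anticipate lies squarely in this second step: one must verify both that the degree shift appearing in the Thom isomorphism equals the rank $r$ (and not, for instance, $2r$) and, more delicately, that the accompanying line bundle twist is genuinely $\det\vb{E}$ rather than its dual. This amounts to carefully tracking the Koszul duality used in d{\'e}vissage back to the standard duality twisted by $\det\vb{E}$, which is well documented in the references above but is the sort of bookkeeping where sign and convention errors are easy to make.
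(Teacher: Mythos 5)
Your proposal is correct and follows essentially the same route as the paper: first apply Lemma~\ref{lem:Kh-with-support} to the Thom space $\Thom\vb{E}=\vb{E}/(\vb{E}-X)$ with the shifted parameters $(p+2r,q+r)$, then invoke Nenashev's Thom isomorphism $\W^i(X;\det\vb{E})\cong\W^{i+r}_X(\vb{E})$ from \cite{Nenashev:Gysin}*{Theorem~2.5} and transport it to Grothendieck-Witt groups by Karoubi induction as in the last step of the proof of Lemma~\ref{lem:Kh-with-support}. The caveats you raise about the degree shift being $r$ (not $2r$) and the twist being $\det\vb{E}$ are exactly the points settled by Nenashev's construction of a canonical Thom class in $\GW^r_X(\vb{E})$, which is why the paper cites it.
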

\begin{proof}
This follows from Lemma~\ref{lem:Kh-with-support} and Nenashev's Thom isomorphisms for Witt groups: for any vector bundle \( \vb{E} \) of rank \( r \) there is a canonical Thom class in \( \W^r_X(\vb{E}) \) which induces an isomorphism \( \W^i(X;\det\vb{E})\cong\W_X^{i+r}(\vb{E}) \) by multiplication \cite{Nenashev:Gysin}*{Theorem~2.5}. This Thom class actually comes from a class in \( \GW^r_X(\vb{E}) \), and, as in the proof of Lemma~\ref{lem:Kh-with-support}, we can deduce that it induces an analogous isomorphism on Grothendieck-Witt groups via Karoubi induction.
\end{proof}
\begin{rem*}
The isomorphisms of Lemmas~\ref{lem:Kh-with-support} and \ref{lem:Kh-with-twist} are constructed here in a rather ad hoc fashion, and we have taken little care in recording their precise form. Whenever we give an argument concerning the comparison maps on ``twisted groups'' in the following, we do all constructions on the level of representable groups of Thom spaces. The identifications with the usual twisted groups are only needed to identify the final output of concrete calculations as in Section~\ref{sec:examples}.
\end{rem*}
It follows similarly from Thom isomorphisms in topology that the groups \( \KO(X;\vb{E}) \) only depend on the determinant line bundle of \( \vb{E} \):
\begin{lem}\label{lem:KOtwists}
For complex vector bundles \( \vb{E} \) and \( \vb{F} \) on a topological space \( X \) with identical first Chern class modulo 2, we have
 \begin{align*}
  \KO^p(X;\vb{E})\cong\KO^p(X;\vb{F})
 \end{align*}
\end{lem}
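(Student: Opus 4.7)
The plan is to invoke the KO-theoretic Thom isomorphism for Spin bundles. The key observation is that although neither $\vb{E}$ nor $\vb{F}$ need individually be KO-oriented, their complex Whitney sum satisfies
\[
 c_1(\vb{E}\oplus\vb{F}) = c_1(\vb{E}) + c_1(\vb{F}) \equiv 0 \pmod{2},
\]
so the underlying real bundle has $w_1=0$ and $w_2=c_1\bmod 2 =0$, i.e., it is Spin. By Atiyah--Bott--Shapiro it is therefore KO-oriented, with a Thom class in $\rKO^{2(r+s)}(\Thom(\vb{E}\oplus\vb{F}))$ inducing a Thom isomorphism $\KO^p(X)\cong\rKO^{p+2(r+s)}(\Thom(\vb{E}\oplus\vb{F}))$. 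The same argument applied to $\vb{F}\oplus\vb{F}$, whose first Chern class $2c_1(\vb{F})$ is automatically even, yields an analogous Thom isomorphism $\KO^p(X)\cong\rKO^{p+4s}(\Thom(\vb{F}\oplus\vb{F}))$.

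Using the standard identification $\Thom(\vb{V}\oplus\vb{W})\cong\Thom\vb{V}\smsh_X\Thom\vb{W}$ as fibrewise smash products, together with the external-product pairing on $\rKO^*$, I would exhibit $\rKO^*(\Thom\vb{E})$ and $\rKO^*(\Thom\vb{F})$ as modules ``inverse'' (up to a degree shift) to $\rKO^*(\Thom\vb{F})$ in the Picard category of $\KO^*(X)$-modules. Uniqueness of inverses then gives an isomorphism $\rKO^*(\Thom\vb{E})\cong\rKO^*(\Thom\vb{F})$ after a shift by $2(r-s)$, which translates precisely to the desired $\KO^p(X;\vb{E})\cong\KO^p(X;\vb{F})$.

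The hard part will be formalising the ``Picard inverse'' argument, which reduces to a Künneth-type formula $\rKO^*(\Thom\vb{V}\smsh_X\Thom\vb{W})\cong\rKO^*(\Thom\vb{V})\otimes_{\KO^*(X)}\rKO^*(\Thom\vb{W})$; this holds in the relevant special case because one of the factors becomes free of rank one over $\KO^*(X)$ thanks to the Spin Thom isomorphism. A cleaner alternative is to appeal to the general principle, built into the machinery of twisted K-theory, that the KO-cohomology of the Thom space of an orientable real vector bundle depends (up to the rank shift) only on $w_2$: since both $\vb{E}_{\mathbb R}$ and $\vb{F}_{\mathbb R}$ have $w_1 = 0$ and $w_2 = c_1\bmod 2$, and by hypothesis these second Stiefel--Whitney classes agree, the conclusion is immediate.
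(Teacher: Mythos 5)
Your starting observation is exactly the paper's: since $c_1(\vb{E}\oplus\vb{F})\equiv 0 \pmod 2$ and $c_1(\vb{E}\oplus\vb{E})\equiv 0 \pmod 2$, these sums are Spin and hence KO-oriented by Atiyah--Bott--Shapiro. The gap lies in how you propagate this. The ``Picard inverse'' argument needs the K\"unneth decomposition
\begin{equation*}
 \rKO^*(\Thom\vb{E}\smsh_X\Thom\vb{F})\;\cong\;\rKO^*(\Thom\vb{E})\otimes_{\KO^*(X)}\rKO^*(\Thom\vb{F}),
\end{equation*}
and your justification --- that ``one of the factors becomes free of rank one thanks to the Spin Thom isomorphism'' --- does not apply: the two factors here are $\rKO^*(\Thom\vb{E})$ and $\rKO^*(\Thom\vb{F})$, and \emph{neither} is claimed to be free over $\KO^*(X)$, since neither $\vb{E}$ nor $\vb{F}$ is Spin in general. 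What the Spin Thom isomorphism gives you is that $\rKO^*$ of the \emph{smash} (which equals the Thom space of $\vb{E}\oplus\vb{F}$) is free, and decomposing that free module as the tensor product of the two factors is essentially the invertibility you are trying to prove --- the argument runs in a circle. The ``cleaner alternative'' you mention at the end is likewise circular: the principle that $\rKO^*$ of a Thom space depends, up to a rank shift, only on $w_2$ \emph{is} the content of the lemma.

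The fix dispenses with K\"unneth altogether, and then your proof collapses onto the paper's. Use the Thom isomorphism in its \emph{relative} form: a KO-orientation of a bundle $\vb{G}$ over $X$ of real rank $N$ gives, for any pointed space $Y$ over $X$, an isomorphism $\rKO^q(Y)\cong\rKO^{q+N}(Y\smsh_X\Thom_X\vb{G})$ by multiplication with the Thom class. Apply this with $\vb{G}=\vb{E}\oplus\vb{F}$ and $Y=\Thom_X\vb{E}$, and again with $\vb{G}=\vb{E}\oplus\vb{E}$ and $Y=\Thom_X\vb{F}$; in both cases $Y\smsh_X\Thom_X\vb{G}=\Thom_X(\vb{E}\oplus\vb{E}\oplus\vb{F})$. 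Keeping track of degrees (with $r=\rank_\C\vb{E}$, $s=\rank_\C\vb{F}$),
\begin{equation*}
 \rKO^{p+2r}(\Thom\vb{E})\;\cong\;\rKO^{p+4r+2s}\bigl(\Thom(\vb{E}\oplus\vb{E}\oplus\vb{F})\bigr)\;\cong\;\rKO^{p+2s}(\Thom\vb{F}),
\end{equation*}
which is precisely $\KO^p(X;\vb{E})\cong\KO^p(X;\vb{E}\oplus\vb{E}\oplus\vb{F})\cong\KO^p(X;\vb{F})$. This is what the paper means when it ``views $\vb{E}\oplus\vb{E}\oplus\vb{F}$ both as a vector bundle over $\vb{E}$ and as a vector bundle over $\vb{F}$''; your choice of auxiliary Spin bundles ($\vb{E}\oplus\vb{F}$ and $\vb{F}\oplus\vb{F}$) would work just as well, producing $\vb{E}\oplus\vb{F}\oplus\vb{F}$ as the common intermediary.
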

\begin{proof}
A complex vector bundle \( \vb{E} \) whose first Chern class vanishes modulo \( 2 \) has a spin structure and is therefore oriented with respect to KO-theory \cite{ABS:Clifford}*{\S~12}. That is, we have a Thom isomorphism
\begin{align*}
 \KO^p{X}\overset{\cong}\longrightarrow\rKO^{p+2r}(\Thom{\vb{E}})
\end{align*}
Now suppose \( c_1(\vb{E})\equiv c_1(\vb{F}) \) mod \( 2 \). We may view \( {\vb{E}\oplus\vb{E}\oplus\vb{F}} \) both as a vector bundle over \( \vb{E} \) and as a vector bundle over \( \vb{F} \), and by assumption it is oriented with respect to KO-theory in both cases. Thus, both groups in the lemma can be identified with \( \KO^p(X;\vb{E}\oplus\vb{E}\oplus\vb{F}) \).
\end{proof}
\begin{rem}
In general the identifications of Lemma~\ref{lem:KOtwists} are non-canonical. Given a spin structure on a real vector bundle, the constructions in \cite{ABS:Clifford} do yield a canonical Thom class, but there may be several different spin structures on the same bundle. Still, canonical identifications exist in many cases. For example, there is a canonical spin structure on the square of any complex line bundle, yielding canonical identifications
\begin{equation*}
 \KO^p(X;\vb{L})\cong\KO^p(X;\vb{L}\otimes\vb{M}^{\otimes 2})
\end{equation*}
for any two complex line bundles \( \vb{L} \) and \( \vb{M} \) over \( X \). Moreover, as different spin structures on a spin bundle over \( X \) are classified by the singular cohomology group \( H^1(X;\Z/2) \), all spin structures arising in the context of complex cellular varieties below will be unique.
\end{rem}

\subsection{The comparison for cellular varieties}
\begin{thm}\label{thm:mainthm}
For a smooth cellular complex variety \( X \), the following comparison maps are isomorphisms:
\begin{align*}
                        \K_0(X)         &\overset{\cong}\longrightarrow \K^0(X(\C)) \\
        \gw^q   \colon{ \GW^q(X)}       &\overset{\cong}\longrightarrow \KO^{2q}(X(\C))\\
        \w^q    \colon{ \W^q(X)}        &\overset{\cong}\longrightarrow \KO^{2q-1}({X(\C)})
\end{align*}
This remains true for twisted groups (see Section~\ref{sec:twists}).
\end{thm}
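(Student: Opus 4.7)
The natural strategy is induction on the number of cells, using the localization ladder provided by the representability statements in Axiom~\ref{myAxioms} together with the five-lemma. Because the strata \( Z_k \) in the filtration need not be smooth, I would work instead with the smooth open subvarieties \( U_k := X \setminus Z_{N-k} \) for \( k = 0, \ldots, N \). Then \( U_0 = \emptyset \), \( U_N = X \), each \( U_k \) is open in \( X \) and hence smooth, and the inclusion \( U_k \hookrightarrow U_{k+1} \) realizes \( U_k \) as the complement of the smooth closed subvariety \( C_k := Z_{N-k} \setminus Z_{N-k-1} \cong \A^{n_{N-k-1}} \) inside \( U_{k+1} \). The claim is that the three comparison maps are isomorphisms for each \( U_k \).

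The base case \( U_0 = \emptyset \) is trivial. For the inductive step, I would write down the localization sequence \eqref{seq:Kh-localization} for \( U_{k+1} \) with support along \( C_k \), align it with its topological counterpart, and link them vertically by the comparison maps into a commutative ladder. By Lemma~\ref{lem:Kh-with-support} together with the Thom isomorphism of Lemma~\ref{lem:Kh-with-twist}, the support groups \( \rKh^{*,*}(U_{k+1}/U_k) \) reduce to hermitian K-groups of \( C_k \) twisted by the determinant of the normal bundle \( \N \) of \( C_k \) in \( U_{k+1} \). Since \( \Pic(\A^n) = 0 \), this determinant is trivial, and \( \A^1 \)-homotopy invariance identifies the support groups with (shifted) hermitian K-groups of a point. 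The topological side reduces identically: \( C_k(\C) \) is contractible, and \( \N(\C) \) carries an essentially unique spin structure via Lemma~\ref{lem:KOtwists}, so \( \rKO^*(\Thom \N(\C)) \) becomes a shifted \( \KO^* \) of a point. By the five-lemma, the comparison for \( U_{k+1} \) is an isomorphism provided it is so for \( U_k \) (inductive hypothesis) and for the point \( \C \) as a standalone base case.

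The remaining point-level base case can be verified directly by matching classical computations of \( \GW^q(\C) \) and \( \W^q(\C) \) against the coefficient ring \eqref{eq:KO-coefficients}; alternatively, starting from the elementary isomorphism \( \K_0(\C) \cong \Z \cong \K^0(\point) \), one propagates the isomorphism through all hermitian K-degrees by repeated five-lemma applications to the Karoubi/Bott ladder \eqref{seq:Karoubi-comparison}. The twisted generalization needs essentially no new ideas: any line bundle \( \vb{L} \) on \( X \) restricts to a trivial bundle on each affine cell \( C_k \), so the twisted localization and Thom identifications of Lemmas~\ref{lem:Kh-with-support} and~\ref{lem:Kh-with-twist} again reduce the inductive step to a point-level computation.

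The genuine obstacle is not any single step of this diagram chase but the \emph{existence} of a commutative comparison ladder between algebraic and topological localization sequences. Verifying such naturality directly (say, via the explicit Segal-style constructions alluded to in the discussion preceding this theorem) would be a substantial undertaking; the whole reason for routing the argument through \( \A^1 \)-homotopy theory is that, given Axiom~\ref{myAxioms}, the comparison maps come from a single morphism of spectra in \( \SH(\C) \), so compatibility with boundary maps and with the Karoubi/Bott triangle is automatic. Once this input is in hand, the rest of the argument is a formal induction.
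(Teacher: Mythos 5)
Your overall strategy coincides with the paper's: induction over the cells via the open filtration by complements, comparing the algebraic and topological localization sequences in a ladder, using Axiom~\ref{myAxioms} to guarantee that the ladder commutes, and reducing the support term to (the suspension of) a point. Your reduction of the Thom space via \( \Pic(\A^{n}) = 0 \) and Lemma~\ref{lem:Kh-with-twist} is a mild variant of the paper's use of Quillen--Suslin to trivialize the normal bundle outright, but both are fine.

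There is, however, a genuine gap in your inductive step, and it is precisely the one the paper goes out of its way to address. You claim: \emph{``By the five-lemma, the comparison for \( U_{k+1} \) is an isomorphism provided it is so for \( U_k \) (inductive hypothesis) and for the point.''} This is not a valid application of the five lemma. To conclude that \( k_h^{2q,q} \) is an isomorphism on \( U_k \), the ladder forces you to know that the comparison map in degree \( +1 \), namely \( k_h^{2q-1,q}\colon \Kh^{2q-1,q}(U_{k+1}) \rightarrow \KO^{2q-1}(U_{k+1}(\C)) \), is \emph{surjective}; and to conclude \( k_h^{2q+1,q} \) is an isomorphism you need the degree \( -2 \) comparison map on the support term to be \emph{injective}. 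Neither of these follows from the hypothesis that the degree-\( 0 \) and degree-\( -1 \) maps are isomorphisms. Worse, the degree-\( +1 \) map cannot simply be added to the inductive hypothesis as an isomorphism: already for a point one has \( \Kh^{1,1}(\C) \cong \C^{*} \) while \( \KO^{1}(\point) = 0 \), so it is genuinely only a surjection. This is why the paper does not prove Theorem~\ref{thm:mainthm} directly but instead proves the extended Theorem~\ref{thm:extendedversion}, whose inductive statement tracks four properties simultaneously (surjectivity in degree \( 1 \), isomorphisms in degrees \( 0 \) and \( -1 \), injectivity in degree \( -2 \)), and each of these requires a small separate verification at the point level. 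The paper's Remark~\ref{rem:old-proof} explicitly notes that if one insists on working only with the degree-\( 0 \) and degree-\( -1 \) maps, the ladder gives merely a surjection on Grothendieck--Witt groups and an injection on Witt groups, and one must then bring in the Karoubi/Bott sequences to upgrade these to isomorphisms --- an essentially different argument from the straight five-lemma chase you propose.

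You are right that the chief conceptual hurdle is the commutativity of the ladder, and your remark that Axiom~\ref{myAxioms} makes this automatic is exactly the paper's point of view. But the missing degree-\( \pm1 \)/\( -2 \) bookkeeping is not a formality: without strengthening the inductive statement (or substituting the Karoubi/Bott argument), the induction does not close.
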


As indicated in the introduction, the first isomorphism is well-known and almost self-evident, given that both \( \K_0(X) \) and \( \K^0(X(\C)) \) are free abelian of rank equal to the number of cells of \( X \).
In particular, both the algebraic group \( \K_0(\C) \) and the topological K-group \( \K^0(\point) \) are isomorphic to the integers, generated by the trivial line bundle, and the comparison map is the obvious isomorphism.

Let us begin the proof of the theorem by also considering the other two maps first in the case when \( X \) is just a point \( \mathrm{Spec}(\C) \).
We can easily see that the corresponding groups are isomorphic by direct comparison as in Table~\ref{table:point-comparison}.
\begin{table}
\renewcommand{\i}[1]{#1}
\newcommand{\bs}{\boldsymbol}
\begin{center}
\begin{tabular}{M|MMMMMMMMM}
\toprule
\bs{\Kh^{p,q}}(\C) & p=0      & 1      & 2       & 3       & 4        & 5       & 6         & 7      \\
\midrule
\minrowheight{\( \Z \)}
q=0                & \pmb{\Z} & \bs{0} & \i{0}   & \i{0}   & \i{\Z/2} &\i{0}    & \i{0}     & \i{0}  \\
q=1                & \dots    & \dots  & \bs{0}  & \bs{0}  & \i{0}    &\i{\Z/2} & \i{0}     & \i{0}  \\
q=2                & \dots    & \dots  & \dots   & \dots   & \pmb{\Z} & \bs{0}  & \i{\Z/2}  & \i{0}  \\
q=3                & \dots    & \dots  & \dots   & \dots   & \dots    & \dots   & \bs{\pmb{\Z}/2} & \bs{\pmb{\Z}/2}   \\
\midrule
\bs{\KO^p}(\point) & \pmb{\Z} & \bs{0} & \bs{0}  & \bs{0}  & \pmb{\Z} & \bs{0}  & \bs{\pmb{\Z}/2} & \bs{\pmb{\Z}/2}\\
\bottomrule
\end{tabular}
\caption{(Grothendieck-)Witt and KO-groups of a point.}
\label{table:point-comparison}
\end{center}
\end{table}
To see that the isomorphisms are given by our comparison maps, we can use the comparison of the Karoubi and Bott sequences. First, setting \( i=0 \) in Diagram~\eqref{seq:Karoubi-comparison}, we see that \( \gw^0 \) and \( \w^0 \) are isomorphisms on a point.
As \( \W^0(\C) \) is the only non-trivial Witt group of a point, it follows that \( \w^q \) is an isomorphism on a point in general, so that we have
\begin{align*}
\xymatrix{
        {\dots} \ar[r] & {\GW^{q-1}}  \ar[r] \ar[d]^{\gw^{q-1}} & {\K_0} \ar[r] \ar[d]^{\cong} & {\GW^{q}} \ar[r] \ar[d]^{\gw^q} & {\W^q} \ar[r] \ar[d]^{\cong} & {0} \ar[r] \ar[d] & {\dots} \\
        {\dots} \ar[r] & {\KO^{2q-2}} \ar[r]                    & {\K^0} \ar[r]                & {\KO^{2q}}\ar[r]                & {\KO^{2q-1}} \ar[r]          & {0} \ar[r]        & {\dots}
        }
\end{align*}
Given the periodicity of the Grothendieck-Witt groups, repeated applications of the Five Lemma now show that \( \gw^q \) is an isomorphism on a point for all values of \( q \). (This strategy of proof is known as ``Karoubi induction''.)

\medskip
We now treat the hermitian case in general. The case of algebraic/complex K-theory could be dealt with similarly, or deduced from the hermitian case using triangle~\eqref{eq:KKOtriang}. It will be helpful to consider not only the maps \( \gw^q=k_h^{2q,q} \) and \( \w^{q+1}=k_h^{2q+1,q} \) in degrees \( 0 \) and \( -1 \), respectively, but also the maps \( k_h^{2q-1,q} \) in degree \( 1 \) and the maps \( k_h^{2q+2,q} \) in degree \( -2 \). We will prove the following extended statement:
\begin{thm}\label{thm:extendedversion}
For a smooth cellular variety \( X \), the hermitian comparison maps in degrees \( 1 \), \( 0 \), \( -1 \) and \( -2 \) have the properties indicated:
\begin{align*}
        \Kh^{2q-1,q}(X) &\twoheadrightarrow \KO^{2q-1}(X(\C)) \\
        \Kh^{2q,q}(X) &\overset{\cong}{\rightarrow} \KO^{2q}(X(\C)) \\
        \Kh^{2q+1,q}(X) &\overset{\cong}{\rightarrow} \KO^{2q+1}(X(\C))\\
        \Kh^{2q+2,q}(X) &\rightarrowtail \KO^{2q+2}(X(\C))
\end{align*}
The analogous statements for twisted groups are also true.
\end{thm}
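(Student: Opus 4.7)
The plan is to proceed by induction on the number $N$ of cells of $X$, establishing all four assertions simultaneously; this simultaneous formulation is essential, since the Five Lemma applied to the localization sequence \eqref{seq:Kh-localization} needs the outer surjection/injection statements in degrees $1$ and $-2$ as flanking input in order to propagate the isomorphism statements in degrees $0$ and $-1$ through the induction. For the base case $N=1$ we have $X \cong \A^n$; by $\A^1$-invariance of hermitian K-theory and contractibility of $\C^n$, the comparison maps for $X$ agree with those for a point, which were shown above to be isomorphisms in all degrees by the Karoubi-induction argument on the Karoubi sequence.

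For the inductive step I would pick the smooth closed subvariety $Z := Z_1 \cong \A^{n_1}$. Its open complement $U := X - Z$ inherits a cellular filtration $\emptyset \subset Z_2-Z \subset \cdots \subset Z_N-Z = U$ with $N-1$ cells, so the inductive hypothesis gives all four assertions for $U$. By the Quillen--Suslin theorem the normal bundle $\N$ of $Z$ in $X$ is trivial, and combined with $\A^1$-contractibility of $Z$ this identifies $\Thom_Z(\N)$ with the motivic sphere $S^{2r,r}$, where $r = \mathrm{codim}_X Z$; its complex realization is $S^{2r}$. The suspension isomorphisms then reduce the comparison map on $\Thom_Z(\N)$ to that on a point, which is an isomorphism in every degree. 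Applying the Five Lemma, and its four-term epi/mono variants, at the positions $\Kh^{2q-1,q}(X)$, $\Kh^{2q,q}(X)$, $\Kh^{2q+1,q}(X)$ and $\Kh^{2q+2,q}(X)$ in the comparison of \eqref{seq:Kh-localization} with the analogous long exact sequence of $\KO$-groups then yields each of the four assertions for $X$ from the corresponding assertions for $U$ and $\Thom_Z(\N)$.

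The twisted case is handled by the same argument applied to reduced hermitian K-groups of Thom spaces: for a line bundle $\lb L$ on $X$, the twisted localization sequence involves $\rKh^{*,*}(\Thom_Z(\lb L|_Z \oplus \N))$, and since $\lb L|_Z \oplus \N$ is again trivial over $Z$ this Thom term reduces to a suspension of a point exactly as before. The main obstacle will be the bookkeeping at the inductive step, namely verifying that the precise collection of four adjacent degrees in the theorem statement is exactly what lets the Five Lemma close the induction, with no smaller collection sufficing. That the comparison maps commute with the boundary maps of the localization sequences is built into their construction via the complex realization of the underlying exact triangles in $\SH(\C)$, so no ad hoc compatibility needs to be verified by hand.
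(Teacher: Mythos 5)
Your overall strategy mirrors the paper's exactly: induction on the number of cells, comparing the localization sequence triangle $\SgmInf(U)_+ \to \SgmInf(X)_+ \to \SgmInf\Thom_Z(\N)$ with its topological realization, identifying the Thom term with a motivic sphere via Quillen--Suslin, and closing the induction with the Five Lemma and its epi/mono variants. The role of the degrees $1$ and $-2$ as outer terms is correctly identified, and the twisted case via $\Thom_Z(\lb L|_Z \oplus \N)$ is also handled as in the paper (Lemma~\ref{lem:ThomLocalizationTriangle}).

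However, your base case contains a genuine error. You assert that the comparison maps for a point ``were shown above to be isomorphisms in all degrees by the Karoubi-induction argument.'' This is false, and in fact cannot be true: the Karoubi induction only establishes that the degree-$0$ maps $\gw^q$ and degree-$(-1)$ maps $\w^q$ are isomorphisms on a point. The degree-$1$ map $k_h^{2q-1,q}$ is genuinely \emph{not} an isomorphism even for $\operatorname{Spec}\C$; as noted in the paper's remark, $\Kh^{1,1}(\C) \cong \C^*$ while $\KO^{1}(\point) = 0$, and more basically $\Kh^{-1,0}(\C) = \Z/2$ is not the zero group. The theorem's assertion in degree $1$ is only surjectivity, and this still needs a separate check: the target $\KO^{2q-1}(\point)$ is nonzero precisely when $q \equiv 0 \bmod 4$ (giving $\KO^{-1}(\point) = \Z/2$), and in that case surjectivity of $k_h^{-1,0}$ requires chasing the comparison between the Karoubi and Bott sequences, which Karoubi induction alone does not provide. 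Likewise degree $-2$ needs a separate argument in the one nontrivial case $q \equiv 2 \bmod 4$, relating the algebraic and topological multiplications by $\eta$. Without these checks the base case is incomplete, and since the inductive step propagates precisely these statements, the gap matters. The fix is exactly what the paper does in its ``Remaining details concerning a point'' paragraph; your inductive step and twisting arguments are otherwise sound.
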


\begin{rem}
The map in degree \( 1 \) is not an isomorphism even when \( X \) is a point. For example, it is known that \( \Kh^{-1,0}(\C)=\Z/2 \) (see \cite{KTHEORY-Karoubi}*{Example~18}), from which we may deduce via the Karoubi sequence that \( \Kh^{1,1}(\C)\cong\C^* \). In particular, \( \Kh^{1,1}(\C) \) cannot be isomorphic to \( \KO^1(\point)=0 \).

The map in degree \( -2 \) can be identified with the inclusion of the 2-torsion subgroup of \( \KO^{2q+2}(X(\C)) \) into \( \KO^{2q+2}(X(\C)) \) for any cellular variety \( X \). This follows from the theorem and the description of the KO-groups of cellular varieties given in Lemma~\ref{lem:structure_of_KO}.

In degrees less than \( -2 \), the comparison map is necessarily zero. The problem is that while \( \eta\colon{\W^{p-q}(X)\rightarrow\W^{p-q}(X)} \) is an isomorphism in all negative degrees, the topological \( \eta \) is nilpotent (\( \eta^3=0 \)).
\end{rem}

\medskip
The proof of Theorem~\ref{thm:extendedversion} will proceed by induction over the number of cells of \( X \) and occupy the remainder of this section. To begin the induction, we need to consider the case of only one cell, which immediately reduces to the case of a point by homotopy invariance. In this case, degrees \( 0 \) and \( -1 \) have already been dealt with above. In degrees \( 1 \) and \( -2 \), on the other hand, most of the statements are trivial, and we only need to look at a few particular cases, which we postpone to the end of the proof.

\textbf{Spheres.}
Assuming the theorem to be true for a point, the compatibility of the comparison maps with suspensions immediately shows that the theorem is also true for the reduced cohomology of the spheres \( (\P^{1})^{\wedge d}=S^{2d,d} \). To be precise, the following maps in degrees \( 1 \), \( 0 \), \( -1 \) and \( -2 \) have the properties indicated:
\begin{align*}\label{proof:spheres}
        \rKh^{2q-1,q}(S^{2d,d}) &\twoheadrightarrow \rKO^{2q-1}(S^{2d}) \\
        \rKh^{2q,q}(S^{2d,d}) &\overset{\cong}{\rightarrow} \rKO^{2q}(S^{2d}) \\
        \rKh^{2q+1,q}(S^{2d,d}) &\overset{\cong}{\rightarrow} \rKO^{2q+1}(S^{2d})\\
        \rKh^{2q+2,q}(S^{2d,d}) &\rightarrowtail \rKO^{2q+2}(S^{2d})
\end{align*}

\textbf{Cellular varieties.}
Now let \( X \) be a smooth cellular variety. By definition, \( X \) has a filtration by closed subvarieties \( {\emptyset=Z_0\subset Z_1\subset Z_2 \dots\subset Z_N=X} \) such that the open complement of \( Z_{k} \) in \( Z_{k+1} \) is isomorphic to \( \A^{n_k} \) for some \( n_k \). In general, the subvarieties \( Z_k \) will not be smooth. Their complements \( {U_k:=X-Z_k} \) in \( X \), however, are always smooth as they are open in \( X \). So we obtain an alternative filtration \( X=U_0\supset U_1\supset U_2 \dots \supset U_N=\emptyset \) of \( X \) by smooth open subvarieties \( U_k \). Each \( U_k \) contains a closed cell \( C_{k}\cong\A^{n_k} \) with open complement \( U_{k+1} \).

Our inductive hypothesis is that we have already proved the theorem for \( U_{k+1} \), and we now want to prove it for \( U_k \). We can use the following exact triangle in \( \SH(\C) \):
\begin{align*}
        \SgmInf(U_{k+1})_+ \rightarrow  \SgmInf(U_{k})_+ \rightarrow \SgmInf{\Thom(\N_{C_k\backslash U_k})} \rightarrow S^{1,0}\smsh\dots
\end{align*}

As \( C_k \) is a cell, the Quillen-Suslin theorem tells us that the normal bundle \( \N_{C_k\backslash U_k} \) of \( C_k \) in \( U_k \) has to be trivial. Thus, \( \Thom(\N_{C_k\backslash U_k}) \) is \( \A^1 \)-weakly equivalent to \( S^{2d,d} \), where \( d \) is the codimension of \( C_k \) in \( U_k \). Figure~\ref{fig:proof} displays the comparison between the long exact cohomology sequences induced by this triangle. The inductive step is completed by applying the Five Lemma to each dotted map in the diagram.

\begin{figure}[htbp]
\begin{equation*}
\xymatrix{
                                  \ar[d]\ar@{}[r]_-{\cdots}             & \ar[d]\\
        {\rKh^{2q-1,q}(S^{2d,d})} \ar[d]\ar@{->>}[r]                    & {\rKO^{2q-1}(S^{2d}) }\ar[d]\\
        {\Kh^{2q-1,q}(U_k)      } \ar[d]\ar@{-->}[r]^-{k_h^{2q-1,q}}    & {\KO^{2q-1}(U_k)     }\ar[d]\\
        {\Kh^{2q-1,q}(U_{k+1})  } \ar[d]\ar@{->>}[r]                    & {\KO^{2q-1}(U_{k+1}) }\ar[d]\\
        {\rKh^{2q,q}(S^{2d,d})  } \ar[d]\ar[r]^-{\cong}                 & {\rKO^{2q}(S^{2d})   }\ar[d]\\
        {\Kh^{2q,q}(U_k)        } \ar[d]\ar@{-->}[r]^{k_h^{2q,q}}       & {\KO^{2q}(U_k)       }\ar[d]\\
        {\Kh^{2q,q}(U_{k+1})    } \ar[d]\ar[r]^-{\cong}                 & {\KO^{2q}(U_{k+1})   }\ar[d]\\
        {\rKh^{2q+1,q}(S^{2d,d})} \ar[d]\ar[r]^-{\cong}                 & {\rKO^{2q+1}(S^{2d}) }\ar[d]\\
        {\Kh^{2q+1,q}(U_k)      } \ar[d]\ar@{-->}[r]^-{k_h^{2q+1,q}}    & {\KO^{2q+1}(U_k)     }\ar[d]\\
        {\Kh^{2q+1,q}(U_{k+1})  } \ar[d]\ar[r]^-{\cong}                 & {\KO^{2q+1}(U_{k+1}) }\ar[d]\\
        {\rKh^{2q+2,q}(S^{2d,d})} \ar[d]\ar@{>->}[r]                    & {\rKO^{2q+2}(S^{2d}) }\ar[d] \\
        {\Kh^{2q+2,q}(U_k)      } \ar[d]\ar@{-->}[r]^-{k_h^{2q+2,q}}    & {\KO^{2q+2}(U_k)     }\ar[d]\\
        {\Kh^{2q+2,q}(U_{k+1})  } \ar[d]\ar@{>->}[r]                    & {\KO^{2q+2}(U_{k+1}) }\ar[d]\\
                                        \ar@{}[r]^-{\cdots}             &
        }
\end{equation*}
\caption{The inductive step.}
\label{fig:proof}
\end{figure}

\textbf{The twisted case.}
To obtain the theorem in the case of coefficients in a vector bundle \( \vb{E} \) over \( X \), we replace the exact triangle above by the triangle
\begin{align*}
   \SgmInf\Thom(\lb{E}_{|U_{k+1}})\rightarrow\SgmInf\Thom(\lb{E}_{|U_k})\rightarrow\SgmInf\Thom(\vb{E}_{|C_k}\oplus\N_{C_k\backslash U_k})\rightarrow S^{1,0}\smsh\dots
\end{align*}
The existence of this exact triangle is shown in the next lemma. The Thom space on the right is again just a sphere, so we can proceed as in the untwisted case.

\begin{lem}\label{lem:ThomLocalizationTriangle}
   Given a smooth subvariety \( Z \) of a smooth variety \( X \) with complement \( U \), and given any vector bundle \( \vb{E} \) over \( X \), we have an exact triangle
   \begin{align*}
   \SgmInf\Thom(\vb{E}_{|U})\rightarrow\SgmInf\Thom\vb{E}\rightarrow\SgmInf\Thom(\vb{E}_{|Z}\oplus\N_{Z\backslash X})\rightarrow S^{1,0}\smsh\dots
   \end{align*}
\end{lem}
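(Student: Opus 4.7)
The plan is to realise the desired triangle as the octahedral triangle of cofibres associated with a composition of open immersions inside the total space of \( \vb{E} \), and then identify each cofibre via Morel--Voevodsky homotopy purity. This reduces the lemma to book-keeping of purity isomorphisms.

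I would first view \( \vb{E} \) as a smooth scheme containing \( X \) as the zero section and \( Z \) as the closed subscheme obtained from the composition \( Z\hookrightarrow X\hookrightarrow\vb{E} \). Inside \( \vb{E} \) I consider the complements of these zero sections, and since \( Z\subseteq X \) I obtain a chain of open immersions
\begin{equation*}
 (\vb{E}-X)\hookrightarrow(\vb{E}-Z)\hookrightarrow\vb{E}.
\end{equation*}
The proof then comes down to identifying the three cofibres of the corresponding maps in \( \SH(\C) \). The cofibre of the outer map is \( \Thom\vb{E} \) by the defining purity isomorphism for the zero section. The cofibre of the composition is \( \vb{E}/(\vb{E}-Z) \), which by purity applied to the closed embedding \( Z\hookrightarrow\vb{E} \) is \( \Thom_Z(\N_{Z/\vb{E}}) \); using the short exact sequence of normal bundles for \( Z\subseteq X\subseteq\vb{E} \) together with the fact that the normal bundle of the zero section is \( \vb{E} \) itself, this decomposes as \( \Thom(\vb{E}|_Z\oplus\N_{Z\backslash X}) \).

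The remaining cofibre, that of \( (\vb{E}-X)\hookrightarrow(\vb{E}-Z) \), is the point where one has to be a bit careful. The key observation is that the closed complement of \( \vb{E}-X \) inside \( \vb{E}-Z \) is precisely \( U=X-Z \), sitting inside \( \vb{E}-Z \) as the zero section of \( \vb{E}|_U \). Since \( \vb{E}-Z \) is open in \( \vb{E} \), the normal bundle of this embedding is simply \( \vb{E}|_U \), and another application of purity identifies the cofibre with \( \Thom(\vb{E}|_U) \).

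Finally I would apply the octahedral axiom to the composition of open immersions above to produce an exact triangle among the three cofibres, which by the identifications just made reads
\begin{equation*}
 \SgmInf\Thom(\vb{E}|_U)\rightarrow\SgmInf\Thom\vb{E}\rightarrow\SgmInf\Thom(\vb{E}|_Z\oplus\N_{Z\backslash X})\rightarrow S^{1,0}\smsh\cdots
\end{equation*}
as desired. The main obstacle is the middle cofibre identification; once one spots that \( (\vb{E}-X) \) is obtained from \( (\vb{E}-Z) \) by removing the zero section over \( U \), everything falls into place and the proof reduces to three invocations of Morel--Voevodsky's Theorem~2.23 from Chapter~3 of \cite{MorelVoevodsky}.
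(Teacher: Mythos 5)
Your proof is correct and follows essentially the same route as the paper: pass to the total space of \( \vb{E} \), use the chain of open immersions \( (\vb{E}-X)\subset(\vb{E}-Z)\subset\vb{E} \), identify the three quotients via Morel--Voevodsky purity as the three Thom spaces, and apply the octahedral axiom. The only blemish is a small slip of language where you call \( \vb{E}/(\vb{E}-Z) \) ``the cofibre of the composition''---the composition is the outer map \( (\vb{E}-X)\hookrightarrow\vb{E} \), whose cofibre you have already identified as \( \Thom\vb{E} \); what you mean is the cofibre of the second map \( (\vb{E}-Z)\hookrightarrow\vb{E} \)---but the intended identification and the resulting triangle are both correct.
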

\begin{proof}
From the Thom isomorphism theorem we know that the Thom space of a vector bundle over a smooth base is \( \A^1 \)-weakly equivalent to the quotient of the vector bundle by the complement of the zero section.
Consider the closed embeddings \( U\hookrightarrow(\vb{E}-Z) \), \( X\hookrightarrow\vb{E} \) and \( Z\hookrightarrow\vb{E} \). Computing the normal bundles, we obtain
\begin{align*}
   (\vb{E}-Z)/(\vb{E}-X)&\cong\Thom_U(\vb{E}_{|U})\\
   \vb{E}/(\vb{E}-X)&\cong\Thom_X{\vb{E}} \\
   \vb{E}/(\vb{E}-Z)&\cong\Thom_Z(\vb{E}_{|Z}\oplus\N_{Z\backslash X})
\end{align*}
The claim follows by passing to the stable homotopy category and applying the octahedral axiom to the composition of the embeddings \( {(\vb{E}-X)\subseteq}{(\vb{E}-Z)\subseteq\vb{E}} \).
\end{proof}

\textbf{Remaining details concerning a point.}
To finish the proof of Theorem~\ref{thm:extendedversion}, we now return to the maps of degrees \( 1 \) and \( -2 \) in the case of a point, which we skipped above. First, let us deal with degree \( 1 \). The odd KO-groups of a point are all trivial except for \( \KO^{-1} \), so \( k_h^{2q-1,q} \) is trivially a surjection unless \( q\equiv 0 \mod 4 \). In that case, surjectivity of  \( k_h^{-1,0} \) is clear from the following diagram:
\begin{align*}
\xymatrix{
        {\dots}\ar[r] & {\Kh^{-1,0}} \ar[r] \ar[d]^{k_h^{-1,0}} & {\GW^{-1}}\ar[r]\ar[d]^{\cong} & {\K_0}\ar[r]\ar[d]^{\cong} & {\dots} \\
        {\dots}\ar[r] & {\KO^{-1}}    \ar[r] \ar@{}[drr]|{=}     & {\KO^{-2}}\ar[r]               & {\K^0}\ar[r]               & {\dots}
\\
        {\dots}\ar[r] & {\Kh^{-1,0}}\ar@{->>}[r] \ar[d]^{k_h^{-1,0}} & {\Z/2}\ar[r]^-{0} \ar[d]^{\cong} & {\Z} \ar[r] \ar[d]^{\cong} & {\dots} \\
        {\dots}\ar[r] & {\Z/2}       \ar[r]^{\cong}                   & {\Z/2}\ar[r]^-{0}                & {\Z} \ar[r]                & {\dots}
}
\end{align*}
Lastly, we consider what happens in degree \( -2 \). Again, three out of four cases are trivial as \( \Kh^{2q+2,q}=\W^{q+2} \) is zero unless \( q\equiv 2 \mod 4 \). For the non-trivial case, consider the map \( \eta \) appearing in triangle~\eqref{eq:KKOtriang}. As the negative algebraic K-groups of \( \C \) are zero, \( \eta \) yields automorphisms of \( \W^{p-q} \) in negative degrees. In topology, the corresponding maps are given by multiplication with a generator \( \eta \) of \( \KO^{-1} \), and \( \eta^2 \) generates \( \KO^{-2} \). So the commutative square
\begin{align*}
\xymatrix{
        {\W^0}\ar[r]^{\cong}\ar[d]^{\cong} & {\W^0} \ar[d]^{k_h^{0,-2}}\\
        {\KO^{-1}}\ar[r]^-{\eta}_-{\cong}        & {\KO^{-2}}
        }
\end{align*}
shows that \( k_h^{0,-2} \) is an injection (in fact, an isomorphism), as claimed. This completes the proof of Theorem~\ref{thm:extendedversion}.
\bigskip
\begin{rem}\label{rem:old-proof}
We indicate briefly how Theorem~\ref{thm:mainthm} can alternatively be obtained by working only with the maps in degrees \( 0 \) and \( -1 \) that can be defined by more elementary means. The basic strategy ---~comparing the localization sequences arising from the inclusion of a closed cell \( C_k \) into the union of ``higher'' cells \( U_{k} \)~--- still works. But we cannot deduce that the comparison maps are isomorphisms on \( U_k \) from the fact that they are isomorphisms on \( U_{k+1} \) because the parts of the sequences that we can actually compare are now too short. We can, however, still deduce that the maps in degree \( 0 \) with domains the Grothendieck-Witt groups of \( U_k \) are surjective, and that the maps in degree \( -1 \) with domains the Witt groups of \( U_k \) are injective. The inductive step can then be completed with the help of the Bott/Karoubi sequences. This argument works even without assuming that the comparison maps are compatible with the boundary maps in localization sequences in general: in the relevant cases the cohomology groups involved are so simple that this property can be checked by hand.
\end{rem}

\section{The Atiyah-Hirzebruch spectral sequence}\label{sec:AHSS}
We now aim to prepare the ground for the discussion of the KO-theory of some examples in the next section.
The main computational tool will be the Atiyah-Hirzebruch spectral sequence, which in topology exists for any generalized cohomology theory and any finite-dimensional CW complex \( X \) \citelist{\cite{Adams}*{III.7}\cite{Kochman}*{Theorem~4.2.7}}.
For KO-theory, it has the form
\begin{equation*}
 E^{p,q}_2=H^p(X;\KO^q(\point))\Rightarrow \KO^{p+q}(X)
\end{equation*}
with differential \( d_r \) of bidegree \( (r,-r+1) \).
The \( E_2 \)-page is thus concentrated in the half-plane \( p\geq 0 \) and \( 8 \)-periodic in \( q \):
we have the integral cohomology of \( X \) in rows \( q\equiv 0 \) and \( q\equiv-4 \) mod~\( 8 \), its cohomology with \( \Z/2 \)-coefficients in rows \( q\equiv -1 \) and \( q\equiv -2 \), and all other rows are zero.
The differential \( d_2 \) is given by \( \Sq^2\circ\pi_2 \) and \( \Sq^2 \) on rows \( q\equiv 0 \) and \( q\equiv-1 \), respectively, where
\begin{equation*}
 \Sq^2\colon{H^*(X;\Z/2)\rightarrow H^{*+2}(X;\Z/2)}
\end{equation*}
is the second Steenrod square and \( \pi_2 \) is mod-\( 2 \) reduction \cite{Fujii:P}*{1.3}.

The spectral sequence is multiplicative \cite{Kochman}*{Proposition~4.2.9}. That is, the multiplication on the \( E_2 \)-page induced by the cup product on singular cohomology and the ring structure of \( \KO^*(\point) \) (see~\eqref{eq:KO-coefficients}) descends to a multiplication on all subsequent pages, such that the multiplication on the \( E_{\infty} \)-page is compatible with the multiplication on \( \KO^*(X) \). In particular, each page is a module over \( \KO^*(\point) \). The differentials of the spectral sequence are derivations, \ie they satisfy a Leibniz rule.

\subsection{The Atiyah-Hirzebruch spectral sequence for cellular varieties}\label{sec:AHSS:cellular}
For cellular varieties, or more generally for CW complexes with only even-dimensional cells, the spectral sequence becomes simple enough to make some general deductions. We summarize some lemmas of Hoggar and Kono and Hara.
\begin{lem}{\cite{Hoggar}*{2.1 and 2.2}}\label{lem:structure_of_KO}
Let \( X \) be a CW complex with only even-dimensional cells. Then:
\begin{itemize}
\item The ranks of the free parts of \( \KO^0\!{X} \) and \( \KO^{4}\!{X} \) are equal to the number \( t_0 \) of cells of \( X \) of dimension a multiple of \( 4 \).
\item The ranks of the free parts of \( \KO^2\!{X} \) and \( \KO^{6}\!{X} \) are equal to the number \( t_1 \) of cells of \( X \) of dimension \( 2 \) modulo \( 4 \).
\item The groups of odd degrees are two-torsion, \ie \( \KO^{2i-1}\!{X} = (\Z/2)^{s_i} \) for some \( s_i \).
\item \( \KO^{2i}\!X \) is isomorphic to the direct sum of its free part and \( \KO^{2i+1}\!{X} \).
\end{itemize}
Table~\ref{table:eg:notation} in Section~\ref{sec:eg:notation} summarizes these statements.
\end{lem}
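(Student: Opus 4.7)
I would combine the Atiyah--Hirzebruch spectral sequence for \( \KO \)-theory with the Bott sequence~\eqref{seq:Bott}, exploiting throughout that \( X \) has only even cells, so that \( H^p(X;\Z) \) is free and concentrated in even degrees and \( H^p(X;\Z/2)=H^p(X;\Z)\otimes\Z/2 \). The same even-cell hypothesis applied to the \( \K \)-theory AHSS (whose coefficients live only in even rows) makes it collapse for purely parity reasons, yielding \( \K^1(X)=0 \) and \( \K^0(X) \) free of rank \( t_0+t_1 \); this input will be used below.

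\textbf{Free ranks.} The key observation for the \( \KO \)-AHSS is a parity/divisibility obstruction on \( d_r\colon E_r^{p,q}\to E_r^{p+r,q-r+1} \). Non-zero columns have \( p \) even, so \( r \) must be even for the target column to be non-zero. Landing in one integer row (\( q\equiv 0 \) or \( -4\pmod 8 \)) from another requires \( -r+1\equiv 0\pmod 4 \), i.e.\ \( r\equiv 1\pmod 4 \), which contradicts \( r \) even. Hence no differential connects two integer rows; differentials from \( \Z/2 \)-rows into an integer row have torsion image, so passing to the quotient preserves the free rank. Therefore the free rank of \( E_\infty^{p,q} \) equals \( \rank H^p(X;\Z) \) whenever \( q\equiv 0,-4\pmod 8 \), and summing along \( p+q=2i \) yields \( t_0 \) for \( 2i\equiv 0\pmod 4 \) and \( t_1 \) for \( 2i\equiv 2\pmod 4 \).

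\textbf{Odd torsion and the splitting.} From~\eqref{seq:Bott} we extract
\[
  \KO^{2i}(X)\xrightarrow{\;\eta\;}\KO^{2i-1}(X)\to\K^1(X)=0,
\]
so \( \eta \) is surjective and, since \( 2\eta=0 \), \( \KO^{2i-1}(X) \) is \( 2 \)-torsion (third bullet). Shifting one step along the same sequence yields
\[
  0=\K^1(X)\to\KO^{2i+1}(X)\xrightarrow{\;\eta\;}\KO^{2i}(X)\xrightarrow{\;c\;}\K^0(X),
\]
forcing \( \eta \) to be injective with \( \mathrm{image}(\eta)=\ker(c) \). Because \( \K^0(X) \) is free abelian, \( \ker(c) \) coincides with \( \mathrm{tors}\,\KO^{2i}(X) \); thus \( \eta \) identifies \( \KO^{2i+1}(X) \) with the torsion subgroup of \( \KO^{2i}(X) \), and the standard splitting of a finitely generated abelian group yields the final bullet. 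The main subtlety lies in the rank book-keeping on the \( E_\infty \)-page --- one must exclude hidden extensions that could swallow free generators or hide extra torsion --- and the Bott-sequence identification of the torsion provides a reassuring independent check.
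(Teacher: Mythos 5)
Your proof is correct and follows essentially the same route as the paper: the rank statements come from the KO-theory Atiyah--Hirzebruch spectral sequence by a degree-parity argument (the paper phrases this as ``tensor with $\Q$,'' which kills the torsion rows and makes the degeneration immediate; your integral version, tracking ranks across subquotients, is equivalent), and the torsion/splitting statements come from feeding $\K^0(X)$ free and $\K^1(X)=0$ — themselves obtained from the parity-collapse of the $\K$-theory AHSS — into the Bott sequence. Your write-up is somewhat more explicit than the paper's three-sentence proof, but no new idea is introduced and nothing is missing.
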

\begin{proof}
The cohomology of \( X \) is free on generators given by the cells and concentrated in even degrees.
The first two statements thus follow easily from the Atiyah-Hirzebruch spectral sequence for KO-theory (\eg after tensoring with \( \Q \)). On the other hand, we see from the Atiyah-Hirzebruch spectral sequence for complex K-theory that \( \K^0(X) \) is a free abelian group on the cells while \( \K^1(X) \) is zero. The last two statements thus become consequences of the Bott sequence  \eqref{seq:Bott}.
\end{proof}

The free part of \( \KO^* \) is thus very simple.
In good cases, the spectral sequence also provides a nice description of the \( 2 \)-torsion.
To see this, note that \( \Sq^2\Sq^2=\Sq^3\Sq^1 \) must vanish when the cohomology of \( X \) with \( \Z/2 \)-coefficients is concentrated in even degrees. So we can view \( (H^*(X;\Z/2),\Sq^2) \) as a differential graded algebra over \( \Z/2 \). To lighten the notation, we will write
\begin{equation*}
 H^*(X,\Sq^2):=H^*(H^*(X;\Z/2),\Sq^2)
\end{equation*}
for the cohomology of this algebra. We keep the same grading as before, so that it is concentrated in even degrees. The row \( q\equiv -1 \) on the \( E_3 \)-page is given by \( H^*(X,\Sq^2)\cdot\eta \), where \( \eta \) is the generator of \( \KO^{-1}(\point) \). Since it is the only row that contributes to \( \KO^* \) in odd degrees, we arrive at the following lemma, which will be central to our computations.
\begin{lem}\label{lem:2-torsion_of_KO}
 Let \( X \) be as above.
 If the Atiyah-Hirzebruch spectral sequence of \( \KO^*(X) \) degenerates on the \( E_3 \)-page, then
 \begin{equation*}
  \KO^{2i-1}(X)\cong\bigoplus_k H^{2i+8k}(X,\Sq^2)
 \end{equation*}
\end{lem}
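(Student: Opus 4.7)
The plan is to track which entries on the $E_r$-page can contribute to $\KO^{2i-1}(X)$, compute them on $E_3$, and then check that the extension problem is trivial.

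First I would locate the contributing entries on $E_2$. A class in total degree $n = 2i-1$ sits in some $E_\infty^{p, 2i-1-p}$. Since $X$ has only even-dimensional cells, $H^p(X;-) = 0$ whenever $p$ is odd, so only even $p$ contribute; this forces $2i-1-p$ odd. Among the four non-zero rows of $\KO^*(\mathrm{point})$ listed in Section~\ref{sec:AHSS}, the only odd row is $q \equiv -1 \mod 8$. Hence the only possibly non-zero contributions are
\begin{equation*}
 E_\infty^{2i+8k,\,-1-8k}\,, \qquad k \in \Z\,,
\end{equation*}
with $E_2$-values $H^{2i+8k}(X;\Z/2)$.

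Next I would identify these groups on the $E_3$-page. The outgoing differential is $d_2 = \Sq^2\colon H^{2i+8k}(X;\Z/2) \to H^{2i+8k+2}(X;\Z/2)$. The incoming differential originates from $E_2^{2i+8k-2,\,-8k} = H^{2i+8k-2}(X;\Z)$, the row $q\equiv 0 \bmod 8$, and is $\Sq^2\circ\pi_2$. Because $X$ has only even-dimensional cells, $H^*(X;\Z)$ is free, so $\pi_2$ is surjective and the image of $\Sq^2\circ\pi_2$ coincides with the image of $\Sq^2\colon H^{2i+8k-2}(X;\Z/2) \to H^{2i+8k}(X;\Z/2)$. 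Therefore
\begin{equation*}
 E_3^{2i+8k,\,-1-8k} \;=\; \frac{\ker\!\big(\Sq^2\colon H^{2i+8k}(X;\Z/2)\to H^{2i+8k+2}(X;\Z/2)\big)}{\im\!\big(\Sq^2\colon H^{2i+8k-2}(X;\Z/2)\to H^{2i+8k}(X;\Z/2)\big)} \;=\; H^{2i+8k}(X,\Sq^2).
\end{equation*}

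Finally I would conclude using the degeneration hypothesis. Since $E_\infty = E_3$, the Atiyah--Hirzebruch filtration on $\KO^{2i-1}(X)$ has associated graded $\bigoplus_k H^{2i+8k}(X,\Sq^2)$. Each filtration quotient is an $\F_2$-vector space, so the filtered abelian group $\KO^{2i-1}(X)$ is itself $2$-torsion (which is also consistent with Lemma~\ref{lem:structure_of_KO}). A filtered $\F_2$-vector space is canonically isomorphic, as an abelian group, to its associated graded, since every short exact sequence of $\F_2$-modules splits. This yields the stated isomorphism.

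The only subtle step is the identification $E_3^{p,-1-8k} = H^p(X,\Sq^2)$, which hinges on the fact that integral cohomology of $X$ is torsion-free so that the incoming differential $\Sq^2\circ\pi_2$ has the same image as $\Sq^2$ applied to mod-$2$ cohomology; everything else is bookkeeping with degrees and the trivial observation that extensions of $\F_2$-vector spaces split.
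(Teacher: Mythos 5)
Your identification of the contributing $E_\infty$-entries and the computation of the $E_3$-page are correct, and in fact supply a detail the paper leaves implicit: the image of the incoming differential $\Sq^2\circ\pi_2$ agrees with the image of $\Sq^2$ because $H^*(X;\Z)$ is free, so $\pi_2$ is surjective. This matches the approach of the paper, which presents this lemma essentially without a separate proof, relying on the preceding discussion (that the row $q\equiv -1$ on $E_3$ is $H^*(X,\Sq^2)\cdot\eta$ and is the only row contributing in odd total degree).

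There is, however, a genuine flaw in how you dispose of the extension problem. You write that because each filtration quotient is an $\F_2$-vector space, ``the filtered abelian group $\KO^{2i-1}(X)$ is itself $2$-torsion.'' That inference is false: $\Z/4$ is filtered by $0\subset 2\Z/4\subset\Z/4$ with both quotients isomorphic to $\Z/2$, yet it is not $2$-torsion. Knowing the graded pieces are killed by $2$ only tells you the group is killed by $2^N$ for $N$ the filtration length. What actually kills the extension problem is Lemma~\ref{lem:structure_of_KO}, proved earlier via the K-theory Atiyah--Hirzebruch spectral sequence and the Bott sequence, which asserts outright that $\KO^{2i-1}(X)\cong(\Z/2)^{s_i}$. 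You cite that lemma only parenthetically as ``consistent with'' your claim, but it is the essential input, not corroboration: once you know $\KO^{2i-1}(X)$ is an $\F_2$-vector space, its dimension is forced to equal $\sum_k\dim_{\F_2}H^{2i+8k}(X,\Sq^2)$ and the isomorphism follows. Promote that citation from a parenthetical remark to the load-bearing step and the argument is complete.
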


In all the examples we consider below, the spectral sequence does indeed degenerate at this stage. However, showing that it does can be tricky. One step in the right direction is the following observation of Kono and Hara \cite{KonoHara:Gr}*{Proposition~1}.

\begin{lem}\label{lem:AHSS_first-differential}
 Let \( X \) be as above. If the differentials \( d_3 \), \( d_4 \), \( \dots \), \( d_{r-1} \) are trivial and \( d_r \) is non-trivial, then \( r\equiv 2\mod 8 \). In other words, the first non-trivial differential after \( d_2 \) can only appear on a page \( E_r \) with page number \( r\equiv 2\mod 8 \).

 Such a differential is non-zero only on rows \( q\equiv 0 \) and \( q\equiv -1 \) mod~\( 8 \). If it is non-zero on some \( x \) in row \( q\equiv 0 \), then it is also non-zero on \( \eta x \) in row \( q\equiv -1 \).
 Conversely, if it is non-zero on some \( y \) in row \( q\equiv -1 \), there exists some \( x \) in row \( q\equiv 0 \) such that \( y=x\eta \) and \( d_r \) is non-zero on \( x \).
\end{lem}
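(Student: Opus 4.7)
The plan is to combine two ingredients: a parity constraint from the assumption that all cells of $X$ are even-dimensional, and the multiplicative structure of the spectral sequence over $\KO^*(\point)$, exploited through the class $\eta$. Since $\eta$ lies in filtration zero of the AHSS for a point, naturality shows $\eta$ is a permanent cycle for every $X$, so each $d_r$ is $\eta$-linear: $d_r(\eta\cdot{-})=\eta\cdot d_r({-})$.

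First I would note that $H^*(X;\Z)$ and $H^*(X;\Z/2)$ are concentrated in even degrees, so each $E_r$-page sits in even columns $p$ and any $d_r$ with $r$ odd vanishes automatically. Combined with the fact that the non-zero rows lie in $q\equiv 0,-1,-2,-4\mod 8$, this narrows the possible $r$ for a non-trivial differential between non-zero rows to $r\equiv 0,2,4,6\mod 8$, each value being tied to a small set of admissible source--target row pairs modulo $8$. Since $d_3,\dots,d_{r-1}$ are assumed trivial, $E_r=E_3$, and the $\eta$-action can be computed directly on $E_3$: from row $0$ to row $-1$ it is the surjection $\ker(\Sq^2\pi_2)\twoheadrightarrow H^*(X,\Sq^2)$ induced by mod-$2$ reduction (surjective because $H^*(X;\Z)$ is torsion-free, which holds since only even-dimensional cells appear); from row $-1$ to row $-2$ it is the injection $H^*(X,\Sq^2)\hookrightarrow H^*(X;\Z/2)/\Sq^2$; and from rows $-2$ and $-4$ to the next row it is zero because the target row vanishes.

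With these properties in hand, ruling out $r\equiv 0,4,6\mod 8$ is a short case analysis. For example, if $r\equiv 4\mod 8$ the only non-zero source row is $-1$, so any $y$ there can be written $y=\eta x$, and $d_r(y)=\eta d_r(x)=0$ because $d_r(x)$ lands in row $1-r\equiv -3\mod 8$, which is zero. If $r\equiv 0\mod 8$ the sources to handle are row $-1$ (same $\eta$-surjectivity trick, target of $d_r(x)$ is row $1-r\equiv 1\mod 8$, zero) and row $-2$ (use $\eta w=0$ to conclude $\eta d_r(w)=0$, then invoke injectivity of $\eta$ on row $-1$ to force $d_r(w)=0$). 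The case $r\equiv 6\mod 8$, source row $-4$, is identical to the row $-2$ case. This forces $r\equiv 2\mod 8$; a glance at the admissible source--target pairs for that residue yields exactly $(0,-1)$ and $(-1,-2)$, establishing the second assertion.

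The last two statements follow directly from $\eta$-linearity together with the surjectivity and injectivity of $\eta$ on $E_r$. If $x$ is in row $0$ then $d_r(x)$ lies in row $-1\mod 8$ where $\eta$ is injective, so $d_r(\eta x)=\eta d_r(x)$ is non-zero precisely when $d_r(x)$ is. Conversely, given $y$ in row $-1$, the $\eta$-surjectivity lets us write $y=\eta x$, and $d_r(y)=\eta d_r(x)\neq 0$ forces $d_r(x)\neq 0$. I expect the main obstacle to be the careful verification of the surjectivity and injectivity of $\eta$ on $E_3$, which rests on torsion-freeness of $H^*(X;\Z)$ and on tracing through the subquotients left by $d_2$; after that, the rest is bookkeeping modulo $8$.
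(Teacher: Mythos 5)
Your proof is correct and follows the same approach as the paper: show that \( \eta \) is a permanent cycle by comparison with the AHSS of a point, deduce that the differentials commute with multiplication by \( \eta \), and then use the surjectivity of \( \eta\cdot \) from row \( q\equiv 0 \) to row \( q\equiv -1 \) and its injectivity from row \( q\equiv -1 \) to row \( q\equiv -2 \) on the \( E_3 \)-page. The paper compresses the remaining bookkeeping into the phrase ``this implies all statements above''; your explicit case analysis modulo \( 8 \) is exactly what is being elided there.
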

\begin{proof}
We see from the spectral sequence of a point that \( d_r\eta=0 \) for all differentials.
Thus, multiplication by \( \eta \) gives a map of bidegree \( (0,-1) \) on the spectral sequence that commutes with the differentials.
On the \( E_2 \)-page this map is mod-\( 2 \) reduction from row \( q\equiv 0 \) to row \( q\equiv -1 \) and the identity between rows \( q\equiv -1 \) and \( q\equiv -2 \). It follows that on the \( E_3 \)-page multiplication by \( \eta \) induces a surjection from row \( q\equiv 0 \) to row \( q\equiv -1 \) and an injection of row \( q\equiv -1 \) into row \( q\equiv -2 \). This implies all statements above.
\end{proof}

We derive a corollary that we will use to deduce that the spectral sequence collapses for certain Thom spaces:
\begin{cor}\label{cor:Xcollaps-Tcollapse}\label{sec:AHSS:Thom}
 Suppose we have a continuous map \( p\colon{X\rightarrow T} \) of CW complexes with only even-dimensional cells. Suppose further that the Atiyah-Hirzebruch spectral sequence for \( \KO^*(X) \) collapses on the \( E_3 \)-page, and that \( p^* \) induces an injection in row \( q\equiv-1 \):
\begin{equation*}
 p^*\colon{H^*(T,\Sq^2)\hookrightarrow H^*(X,\Sq^2)}
\end{equation*}
 Then the spectral sequence for \( \KO^*(T) \) also collapses at this stage.
\end{cor}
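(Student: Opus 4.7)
The plan is to prove by induction on $r \geq 3$ that every differential $d_r^T$ in the Atiyah--Hirzebruch spectral sequence for $\KO^*(T)$ vanishes. I would combine Lemma~\ref{lem:AHSS_first-differential} with naturality of the spectral sequence along $p\colon X \to T$ and the assumed collapse of the sequence for $X$ at $E_3$.

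The base cases $r \in \{3,\ldots,9\}$ are immediate from Lemma~\ref{lem:AHSS_first-differential}, which forces the first non-trivial differential after $d_2$ to live on a page with $r \equiv 2 \pmod{8}$. For the inductive step with $r \equiv 2 \pmod 8$, the inductive hypothesis gives $E_r^{*,*}(T) = E_3^{*,*}(T)$. By the same lemma, $d_r^T$ can only be non-trivial on rows $q \equiv 0$ and $q \equiv -1 \pmod 8$, and its vanishing on row $q \equiv 0$ already forces its vanishing on row $q \equiv -1$: this is the contrapositive of the statement that any non-zero $d_r^T y$ in row $-1$ must come from a class of the form $y = \eta x$ with $d_r^T x \neq 0$ in row $0$. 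So it suffices to verify vanishing of $d_r^T$ on row $q \equiv 0$.

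The pay-off of routing through row $q \equiv 0$ is that the target of $d_r^T$ then sits in row $q - r + 1 \equiv -1 \pmod 8$, which on $E_3$ is given by $H^*(T,\Sq^2)$ multiplied by the relevant coefficient generator --- exactly the description for which our hypothesis supplies injectivity of $p^*$. For any $x$ in row $q \equiv 0$, naturality yields
\[ p^*(d_r^T x) = d_r^X(p^* x) = 0, \]
since the spectral sequence for $X$ collapses at $E_3$; by multiplicativity, $p^*$ on the target row is just the pullback $H^*(T, \Sq^2) \to H^*(X, \Sq^2)$ tensored with the identity on coefficients, hence injective. Therefore $d_r^T x = 0$, completing the induction.

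The essential point --- and the one piece of bookkeeping to get right --- is the choice to verify vanishing on row $q \equiv 0$ rather than row $q \equiv -1$. From row $-1$, the target of $d_r^T$ would land in row $\equiv -2 \pmod 8$, whose $E_3$-entry is the coarser quotient $H^*(T;\Z/2)/\operatorname{im}\Sq^2$ on which the injectivity hypothesis does not obviously descend. The $\eta$-linkage part of Lemma~\ref{lem:AHSS_first-differential} is precisely what licenses the reduction to the more convenient row.
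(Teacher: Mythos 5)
Your proposal is correct and follows the same strategy as the paper's proof: identify the first potentially non-trivial higher differential $d_r$ (with $r\equiv 2\bmod 8$ by Lemma~\ref{lem:AHSS_first-differential}), show it vanishes on row $q\equiv 0$ by naturality along $p$ together with injectivity of $p^*$ on $H^*(-,\Sq^2)$, and use the $\eta$-linkage part of the lemma to conclude that it vanishes everywhere. The paper phrases this as an argument about a hypothetical first non-trivial $d_r$ rather than a full induction, but the content and the crucial bookkeeping (working from row $q\equiv 0$ so that the target lands in the row where the injectivity hypothesis bites) are identical.
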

\begin{proof}
Write \( d_r \) for the first non-trivial higher differential, so \( r\equiv2\mod 8 \). Then, for any element \( x \) in row \( q\equiv 0 \), we have \( p^*(d_rx)=d_rp^*(x)=0 \) since the spectral sequence for \( X \) collapses. From our assumption on \( p^* \) we can deduce that \( d_rx=0 \). By the preceding lemma, this is all we need to show.
\end{proof}

\subsection{The Atiyah-Hirzebruch spectral sequence for Thom spaces}
In order to compute twisted KO-groups, we need to apply the Atiyah-Hirzebruch spectral sequence of KO-theory to Thom spaces. So let \( X \) be a finite-dimensional CW complex, and let \( \pi\colon\vb{E}\rightarrow X \) be a vector bundle of constant rank over \( X \).
Though we will be mainly interested in the case when \( \vb{E}\) is complex, we may more generally assume here that \( \vb{E} \) is any real vector bundle which is oriented.
Then the Thom isomorphism for singular cohomology tells us that the reduced cohomology of the Thom space \( \Thom{\vb E} \) is additively isomorphic to the cohomology of \( X \) itself, apart from a shift in degrees by \(r:=\rank_{\R}\vb{E} \). The isomorphism is given by multiplication with a Thom class \(\theta\) in \(\rH^{r}(\Thom{\vb E};\Z)\):
\begin{equation*}
\begin{aligned}
 H^*(X;\Z) &\xrightarrow{\cong} \rH^{*+r}(\Thom\vb E;\Z)\\
     x\quad&\mapsto \quad\pi^*(x)\cdot\theta
\end{aligned}
\end{equation*}
Similarly, the reduction of \(\theta\) modulo two induces an isomorphism of the respective singular cohomology groups with \(\Z/2\)-coefficients. Thus, apart from a shift of columns, the entries on the \( E_2 \)-page of the spectral sequence for \( \rKO^*(\Thom{\vb{E}}) \) are identical to those on the \( E_2 \)-page for \( \KO^*(X) \). However, the differentials may differ.
\begin{lem}\label{lem:Sq-on-Thom}
 Let \( \vb{E}\overset{\pi}{\rightarrow}X \) be a complex vector bundle of constant rank over a topological space \( X \), with Thom class \( \theta \) as above.
 The second Steenrod square on \( \rH^*(\Thom\vb E;\Z/2) \) is given by \mbox{``\,\( {\Sq^2}+ c_1(\vb{E}) \)''}, where \( c_1(\vb{E}) \) is the first Chern class of \( \vb{E} \) modulo two. That is,
 \begin{equation*}
  \Sq^2(\pi^*x\cdot\theta)=\pi^*\left(\Sq^2(x)+c_1(\vb{E})x\right)\cdot\theta
 \end{equation*}
 for any \( x\in H^*(X;\Z/2) \). More generally, if \(\vb{E}\) is a real oriented vector bundle, the second Steenrod square on the cohomology of its Thom space is given by \mbox{``\,\( {\Sq^2}+ w_2(\vb{E}) \)''}, where \( w_2 \) is the second Stiefel-Whitney class of \(\vb{E}\).
\end{lem}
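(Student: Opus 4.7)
The plan is to reduce the statement to the combination of two classical facts: the Cartan formula for Steenrod squares and the Wu formula for their action on the Thom class. I will treat the general real oriented case first, since the complex case then follows by the standard identity $w_2(\vb{E}) \equiv c_1(\vb{E}) \bmod 2$.

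First I would apply the Cartan formula to $\pi^* x \cdot \theta$, which gives
\begin{equation*}
  \Sq^2(\pi^*x \cdot \theta) = \Sq^2(\pi^*x)\cdot\theta + \Sq^1(\pi^*x)\cdot \Sq^1(\theta) + \pi^*x \cdot \Sq^2(\theta).
\end{equation*}
Then I would invoke the Wu formula, which states that for any real vector bundle with Thom class $\theta$ one has $\Sq^i(\theta) = \pi^*(w_i(\vb{E}))\cdot\theta$. Orientability of $\vb{E}$ gives $w_1(\vb{E}) = 0$, so $\Sq^1(\theta) = 0$ and the middle term drops out. Since $\pi^*$ commutes with Steenrod squares, $\Sq^2(\pi^* x) = \pi^*(\Sq^2 x)$, and therefore
\begin{equation*}
  \Sq^2(\pi^*x \cdot \theta) = \pi^*(\Sq^2 x)\cdot\theta + \pi^*(w_2(\vb{E}) x)\cdot\theta = \pi^*\bigl(\Sq^2 x + w_2(\vb{E}) x\bigr)\cdot\theta,
\end{equation*}
which is the claimed formula in the real oriented case.

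For the complex case, I would simply recall that for a complex vector bundle the Stiefel--Whitney classes of the underlying real bundle are determined by the Chern classes via $w_{2i}(\vb{E}) = c_i(\vb{E}) \bmod 2$ and $w_{2i+1}(\vb{E}) = 0$; in particular $w_2(\vb{E}) = c_1(\vb{E}) \bmod 2$. Substituting this into the formula above yields the stated expression $\Sq^2 + c_1(\vb{E})$.

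There is no serious obstacle here; the only point requiring care is to state the Wu formula in the form we need (namely, acting on the Thom class rather than as an operation on the base) and to observe that orientability kills $\Sq^1\theta$, so that the Cartan formula reduces to a two-term expression. The complex case is then a purely formal consequence of the mod-$2$ relation between Chern and Stiefel--Whitney classes.
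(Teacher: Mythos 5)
Your proof is correct and takes essentially the same approach as the paper: both reduce the claim to Thom's characterization of Stiefel--Whitney classes via the Thom isomorphism, $\Sq^i\theta = \pi^*(w_i(\vb{E}))\cdot\theta$, as in \cite{MilnorStasheff}*{page~91}. If anything, your write-up is slightly more careful than the paper's, since you make explicit the Cartan expansion and the fact that orientability ($w_1=0$) kills the cross-term $\Sq^1(\pi^*x)\cdot\Sq^1\theta$, a point the paper leaves implicit.
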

\begin{proof}
This is a special case of an identity of Thom, which he in fact used to \emph{define} Stiefel-Whitney classes:
\begin{equation*}
  \Sq^i(\pi^*x\cdot\theta)=\pi^*\left(\Sq^i(x)+w_i(\vb{E})x\right)\cdot\theta
\end{equation*}
See \cite{MilnorStasheff}*{page~91}.
\end{proof}

When \( X \) is a CW complex with cells only in even dimensions, the operation \( {\Sq^2}+c_1 \) can be viewed as a differential on  \( H^*(X;\Z/2) \) for any \( c_1 \) in \( H^2(X;\Z/2) \). Extending our previous notation, we denote the cohomology with respect to this differential by
\begin{equation}\label{eq:notation:Sq-cohomology}
 H^*(X,{\Sq^2}+c_1):=H^*(H^*(X;\Z/2),{\Sq^2}+c_1)
\end{equation}
\begin{cor}[of Lemmas \ref{lem:2-torsion_of_KO} and \ref{lem:Sq-on-Thom}]\label{cor:2-torsion_of_Thom-KO}
  If the Atiyah-Hirzebruch spectral sequence of \( \rKO^*(\Thom{\vb{E}}) \) degenerates on the \( E_3 \)-page, then
 \begin{equation*}
  \KO^{2i-1}(X;\vb{E})\cong\bigoplus_k H^{2i+8k}(X,{\Sq^2}+c_1\vb{E})
 \end{equation*}
\end{cor}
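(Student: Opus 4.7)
The strategy is essentially to apply Lemma~\ref{lem:2-torsion_of_KO} directly to the pointed Thom space \(\Thom\vb{E}\) and then translate the resulting formula back to \(X\) via the Thom isomorphism together with Lemma~\ref{lem:Sq-on-Thom}. First I would check that the reduced cohomology of \(\Thom\vb{E}\) is concentrated in even degrees: since \(\vb{E}\) is complex of real rank \(2r\), the singular Thom isomorphism shifts degrees by an even integer, so this hypothesis on \(\Thom\vb{E}\) is inherited from the implicit assumption (inherited through Lemma~\ref{lem:2-torsion_of_KO}) that \(X\) has only even-dimensional cells. This is all that the proof of Lemma~\ref{lem:2-torsion_of_KO} actually needs, so the lemma applies verbatim in its reduced form.

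Given this, the degeneration hypothesis and Lemma~\ref{lem:2-torsion_of_KO} immediately yield
\[
   \rKO^{2i-1}(\Thom\vb{E}) \;\cong\; \bigoplus_k \rH^{2i+8k}(\Thom\vb{E},\Sq^2).
\]
Next I would use Lemma~\ref{lem:Sq-on-Thom}. The Thom isomorphism \(\pi^*(-)\cdot\theta\colon H^*(X;\Z/2)\xrightarrow{\cong}\rH^{*+2r}(\Thom\vb{E};\Z/2)\) intertwines \(\Sq^2\) on the Thom space with the twisted operator \(\Sq^2+c_1(\vb{E})\) on \(X\) (using that for a complex bundle \(c_1(\vb{E})\) reduces mod~\(2\) to \(w_2(\vb{E})\)). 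Thus it induces an isomorphism of cochain complexes, and passing to cohomology in the notation of \eqref{eq:notation:Sq-cohomology} gives
\[
  \rH^{*+2r}(\Thom\vb{E},\Sq^2)\;\cong\;H^*(X,\Sq^2+c_1(\vb{E})).
\]

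Finally, combining these two identifications with the defining convention \(\KO^{p}(X;\vb{E}):=\rKO^{p+2r}(\Thom\vb{E})\) produces
\[
\KO^{2i-1}(X;\vb{E}) \;=\; \rKO^{2i+2r-1}(\Thom\vb{E}) \;\cong\; \bigoplus_k \rH^{2i+2r+8k}(\Thom\vb{E},\Sq^2) \;\cong\; \bigoplus_k H^{2i+8k}(X,\Sq^2+c_1(\vb{E})),
\]
which is the desired formula. There is no real obstacle here: the proof is essentially bookkeeping on top of the two cited lemmas. The only point requiring any attention is verifying the parity hypothesis for the Thom space so that Lemma~\ref{lem:2-torsion_of_KO} applies, and keeping the shift \(2r\) between reduced Thom-space groups and twisted KO-groups consistent throughout the indexing.
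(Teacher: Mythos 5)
Your argument is exactly the intended one: the paper states this as an immediate corollary of Lemmas~\ref{lem:2-torsion_of_KO} and~\ref{lem:Sq-on-Thom} and gives no further proof. Your bookkeeping (evenness of the cells of \(\Thom\vb{E}\), the \(2r\) shift, and the intertwining of \(\Sq^2\) with \(\Sq^2 + c_1(\vb{E})\) under the Thom isomorphism) is correct and fills in precisely what the paper leaves implicit.
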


It is true more generally that the differentials in the spectral sequence for \( \rKO^*(\Thom{\vb{E}}) \) depend only on the second Stiefel-Whitney class of \( \vb{E} \).
This follows from the observation that the Atiyah-Hirzebruch spectral sequence is compatible with Thom isomorphisms, as is made more precise by the next lemma:

Fix a vector bundle \( \vb{E} \) of constant rank \( r \) over a finite-dimensional CW complex \( X \). Suppose \( \vb{E} \) is oriented with respect to ordinary cohomology and let \( \theta\in\rH^*(\Thom{\vb{E}};\Z) \) be a Thom class.
\begin{lem}\label{lem:AHSS-Thom-iso}
If \( \vb{E} \) is oriented with respect to \( \KO^* \), then \( \theta \) survives to the \( E_{\infty} \)-page of the Atiyah-Hirzebruch spectral sequence computing \( \rKO^*(\Thom{\vb{E}}) \), and the Thom isomorphism for \( H^* \) extends to an isomorphism of spectral sequences.
That is, for each page right multiplication with the class of \( \theta \) in \( \rE^{r,0}_s(\Thom{\vb{E}}) \) gives an isomorphism of \( E^{*,*}_s(X) \)-modules
 \begin{equation*}
  E^{*,*}_s(X)\overset{\cdot\theta}{\underset{\cong}\longrightarrow} \rE^{*+r,*}_s(\Thom{\vb{E}})
 \end{equation*}
Moreover, any lift of \( \theta\in \rE^{r,0}_{\infty}(\Thom{\vb{E}}) \) to \( \rKO^r(\Thom{\vb{E}}) \) defines a Thom class of \( \vb{E} \) with respect to \( \KO^* \).
The isomorphism of the \( E_{\infty} \)-pages of the spectral sequences is induced by the Thom isomorphism given by multiplication with any such class.
\end{lem}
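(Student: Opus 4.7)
The plan is to combine two standard features of the Atiyah-Hirzebruch spectral sequence: its naturality with respect to maps of ring spectra, and its multiplicativity with respect to pairings of spaces. The Thom diagonal \( \Thom{\vb{E}}\to X_+\smsh\Thom{\vb{E}} \) makes \( \rKO^*(\Thom{\vb{E}}) \) a module over \( \KO^*(X) \), and this pairing is compatible with the AHSS in the sense that each \( \rE_s^{*,*}(\Thom{\vb{E}}) \) becomes a module over \( E_s^{*,*}(X) \), with differentials satisfying the Leibniz rule and filtrations matching up on \( E_\infty \) (\cf \cite{Kochman}*{Proposition~4.2.9}). Consequently, for any \( v\in\rKO^r(\Thom{\vb{E}}) \), right multiplication by \( v \) induces a map of spectral sequences \( E_s^{*,*}(X)\to \rE_s^{*+r,*}(\Thom{\vb{E}}) \) of bidegree \( (r,0) \).

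By hypothesis, there exists a \( \KO^* \)-Thom class \( u\in\rKO^r(\Thom{\vb{E}}) \), that is, a class restricting fibrewise to a generator of \( \rKO^r(S^r_x)\cong\Z \). Naturality of the AHSS with respect to the unit map of ring spectra \( \KO\to H\Z \), combined with the fact that this unit takes a \( \KO \)-Thom class to an ordinary integral Thom class, shows that \( \theta\in E_2^{r,0} \) is a permanent cycle with representative the class of \( u \) in \( \rE_\infty^{r,0} \). The induced map of spectral sequences arising from \( u \) then has \( E_2 \)-page equal to the classical Thom isomorphism \( x\mapsto x\cdot\theta \); an isomorphism on \( E_2 \) forces isomorphisms on every later page, proving the asserted isomorphism of spectral sequences.

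Now let \( v \) be any lift of \( \theta\in\rE_\infty^{r,0} \) to \( \rKO^r(\Thom{\vb{E}}) \). Then \( v-u \) lies in the next filtration step \( F^{r+1}\rKO^r(\Thom{\vb{E}}) \), so right multiplication by \( v \) and by \( u \) induce the same map on the associated graded \( E_\infty^{*,*} \); in particular, multiplication by \( v \) is also an isomorphism on \( E_\infty \). The standard filtration argument for exhaustive, bounded-below filtrations (both of which are available here because \( X \) is finite-dimensional) then upgrades this to an isomorphism \( \KO^*(X)\xrightarrow{\cong}\rKO^{*+r}(\Thom{\vb{E}}) \) on the abutments. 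To see that \( v \) is itself a \( \KO^* \)-Thom class it suffices to check that it restricts to a generator on every fibre Thom space \( S^r_x \); but the AHSS for \( \rKO^*(S^r_x) \) is concentrated in column \( r \) and collapses at \( E_2 \), so \( \rKO^r(S^r_x) \) is naturally identified with \( \rH^r(S^r_x;\Z) \), and \( v|_{S^r_x} \) corresponds to the generator \( \theta|_{S^r_x} \). The main obstacle in all of this is the careful invocation of the multiplicative and natural structure of the AHSS — in particular, ensuring that naturality with respect to \( \KO\to H\Z \) really identifies the \( E_2 \)-image of \( u \) with \( \theta \), rather than a multiple. A slick alternative would be to encode the \( \KO^* \)-orientation as a weak equivalence of \( \KO \)-module spectra \( \Sigma^r(X_+\smsh\KO)\to\Thom{\vb{E}}\smsh\KO \) and deduce the whole statement from functoriality of the AHSS under such maps.
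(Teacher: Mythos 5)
Your argument is essentially sound, but the key step---showing $\theta$ is a permanent cycle---is carried out by a genuinely different route than the paper's. You appeal to naturality of the AHSS with respect to the spectrum map $\KO\to H\Z$, together with the identification of the edge homomorphism $\rKO^r(\Thom{\vb E})\twoheadrightarrow\rE_\infty^{r,0}\hookrightarrow\rH^r(\Thom{\vb E};\Z)$ with the map on cohomology theories induced by $\KO\to H\Z$. As you correctly flag, this identification is standard but is precisely the step requiring care. The paper avoids it entirely: it pulls back along the fibre inclusion $i_x\colon S^r\hookrightarrow\Thom{\vb E}$, observes that $i_x^*$ is surjective on $\rKO^r$ because a $\KO$-Thom class exists (it restricts to a generator), passes to the induced surjection on $\rE_\infty^{r,0}$, and then exploits the fact that $\rE_\infty^{r,0}$ receives no differentials and is therefore a \emph{subgroup} of $\rE_2^{r,0}=\rH^r(\Thom{\vb E})\cong\Z$ on which $i_x^*$ restricts to the map $\rH^r(\Thom{\vb E})\to\rH^r(S^r)$. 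Since this map sends $\theta$ to a generator, surjectivity on $\rE_\infty^{r,0}$ can only happen if $\theta$ itself survives. The paper's route uses nothing beyond naturality of the AHSS under pullback; yours buys conceptual clarity at the cost of invoking the compatibility of edge maps with ring spectrum morphisms.

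One minor imprecision: ``right multiplication by $v\in\rKO^r(\Thom{\vb E})$ induces a map of spectral sequences'' is not literally correct, because the page-wise module structure is by classes on the \emph{pages}, not by elements of the abutment. What is true is that once $\theta$ is known to survive, its class in $\rE_s^{r,0}$ for each $s$ gives a well-defined multiplication operator commuting with differentials by Leibniz, and this is the map of spectral sequences. An element $v$ of the abutment only acts on the $E_\infty$-page (via the associated graded). Your subsequent argument implicitly uses this corrected version and reaches the right conclusion, so this is a wording issue rather than a gap, but the distinction matters when writing this up precisely. The final check that a lift of $\theta$ restricts to a generator on each fibre is correct and matches what the paper asserts (though the paper leaves it implicit).
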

\begin{proof}
We may assume without loss of generality that \(X\) is connected. Fix a point \( x \) on \( X \). The inclusion of the fibre over \( x \) into \( \vb{E} \) induces a map \( i_x\colon{S^r\hookrightarrow\Thom{\vb{E}}} \).
By assumption, the pullback \( i^*_x \) on ordinary cohomology maps \( \theta \) to a generator of \( \rH^r(S^r) \), and the pullback on \( \rKO^* \) gives a surjection
\begin{equation*}
 \rKO^*(\Thom{\vb{E}})\overset{i_x^*}{\twoheadrightarrow}\rKO^r(S^r)
\end{equation*}
Consider the pullback along \( i_x \) on the \( E_{\infty} \)-pages of the spectral sequences for \( S^r \) and \( \Thom{\vb{E}} \).
Since we can identify
\( \rE_{\infty}^{r,0}(\Thom{\vb{E}}) \)
with a quotient of
\( \rKO^r(\Thom{\vb{E}}) \)
and
\( \rE_{\infty}^{r,0}(S^r) \)
with
\( \rKO^r(S^r) \),
we must have a surjection
\begin{equation*}
 i_x^*\colon{\rE_{\infty}^{r,0}(\Thom{\vb{E}})\twoheadrightarrow \rE_{\infty}^{r,0}(S^r)}
\end{equation*}
On the other hand, the behaviour of \( i_x^* \) on \( \rE_{\infty}^{r,0} \) is determined by its behaviour on \( \rH^r \), whence we can only have such a surjection if \( \theta \) survives to the \( \rE_{\infty} \)-page of \( \Thom{\vb{E}} \).
Thus, all differentials vanish on \( \theta \), and if multiplication by \( \theta \) induces an isomorphism from \( E_s^{*,*}(X) \) to \( \rE_s^{*+r,*} \) on page \( s \), it also induces an isomorphism on the next page.
Lastly, consider any lift of \( \theta \) to an element \( \Theta \) of \( \rKO^r(\Thom{\vb{E}}) \).
It is clear by construction that right multiplication with \( \Theta \) gives an isomorphism from \( E_{\infty}(X) \) to \( \rE_{\infty}(\Thom{E}) \), and thus it also gives an isomorphism from \( \KO^*(X) \) to \( \rKO^*(\Thom{\vb{E}}) \).
Thus, \( \Theta \) is a Thom class for \( \vb{E} \) with respect to \( \KO^* \).
\end{proof}

Lemma~\ref{lem:AHSS-Thom-iso} allows the following strengthening of Lemma~\ref{lem:KOtwists}:
\begin{cor}\label{cor:Identifaction_of_Thom-AHSSs}
For complex vector bundles \( \vb{E} \) and \( \vb{F} \) over \( X \) with identical first Chern class modulo \( 2 \), the spectral sequences computing \( \rKO^*(\Thom{\vb{E}}) \) and \( \rKO^*(\Thom{\vb{F}}) \) can be identified up to a possible shift of columns when \( \vb{E} \) and \( \vb{F} \) have different ranks.
\end{cor}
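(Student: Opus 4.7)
The plan is to upgrade the argument of Lemma~\ref{lem:KOtwists} from the level of groups to the level of spectral sequences, using Lemma~\ref{lem:AHSS-Thom-iso} in place of the bare Thom isomorphism. Since \( c_1(\vb{E}\oplus\vb{F}) = c_1(\vb{E}) + c_1(\vb{F}) \equiv 0 \) and \( c_1(\vb{E}\oplus\vb{E}) = 2c_1(\vb{E}) \equiv 0 \) modulo two, both \( \vb{E}\oplus\vb{F} \) and \( \vb{E}\oplus\vb{E} \) are spin. I would use these two spin bundles to identify the Atiyah-Hirzebruch spectral sequences for \( \rKO^*(\Thom\vb{E}) \) and \( \rKO^*(\Thom\vb{F}) \) with a common reference, the spectral sequence for \( \rKO^*(\Thom(\vb{E}\oplus\vb{E}\oplus\vb{F})) \), up to appropriate shifts of columns.

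The key technical step is a relative version of Lemma~\ref{lem:AHSS-Thom-iso}: for any complex vector bundle \( \vb{V} \) over \( X \) and any spin complex bundle \( \vb{W} \) over \( X \) of rank \( s \), the Atiyah-Hirzebruch spectral sequence for \( \rKO^*(\Thom(\vb{V}\oplus\vb{W})) \) is isomorphic to that for \( \rKO^*(\Thom\vb{V}) \), shifted by \( 2s \) columns. I would realise this isomorphism by multiplication with the \( \KO \)-theoretic Thom class of \( \vb{W} \) produced by Lemma~\ref{lem:AHSS-Thom-iso}, using the identification \( \Thom(\vb{V}\oplus\vb{W}) \simeq \Thom\vb{V}\wedge_X\Thom\vb{W} \) together with the multiplicativity of the Atiyah-Hirzebruch spectral sequence. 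The argument should follow the pattern of Lemma~\ref{lem:AHSS-Thom-iso} itself: restricting to a fibre of \( \vb{V} \) over a point reduces to the sphere case, showing that the Thom class of \( \vb{W} \) survives to \( E_\infty \), after which multiplicativity forces multiplication by it to be an isomorphism on every page. Applying this relative Thom iso with \( (\vb{V},\vb{W}) = (\vb{E},\vb{E}\oplus\vb{F}) \) identifies the spectral sequence for \( \rKO^*(\Thom(\vb{E}\oplus\vb{E}\oplus\vb{F})) \) with that for \( \rKO^*(\Thom\vb{E}) \) up to a column shift of \( 2(\rank\vb{E}+\rank\vb{F}) \), and applying it with \( (\vb{V},\vb{W}) = (\vb{F},\vb{E}\oplus\vb{E}) \) identifies the same sequence with that for \( \rKO^*(\Thom\vb{F}) \) up to a shift of \( 4\rank\vb{E} \); composing yields the desired identification, with net column shift \( 2(\rank\vb{F}-\rank\vb{E}) \), matching the possible shift mentioned in the statement.

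I expect the main obstacle to be the rigorous formulation and proof of the relative Thom iso at the spectral-sequence level. Lemma~\ref{lem:AHSS-Thom-iso} as stated covers only a single bundle over a CW complex, and the relative setting demands a careful treatment of the fibrewise smash product structure on Thom spaces and of how the Thom class of \( \vb{W} \) acts multiplicatively on the spectral sequence for \( \rKO^*(\Thom(\vb{V}\oplus\vb{W})) \) compatibly with the differentials on every page. Once this compatibility is secured, chaining the two applications above is immediate.
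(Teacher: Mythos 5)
The paper states this corollary without a proof of its own — it is declared to follow from Lemma~\ref{lem:AHSS-Thom-iso}, mirroring the way Lemma~\ref{lem:KOtwists} was proved via the ordinary Thom isomorphism — so there is no explicit argument in the text to compare against, only the intended route. Your proposal reconstructs that route correctly: use the common reference bundle \( \vb{E}\oplus\vb{E}\oplus\vb{F} \) exactly as in the proof of Lemma~\ref{lem:KOtwists}, and upgrade both Thom isomorphisms there to spectral-sequence identifications via Lemma~\ref{lem:AHSS-Thom-iso}. The column-shift bookkeeping (net shift \( 2(\rank\vb{F}-\rank\vb{E}) \)) is right. So this is essentially the same approach the paper has in mind, carried out with more care than the paper records.

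The one place where you express reservation is the "relative" Thom isomorphism at the spectral-sequence level, for \( \Thom(\vb V\oplus\vb W) \) with \( \vb W \) spin. You are right that this is the nontrivial content that the paper leaves implicit (both here and already in the proof of Lemma~\ref{lem:KOtwists}, where "viewing \( \vb E\oplus\vb E\oplus\vb F \) as a bundle over \( \vb E \)" is shorthand for precisely this relative statement; note that \( \Thom_{\vb E}(p^*(\vb E\oplus\vb F)) \) is \emph{not} literally \( \Thom_X(\vb E\oplus\vb E\oplus\vb F) \), so the literal reading does not give what is needed). But the obstacle is milder than you suggest. The fibrewise diagonal \( \Thom(\vb V\oplus\vb W)\to\Thom\vb V\smsh\Thom\vb W \) induces a pairing of Atiyah--Hirzebruch spectral sequences by the external-product multiplicativity already cited from Kochman; the mod-2 Thom class of \( \vb W \) is a permanent cycle by Lemma~\ref{lem:AHSS-Thom-iso}; pairing with it is an isomorphism on \( E_2 \) by the cohomological Thom isomorphism; and, since multiplication by a permanent cycle commutes with the differentials, the standard page-by-page comparison then forces an isomorphism on every page, exactly as in the proof of Lemma~\ref{lem:AHSS-Thom-iso}. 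So no genuinely new idea is required beyond what the paper already supplies; once this is spelled out, your two applications and their composition finish the argument.
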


\pagebreak
\section{Examples}\label{sec:examples}
\newcommand{\varGr}[2]{{\Gr_{#1,#2}}}
\newcommand{\GrSp}[2]{{X_{#1}}} 
\newcommand{\GrSOc}[2]{{S_{#1}}} 
\newcommand{\EIII}{\mathrm{EIII}}
\newcommand{\EVII}{\mathrm{EVII}}
We now turn to the study of projective homogeneous varieties, that is, varieties of the form \( G/P \) for some complex simple linear algebraic group \( G \) with a parabolic subgroup \( P \). Any such variety has a cell decomposition \cite{BGG:Schubert-cells}*{Proposition~5.1}, so that our comparison theorem applies.
As far as we are only interested in the topology of \( G/P \), we may alternatively view it as a homogeneous space for the compact real Lie group \( G^c \) corresponding to \( G \):

\begin{prop}
 Let \( P \) be a parabolic subgroup of a simple complex algebraic group \( G \). Then we have a diffeomorphism
\begin{equation*}
\quotient{G}{P} \cong \quotient{G^c}{K}
\end{equation*}
where \( K \) is a compact subgroup of maximal rank in a maximal compact subgroup \( G^c \) of \( G \). More precisely, \( K \) is a maximal compact subgroup of a Levi subgroup of \( P \).
\end{prop}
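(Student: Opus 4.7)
The plan is to show that the inclusion $G^c\hookrightarrow G$ induces a $G^c$-equivariant diffeomorphism $G^c/(G^c\cap P)\xrightarrow{\cong}G/P$, and then to identify $G^c\cap P$ with $K$. First I fix a Levi decomposition $P=L\ltimes U$ of the parabolic, with $U$ its unipotent radical and $L$ a reductive Levi factor. Any maximal compact subgroup $K$ of $L$ is contained in some maximal compact subgroup of $G$, and by the conjugacy of maximal compact subgroups I may choose $G^c$ so that $K\subset G^c$; this inclusion gives $K\subset G^c\cap P$ for free.

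The central input is the Iwasawa-type decomposition $G=G^c\cdot P$. For a Borel subgroup $B\subset P$ this is the standard fact that the compact form $G^c$ acts transitively on the full flag variety $G/B$; the statement for the larger parabolic $P$ then follows from $G^c\cdot B\subset G^c\cdot P$. This surjectivity implies that the smooth $G^c$-equivariant map $G^c/(G^c\cap P)\to G/P$ is surjective, and injectivity is automatic since the map is induced by an inclusion of groups. The source is compact because $G^c$ is, the target is compact because $G/P$ is a smooth projective variety, and a short dimension count based on the Cartan decomposition $\mathfrak{g}=\mathfrak{g}^c\oplus i\mathfrak{g}^c$ together with $\mathfrak{g}=\mathfrak{u}^{-}\oplus\mathfrak{l}\oplus\mathfrak{u}$ shows that both quotients have the same real dimension $2\dim_{\C}\mathfrak{u}$. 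A smooth $G^c$-equivariant bijection between compact manifolds of the same dimension is a diffeomorphism.

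It remains to check $G^c\cap P=K$. Since $U$ is a simply connected complex unipotent group it contains no non-trivial compact subgroup, so $G^c\cap U=\{1\}$ and the projection $P\twoheadrightarrow L$ is injective on the compact subgroup $G^c\cap P$. Its image in $L$ is then a compact subgroup containing $K$, so by the maximality of $K$ the image equals $K$, giving $G^c\cap P=K$. The maximal rank claim follows because $K$ contains a maximal torus of $L$, and any maximal torus of a Levi subgroup $L$ is a maximal torus of $G$; hence the rank of $K$ equals the rank of $G^c$. The only non-formal ingredient is the global decomposition $G=G^c\cdot P$, so I expect that to be the main obstacle; once it is granted, the remaining identifications and the diffeomorphism property assemble routinely.
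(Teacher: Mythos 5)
Your proof is correct and shares the same main ingredient as the paper's---the Iwasawa decomposition $G=G^c\cdot P$, giving the equivariant diffeomorphism $G^c/(G^c\cap P)\cong G/P$---but you identify $G^c\cap P$ with a maximal compact subgroup of a Levi by a genuinely different and more elementary route. The paper argues homotopically: from the Iwasawa decomposition it deduces that $G^c\cap P\hookrightarrow P$ is a homotopy equivalence, observes that $L\hookrightarrow P$ is a homotopy equivalence because the unipotent radical $U$ is contractible, and then concludes that $K\hookrightarrow L^c$ is a homotopy equivalence of compact groups, hence an equality. You instead fix $K$ as a maximal compact of $L$ in advance, choose $G^c\supset K$ by conjugacy of maximal compacts, and show directly that $G^c\cap P=K$ using the algebraic fact that a unipotent group has no nontrivial compact subgroups (so the projection $P\twoheadrightarrow L$ is injective on $G^c\cap P$, and its image is a compact subgroup of $L$ containing $K$, hence equal to $K$). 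Both ultimately exploit that $U$ is "small"---contractible, respectively without compact subgroups---but your version avoids any appeal to homotopy equivalences of groups and is arguably cleaner; you also explicitly justify the maximal-rank claim, which the paper leaves unargued. One small imprecision: $K$ contains a maximal \emph{compact} torus of $L$, whose complexification is a maximal torus of $L$ and hence of $G$; the rank equalities $\operatorname{rk}K=\operatorname{rk}L=\operatorname{rk}G=\operatorname{rk}G^c$ then follow. The dimension count at the end of your second paragraph is unnecessary, since a $G^c$-equivariant bijection between $G^c$-homogeneous manifolds is automatically a diffeomorphism, but it does no harm.
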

\begin{proof}
The Iwasawa decomposition for \( G \) viewed as a real Lie group implies that we have a diffeomorphism \( G\cong G^c\cdot P \) \cite{OnishchikVinberg}*{Ch.~6, Prop.~1.7}, inducing a diffeomorphism of quotients as claimed for \( K=G^c\cap P \).
Since \( G^c\hookrightarrow G \) is a homotopy equivalence, so is the inclusion \( G^c\cap P\hookrightarrow P \).
On the other hand, if \( L \) is a Levi subgroup of \( P \) then \( P=U \rtimes L \), where \( U \) is unipotent and hence contractible. So the inclusion \( L\hookrightarrow P \) is also a homotopy equivalence. It follows that any maximal compact subgroup \( L^c \) of \( L \) is also maximal compact in \( P \), and conversely that any maximal compact subgroup of \( P \) will be contained as a maximal compact subgroup in some Levi subgroup of \( P \). We may therefore assume that \( K \subset L^c \subset L \subset P \) and conclude that \( K\hookrightarrow L^c \) is a homotopy equivalence. Since both groups are compact, it follows that in fact \( K\cong L^c \).
\end{proof}

The KO-theory of homogeneous varieties has been studied intensively. In particular, the papers \cite{KonoHara:Gr} and \cite{KonoHara:HSS} of Kono and Hara provide complete computations of the (untwisted) KO-theory of all compact irreducible hermitian symmetric spaces, which we list in Table~\ref{table:HSS}. For the convenience of the reader, we indicate how each of these arises as a quotient of a simple complex algebraic group \( G \) by a parabolic subgroup \( P \), describing the latter in terms of marked nodes on the Dynkin diagram of \( G \) as in \cite{FultonHarris}*{\S~23.3}. The last column gives an alternative description of each space as a quotient of a compact real Lie group.

\begin{table}[b!t!]
\begin{center}
\newlength{\textcolwidth}
\settowidth{\textcolwidth}{Maximal symplectic G}
\newcommand{\textcol}[1]{\parbox{\textcolwidth}{\vspace{4pt}\raggedright #1\vspace{4pt}}}
\newcommand{\Pdiag}[1]{
   \ensuremath{\xymatrix@R-=0pt@C-=4pt{#1}}
}
\newcommand{\nodeP}{\circ}  
\newcommand{\nodeG}{\bullet}
\newcommand{\PdiagAIII}{\Pdiag{
*{\nodeP}\ar@{-}[r] & {\cdots} \ar@{-}[r] & *{\nodeP} \ar@{-}[r] & *{\nodeG}\ar@{-}[r] & *{\nodeP} \ar@{-}[r] & \cdots \ar@{-}[r] & *{\nodeP}\\
*{\text{\scriptsize \( 1 \)}} & & & *{\text{\scriptsize \( n \)}} & & & \save *{\text{\scriptsize \( n \)+\( m \)--\( 1 \)}} \restore
}}
\newcommand{\PdiagCI}{\Pdiag{
*{\nodeP}\ar@{-}[r] & *{\nodeP}\ar@{-}[r] & {\cdots} \ar@{-}[r] & *{\nodeP} \ar@{-}[r] & *{\nodeP}\ar@{=}[rr]|-{<} & & *{\nodeG}
}}
\newcommand{\PdiagBDI}{
\Pdiag{
*{\nodeG}\ar@{-}[r] & *{\nodeP} \ar@{-}[r] & {\cdots} \ar@{-}[r] & *{\nodeP}\ar@{=}[rr]|-{>} & & *{\nodeP} & *{\text{\scriptsize (\( n \) odd)}}\\
\\
 & & & & *{\nodeP} \\
*{\nodeG}\ar@{-}[r] & *{\nodeP} \ar@{-}[r] & {\cdots} \ar@{-}[r] & *{\nodeP}\ar@{-}[ru]\ar@{-}[rd] &  & &  *{\text{\scriptsize (\( n \) even)}}\\
 & & & & *{\nodeP}
}}
\newcommand{\PdiagDIII}{\Pdiag{
 & & & & *{\nodeP} \\
*{\nodeP}\ar@{-}[r] & *{\nodeP} \ar@{-}[r] & {\cdots} \ar@{-}[r] & *{\nodeP}\ar@{-}[ru]\ar@{-}[rd] &  & \\
 & & & & *{\nodeG}
}}
\newcommand{\PdiagEIII}{\Pdiag{
  & & *{\nodeP}\ar@{-}[dd] & & & & & \\ \\
  *{\nodeP}\ar@{-}[r] & *{\nodeP}\ar@{-}[r] & *{\nodeP}\ar@{-}[r] & *{\nodeP}\ar@{-}[r] & *{\nodeG} \\
}}
\newcommand{\PdiagEVII}{\Pdiag{
  & & *{\nodeP}\ar@{-}[dd] & & & & &\\ \\
  *{\nodeP}\ar@{-}[r] & *{\nodeP}\ar@{-}[r] & *{\nodeP}\ar@{-}[r] & *{\nodeP}\ar@{-}[r] & *{\nodeP}\ar@{-}[r] & *{\nodeG}\\
}}
  \renewcommand{\arraystretch}{2}
\begin{adjustwidth}{-2cm}{-2cm}
\begin{center}
\begin{tabular}[t]{lM|Ml|M}
  \toprule
   & \quotient{G}{P} & G & Diagram of \( P \) & \quotient{G^c}{K} \\
  \midrule
  \textcol{Grassmannians (AIII)}
  & \varGr{m}{n}
  & \SL_{m+n} & \PdiagAIII
  & \dfrac{\mathrm{U}(m+n)}{\mathrm{U}(m)\times\mathrm{U}(n)}\\
  \textcol{Maximal symplectic Grassmannians (CI)} & \GrSp{n}{2n}
  & \Sp_{2n} & \PdiagCI
  & \quotient{\Sp(n)}{\mathrm{U}(n)}\\
  \textcol{Projective quadrics of dimension \( n\geq 3 \) (BDI)} & Q^n
  & \SO_{n+2} & \PdiagBDI
  & \dfrac{\SO(n+2)}{\SO(n)\times\SO(2)}\\
  \textcol{Spinor varieties (DIII)} & \GrSOc{n}{2n}
  & \SO_{2n} & \hspace{-3pt}\raisebox{10pt}{\PdiagDIII}
  & \quotient{\SO(2n)}{\U(n)}\\
  \textcol{Exceptional hermitian symmetric spaces:} & \EIII
  & E_6 & \PdiagEIII
  & \dfrac{E_6^c}{\Spin(10)\cdot S^1}\\[-12pt]
  & & & & {\scriptstyle(\Spin(10)\cap S^1 = \Z/4)}\\[0pt]
  \textcol{} & \EVII
  & E_7 & \PdiagEVII
  & \dfrac{E_7^c}{E_6^c\cdot S^1}\\[-12pt]
  & & & &{\scriptstyle(E_6^c\cap S^1 = \Z/3)}\\[0pt]
  \bottomrule
\end{tabular}
\end{center}
\end{adjustwidth}
\caption{List of irreducible compact hermitian symmetric spaces. The symbols AIII, CI, \dots{} refer to E.~Cartan's classification. In the description of \( \varGr{m}{n} \) we use \( \mathrm{U}(m+n) \) instead of \( G^c=\SU(m+n) \).
}
\label{table:HSS}
\end{center}
\end{table}

On the following pages, we will run through this list of examples and, in each case, extend Kono and Hara's computations to include KO-groups twisted by a line bundle. Since each of these spaces is a ``Grassmannian'' in the sense that the parabolic subgroup \( P \) in \( G \) is maximal, its Picard group is free abelian on a single generator. Thus, there is exactly one non-trivial twist that we need to consider. In most cases, we ---~ reassuringly~--- recover results for Witt groups that are already known. In a few other cases, we consider our results new.

\pagebreak[3]
The untwisted KO-theory of complete flag varieties is also known in all three classical cases thanks to Kishimoto, Kono and Ohsita. We do not reproduce their result here but instead refer the reader directly to \cite{KKO:Flags}. By a recent result of Calm{\`e}s and Fasel, all Witt groups with non-trivial twists vanish for these varieties \cite{CalmesFasel:Trivial}.

\subsection{Notation}\label{sec:eg:notation}
Topologically, a cellular variety is a CW complex with cells only in even (real) dimensions. For such a CW complex \( X \) the KO-groups can be written in the form displayed in Table~\ref{table:eg:notation} below.
This was shown in Section~\ref{sec:AHSS:cellular} in the case when the twist \( \lb{L} \) is trivial, and the general case follows:
if \( X \) is a CW complex with only even-dimensional cells, so is the Thom space of any complex vector bundle over \( X \)
\cite{MilnorStasheff}*{Lemma~18.1}.

In the following examples, results on \( \KO^{\ast} \) will be displayed by listing the values of the \( t_i \) and \( s_i \). Since the \( t_i \) are just given by counting cells, and since the numbers of odd- and even-dimensional cells of a Thom space \( \Thom_X\!\vb{E} \) only depend on \( X \) and the rank of \( \vb{E} \), the \( t_i \) are in fact independent of \( \lb{L} \). The \( s_i \), on the other hand, certainly will depend on the twist, and we will sometimes acknowledge this by writing \( s_i(\lb{L}) \).

\begin{table}[tbhp]
\renewcommand{\arraystretch}{1.3}
\begin{center}
\begin{tabular}[t]{>{\( }l<{ \)}@{  =  }>{\( }r<{ \)}@{  =  }>{\( }r<{ \)}}
 \toprule
 \KO^6(X;\lb{L}) & \Z^{t_1}\oplus (\Z/2)^{s_0} & \GW^3(X;\lb{L}) \\
 \KO^7(X;\lb{L}) &                (\Z/2)^{s_0} &  \W^0(X;\lb{L}) \\
 \KO^0(X;\lb{L}) & \Z^{t_0}\oplus (\Z/2)^{s_1} & \GW^0(X;\lb{L}) \\
 \KO^1(X;\lb{L}) &                (\Z/2)^{s_1} &  \W^1(X;\lb{L}) \\
 \KO^2(X;\lb{L}) & \Z^{t_1}\oplus (\Z/2)^{s_2} & \GW^1(X;\lb{L}) \\
 \KO^3(X;\lb{L}) &                (\Z/2)^{s_2} &  \W^2(X;\lb{L}) \\
 \KO^4(X;\lb{L}) & \Z^{t_0}\oplus (\Z/2)^{s_3} & \GW^2(X;\lb{L}) \\
 \KO^5(X;\lb{L}) &                (\Z/2)^{s_3} &  \W^3(X;\lb{L}) \\
 \bottomrule
\end{tabular}
\caption{Notational conventions in the examples. Only the \( s_i \) depend on \( \lb{L} \).}
\label{table:eg:notation}
\end{center}
\end{table}

\subsection{Projective spaces}\label{subsec:P}
Complex projective spaces are perhaps the simplest examples for which Theorem~\ref{thm:mainthm} asserts something non-trivial, so we describe the results here separately before turning to complex Grassmannians in general. The computations of the Witt groups of projective spaces were certainly landmark events in the history of the theory. In 1980, Arason was able to show that the Witt group \( \W^0(\P^n) \) of \( \P^n \) over a field \( k \) agrees with the Witt group of \( k \) \cite{Arason}. The shifted Witt groups of projective spaces, and more generally of arbitrary projective bundles, were first computed by Walter in \cite{Walter:PB}. Quite recently, Nenashev deduced the same results via different methods \cite{Nenashev:Q}.

In the topological world, complete computations of \( \KO^i(\CP^n) \) were first published in a 1967 paper by Fujii \cite{Fujii:P}. It is not difficult to deduce the values of the twisted groups \( \KO^i(\CP^n;\OO(1)) \) from these: the Thom space \( \Thom(\OO_{\CP^n}(1)) \) can be identified with \( \CP^{n+1} \), so
\begin{align*}
        \KO^i(\CP^n;\OO(1))      &= \rKO^{i+2}(\Thom(\OO(1)))\\
                                                                &= \rKO^{i+2}(\CP^{n+1})
\end{align*}
Alternatively, we could do all required computations directly following the methods outlined in Section~\ref{sec:AHSS}. The result, in any case, is displayed in Table~\ref{table:result:P}, coinciding with the known results for the (Grothendieck-)Witt groups.

\begin{table}[tbhp]
\begin{center}
\begin{tabular}{>{\( }l<{ \)}|MM|MMMM|MMMM}
\toprule
\KO^*(\CP^n;\lb{L}) &         &         & \multicolumn{4}{M|}{\vb{L}\equiv\OO}  & \multicolumn{4}{M}{\vb{L}\equiv\OO(1)} \\
                   & t_0     & t_1     & s_0 & s_1 & s_2 & s_3       & s_0 & s_1 & s_2 & s_3 \\
 \midrule
 n\equiv 0 \mod 4  & (n/2)+1 & n/2     & 1 & 0 & 0 & 0                        & 1 & 0 & 0 & 0 \\
 n\equiv1          & (n+1)/2 & (n+1)/2 & 1 & 1 & 0 & 0                        & 0 & 0 & 0 & 0 \\
 n\equiv2          & (n/2)+1 & n/2     & 1 & 0 & 0 & 0                        & 0 & 0 & 1 & 0 \\
 n\equiv3          & (n+1)/2 & (n+1)/2 & 1 & 0 & 0 & 1                        & 0 & 0 & 0 & 0 \\
\bottomrule
\end{tabular}
\caption{KO-groups of projective spaces}
\label{table:result:P}
\end{center}
\end{table}

\subsection{Grassmannians}
We now consider the Grassmannians \( \varGr{m}{n} \) of complex \( m \)-planes in \( \C^{m+n} \). Again both the Witt groups and the untwisted KO-groups are already known: the latter by Kono and Hara \cite{KonoHara:Gr}, the former by the work of Balmer and Calm{\`e}s \cite{BalmerCalmes:Gr}. A detailed comparison of the two sets of results in the untwisted case has been carried out by Yagita \cite{Yagita:Gr}. We provide here an alternative topological computation of the twisted groups.

Balmer and Calm{\`e}s state their result by describing an additive basis of the total Witt group of \( \varGr{m}{n} \) in terms of certain ``even Young diagrams''. This is probably the most elegant approach, but needs some space to explain. We will stick instead to the tabular exposition used in the other examples. Let \( \OO(1) \) be a generator of \( \Pic(\varGr{m}{n}) \), say the dual of the determinant line bundle of the universal \( m \)-bundle over \( \varGr{m}{n} \). The result is displayed in Table~\ref{table:results:Gr}.

\begin{table}[b!t!h!]
\begin{center}
\newlength{\firstcolwidth}
\settowidth{\firstcolwidth}{or: \( m\equiv 2 \) and \( n \) odd}
\newcommand{\firstcol}[1]{\parbox{\firstcolwidth}{\vspace{4pt}\raggedright #1\vspace{4pt}}}
\newlength{\testheight}
\settoheight{\testheight}{\( \dfrac{b}{b} \)}

\begin{tabular}{l|MM|MMMM|MMMM}
\toprule
\( \KO^*(\varGr{m}{n};\lb{L}) \) & & & \multicolumn{4}{M|}{\vb{L}\equiv\OO} & \multicolumn{4}{M}{\vb{L}\equiv\OO(1)}\\
                    & t_0               & t_1            & s_0 & s_1 & s_2 & s_3   & s_0 & s_1 & s_2 & s_3 \\
 \midrule
 \firstcol{\( m \) and \( n \) odd s.t. \\ \( \quad m\equiv n \)}
                    & \dfrac{a}{2}      & \dfrac{a}{2}   & b & b & 0 & 0           & 0 & 0 & 0 & 0 \\
 \firstcol{\( m \) and \( n \) odd s.t. \\ \( \quad m\not\equiv n \)}
                    & \dfrac{a}{2}      & \dfrac{a}{2}   & b & 0 & 0 & b           & 0 & 0 & 0 & 0 \\
 \hline
 \firstcol{\(\!\!\!\! \begin{cases}m\equiv n\equiv 0 \\
                         m\equiv 0 \text{ and } n \text{ odd} \\
                         n\equiv 0 \text{ and } m \text{ odd}
          \end{cases} \)}
                    & \dfrac{a+b}{2}    & \dfrac{a-b}{2} & b & 0 & 0 & 0           & b & 0 & 0 & 0 \\
 \firstcol{\(\!\!\!\! \begin{cases} m\equiv n \equiv 2\\
                          m\equiv 2 \text{ and } n \text{ odd} \\
                          n\equiv 2 \text{ and } m \text{ odd}
            \end{cases} \)}
                    & \dfrac{a+b}{2}    & \dfrac{a-b}{2} & b & 0 & 0 & 0           & 0 & 0 & b & 0 \\
 \hline
 \firstcol{\( m\equiv 0 \) and \( n\equiv 2 \)\minrowheight{\dummyfrac}}
                    & \dfrac{a+b}{2}    & \dfrac{a-b}{2} & b & 0 & 0 & 0                        & b_1 & 0 & b_2 & 0 \\
 \firstcol{\( m\equiv 2 \) and \( n\equiv 0 \)\minrowheight{\dummyfrac}}
                    & \dfrac{a+b}{2}    & \dfrac{a-b}{2} & b & 0 & 0 & 0                        & b_2 & 0 & b_1 & 0 \\
 \midrule
 \multicolumn{11}{c}{\parbox{3.7\firstcolwidth}{\(   \)\\
 All equivalences (\( \equiv \)) are modulo \( 4 \). For the values of \( a \) and \( b=b_1+b_2 \), put \( k:=\lfloor\nicefrac{m}{2}\rfloor \) and \( l:=\lfloor\nicefrac{n}{2}\rfloor \). Then
  \begin{align*}
    a   &:= \mm{m+n\\\\m}   & b   &:= \mm{k+l\\\\k}   &
    b_1 &:= \mm{k+l-1\\\\k} & b_2 &:= \mm{k+l-1\\\\k-1}
  \end{align*}
 }}\\
 \bottomrule
\end{tabular}
\caption{KO-groups of Grassmannians}
\label{table:results:Gr}
\end{center}
\end{table}

Our computation is based on the following geometric observation. Let \( \vb{U}_{m,n} \) and \( \vb{U}_{m,n}^{\perp} \) be the universal \( m \)-bundle and the orthogonal \( n \)-bundle on \( \varGr{m}{n} \), so that \( \vb{U}\oplus\vb{U}^{\perp}=\OO^{\oplus (m+n)} \). We have various natural inclusions between the Grassmannians of different dimensions, of which we fix two:
\begin{description}
\item[\( \varGr{m}{n-1}\hookrightarrow\varGr{m}{n} \)]
via the inclusion of the first \mbox{\( m+n-1 \)} coordinates into \( \C^{m+n} \)
\item[\( \varGr{m-1}{n}\hookrightarrow\varGr{m}{n} \)]
by sending an \( (m-1) \)-plane \( \Lambda \) to the \( m \)-plane \( \Lambda\oplus\left<e_{m+n}\right> \), where \( e_1, e_2, \ldots, e_{m+n} \) are the canonical basis vectors of \( \C^{m+n} \)
\end{description}
\begin{lem}\label{lem:Gr-Geometry}
The normal bundle of \( \varGr{m}{n-1} \) in \( \varGr{m}{n} \) is the dual \( \vb{U}^{\dual}_{m,n-1} \) of the universal \( m \)-bundle. Similarly, the normal bundle of \( \varGr{m-1}{n} \) in \( \varGr{m}{n} \) is given by \( \vb{U}^{\perp}_{m-1,n} \).
In both cases, the embeddings of the subspaces extend to embeddings of their normal bundles, such that one subspace is the closed complement of the normal bundle of the other.
\end{lem}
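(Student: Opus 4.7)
The plan is to reduce each claim to concrete geometric constructions, starting from the standard identification \( T_V \varGr{m}{n} \cong \Hom(V, \C^{m+n}/V) \) at each point \( V \in \varGr{m}{n} \). For the inclusion \( \varGr{m}{n-1}\hookrightarrow \varGr{m}{n} \), the tangent bundle of the subvariety is the subbundle \( \Hom(\vb{U}, \C^{m+n-1}/\vb{U}) \) of \( \Hom(\vb{U}, \C^{m+n}/\vb{U}) \), so the normal bundle is the quotient \( \Hom(\vb{U}, \C^{m+n}/\C^{m+n-1}) \cong \vb{U}^{\dual}_{m,n-1} \). For \( \varGr{m-1}{n}\hookrightarrow \varGr{m}{n} \), an analogous calculation---after splitting off the fixed summand \( \langle e_{m+n}\rangle \) of the image \( m \)-plane, so that \( T_V\varGr{m}{n} \) decomposes as \( \Hom(\Lambda, W)\oplus\Hom(\langle e_{m+n}\rangle,W) \) with \( W\cong \C^{m+n-1}/\Lambda \)---identifies the normal bundle with \( \vb{U}^{\perp}_{m-1,n} \).

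For the second part I would construct explicit algebraic maps from these normal bundles into \( \varGr{m}{n} \) and identify their images with the required open complements. Given \( W \in \varGr{m}{n-1} \) and \( \xi \in W^{\dual} \), set \( V_\xi := \{\,w + \xi(w)\,e_{m+n} : w\in W\,\} \); this defines a morphism from the total space of \( \vb{U}^{\dual}_{m,n-1} \) to \( \varGr{m}{n} \), whose image is \( \{V : e_{m+n}\notin V\} \)---the open complement of \( \varGr{m-1}{n} \)---and whose zero section recovers the embedding of \( \varGr{m}{n-1} \). Symmetrically, for \( \Lambda \in \varGr{m-1}{n} \) and \( [v] \in \C^{m+n-1}/\Lambda \), set \( V_{\Lambda,[v]} := \Lambda \oplus \langle v + e_{m+n}\rangle \); this gives a morphism from \( \vb{U}^{\perp}_{m-1,n} \) to \( \varGr{m}{n} \), with image the open complement \( \{V : V \not\subset \C^{m+n-1}\} \) of \( \varGr{m}{n-1} \), and with zero section the embedding of \( \varGr{m-1}{n} \).

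What remains is to check that these set-theoretic constructions are algebraic isomorphisms of vector bundles onto their claimed images, and that the resulting tubular neighbourhoods agree with the normal bundle identifications from the first step. Both checks are routine: the two maps are manifestly algebraic and injective (use that for \( V \) with \( e_{m+n}\notin V \) the projection \( \C^{m+n}\twoheadrightarrow\C^{m+n-1} \) restricts to an isomorphism \( V\overset{\cong}{\to} W \), and for \( V\not\subset\C^{m+n-1} \) the intersection \( V\cap\C^{m+n-1} \) has dimension \( m-1 \)), and the differentials along the zero sections reproduce the normal bundle identifications computed abstractly. The only real source of friction is the bookkeeping with complements and the two index shifts; I do not expect any genuine obstacle.
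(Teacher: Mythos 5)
The paper states Lemma~\ref{lem:Gr-Geometry} without proof, so there is no argument of its own to compare against; your task here was to supply one, and what you propose is correct. The explicit graph parametrization \( (W,\xi)\mapsto V_\xi=\{w+\xi(w)e_{m+n}:w\in W\} \) for the normal bundle of \( \Gr_{m,n-1} \), and the dual construction \( (\Lambda,[v])\mapsto\Lambda\oplus\langle v+e_{m+n}\rangle \) for the normal bundle of \( \Gr_{m-1,n} \), give well-defined algebraic open embeddings of total spaces whose zero sections recover the original inclusions and whose images are exactly the open complements of the other subvariety (\( \{V:e_{m+n}\notin V\} \) and \( \{V:V\not\subset\C^{m+n-1}\} \) respectively). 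Your abstract identification of the normal bundles via \( T_V\Gr\cong\Hom(V,\C^{m+n}/V) \) is also right, with the small remark that you obtain the normal bundle of \( \Gr_{m-1,n} \) as the quotient \( \C^{m+n-1}/\vb U_{m-1,n} \), which is canonically isomorphic to the paper's \( \vb U^{\perp}_{m-1,n} \) (defined as the orthogonal complement inside the trivial bundle). This is in fact precisely the style of argument the paper does write out in full for the symplectic and spinor analogues, Lemmas~\ref{lem:GrSp-Geometry} and~\ref{lem:Spinor-Geometry}, where the embeddings of normal bundles are likewise given as graphs of explicit linear maps in a chosen coordinate splitting; your proof would sit naturally next to them.
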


This gives us two cofibration sequences of pointed spaces:
\begin{align}
\label{seq:Gr-cofib1}
 \varGr{m-1}{n}_+ \overset{i}{\hookrightarrow} &\;\varGr{m}{n}_+ \overset{p}{\twoheadrightarrow} \Thom(\vb{U}^{\dual}_{m,n-1}) \\
\label{seq:Gr-cofib2}
 \varGr{m}{n-1}_+  \overset{i}{\hookrightarrow} &\;\varGr{m}{n}_+ \overset{p}{\twoheadrightarrow} \Thom(\vb{U}^{\perp}_{m-1,n})
\end{align}
These sequences are the key to relating the untwisted KO-groups to the twisted ones.
Following the notation in \cite{KonoHara:Gr}, we write \( A_{m,n} \) for the cohomology of \( \varGr{m}{n} \) with \( \Z/2 \)-coefficients, denoting by \( a_i \) and \( b_i \) the Chern classes of \( \vb{U} \) and \( \vb{U}^{\perp} \), respectively, and by \( a \) and \( b \) the total Chern classes \( 1+a_1+\cdots+ a_m \) and \( 1+b_1+\cdots +b_n \):
\begin{equation*}
 A_{m,n}=\cfrac[l]{\Z/2\left[a_1,a_2,\ldots,a_m,b_1,b_2,\ldots b_n\right]}{a\cdot b = 1}
\end{equation*}
We write \( d \) for the differential given by the second Steenrod square \( \Sq^2 \), and \( d' \) for \( {\Sq^2}+a_1 \).
To describe the cohomology of \( A_{m,n} \) with respect to these differentials, it is convenient to introduce the algebra
\begin{equation*}
 B_{k,l}=\cfrac[l]{\Z/2\left[a_2^2,a_4^2,\ldots,a_{2k}^2,b_2^2,b_4^2,\ldots,b_{2l}^2\right]}{(1+a_2^2+\cdots+a_{2k}^2)(1+b_2^2+\cdots+b_{2l}^2)=1}
\end{equation*}
Note that this subquotient of \( A_{2k,2l} \) is isomorphic to \( A_{k,l} \) up to a ``dilatation'' in grading. Proposition~2 in \cite{KonoHara:Gr} tells us that
\begin{equation*}
 H^*(A_{m,n},d)=         \begin{cases}
                          B_{k,l}                               & \text{if $(m,n) = $ \parbox[t]{3.5cm}{$(2k,2l)$, $(2k+1,2l)$ or $(2k,2l+1)$}}\\
                          B_{k,l}\oplus B_{k,l}\cdot a_mb_{n-1} & \text{if $(m,n) = (2k+1,2l+1)$}
                         \end{cases}
\end{equation*}
Here, the algebra structure in the case where both \( m \) and \( n \) are odd is determined by \( (a_mb_{n-1})^2=0 \).

\begin{lem}\label{lem:Sq-cohomology-of-Amn} The cohomology of \( A_{m,n} \) with respect to the twisted differential \( d' \) is as follows:
 \begin{equation*}
   H^*(A_{m,n},d')=\begin{cases}
                            B_{k,l-1}\cdot a_m \oplus B_{k-1,l}\cdot b_n  & \text{ if \( (m,n) = (2k,2l) \)}   \\
                            B_{k,l} \cdot a_m                             & \text{ if \( (m,n) = (2k,2l+1) \)} \\
                            B_{k,l} \cdot b_n                             & \text{ if \( (m,n) = (2k+1,2l) \)} \\
                            0                                             & \text{ if \( (m,n) = (2k+1,2l+1) \)}
                           \end{cases}
 \end{equation*}
\end{lem}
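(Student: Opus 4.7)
My plan is to deduce $H^{\ast}(A_{m,n}, d')$ from Kono and Hara's description of the untwisted $\Sq^2$-cohomology by running the cofibrations \eqref{seq:Gr-cofib1} and \eqref{seq:Gr-cofib2} through the Thom isomorphism. As a preliminary step, Wu's formula $\Sq^2(c_i) = c_1 c_i + (i-1)c_{i+1}$ (mod~$2$) combined with $b_1 \equiv a_1 \pmod 2$ (from $ab=1$) yields the explicit formulas $d'(a_i) = (i-1) a_{i+1}$ and $d'(b_j) = (j-1) b_{j+1}$. So $d'$ acts symmetrically on the two families of Chern classes and annihilates all odd-indexed generators, which already foreshadows the role of $a_m$ and $b_n$ in the answer.

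The core observation is that $c_1(\vb{U}^{\perp}_{m,n}) = b_1$ and $c_1(\vb{U}^{\dual}_{m,n}) = -a_1$ are both congruent to $a_1 \pmod 2$. By Lemma~\ref{lem:Sq-on-Thom} and the Thom isomorphism for mod-$2$ singular cohomology, the $\Sq^2$-cohomology of $\Thom_{\Gr_{m,n}}(\vb{U}^{\perp}_{m,n})$ is identified with $H^{\ast-2n}(A_{m,n}, d')$, and similarly for $\vb{U}^{\dual}_{m,n}$ after a shift of $2m$. Applying \eqref{seq:Gr-cofib2} with $(m+1, n)$ in place of $(m, n)$, and noting that all three spaces there have mod-$2$ cohomology concentrated in even degrees, the cohomology long exact sequence collapses in each even degree to a short exact sequence; naturality of $\Sq^2$ turns this into a short exact sequence of chain complexes, and the snake lemma produces
\begin{equation*}
\cdots \to H^{i - 2n}(A_{m, n}, d') \to H^i(A_{m+1, n}, d) \overset{\beta}{\to} H^i(A_{m+1, n-1}, d) \overset{\delta}{\to} H^{i + 2 - 2n}(A_{m, n}, d') \to \cdots,
\end{equation*}
with the middle terms computed in \cite{KonoHara:Gr}. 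An analogous long exact sequence arises from \eqref{seq:Gr-cofib1}.

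The remainder is a four-case analysis driven by the behaviour of the pullback $\beta$, which corresponds via Kono-Hara's presentation to sending the top $b$-generator (for \eqref{seq:Gr-cofib2}) or the top $a$-generator (for \eqref{seq:Gr-cofib1}) to zero. When both $m$ and $n$ are odd, $\beta$ is an isomorphism between two copies of $B_{k+1,l}$ and the sequence forces $H^{\ast}(A_{m,n}, d') = 0$. When $m$ is odd and $n$ even, \eqref{seq:Gr-cofib2} produces a surjective $\beta: B_{k+1,l} \twoheadrightarrow B_{k+1,l-1}$ with $\delta = 0$, identifying $H^{\ast}(A_{m,n}, d')$ with $\ker \beta \cong B_{k,l}\cdot b_n$; the case $m$ even and $n$ odd is symmetric via \eqref{seq:Gr-cofib1}. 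In the ``both even'' case, the same analysis applied to either cofibration produces both a nontrivial $\ker\beta$ and a nontrivial $\coker\beta$ (the latter feeding into the next $H^{\ast}$ via $\delta$), and a dimension count shows that $H^{\ast}(A_{m,n}, d')$ must split as $\ker\beta \oplus \coker\beta$, matching the claimed direct sum $B_{k,l-1}a_m \oplus B_{k-1,l}b_n$.

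The main obstacle will be the explicit identification of the kernel and cokernel with the free $B$-modules on $b_n$ and on $a_m$ rather than merely matching dimensions. This amounts to tracking the Euler classes $c_n(\vb{U}^{\perp}_{m,n}) = b_n$ and $c_m(\vb{U}^{\dual}_{m,n}) \equiv a_m \pmod 2$ through the Thom isomorphism, and checking that in $B_{k+1,l}$ the ideal generated by $b_{2l}^2$ (respectively, in the connecting-map contribution, the summand $B_{k-1,l}a_{2k-1}b_{2l}$) lifts back to $B_{k,l-1}\cdot a_m$ and $B_{k-1,l}\cdot b_n$ under the Gysin map realising these identifications.
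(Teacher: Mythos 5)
Your strategy — feed Kono-Hara's untwisted computation through the cofibration sequences and the Thom isomorphism, then run a four-case analysis on the kernel and cokernel of the pullback — is exactly the paper's: it assembles the cohomology long exact sequences into the two short exact sequences of differential $\Z/2$-modules \eqref{eq:GrA1} and \eqref{eq:GrA2} (with $p^*(\theta^{\dual})=a_m$ and $p^*(\theta^{\perp})=b_n$, matching your Euler-class observation) and then argues case by case as you sketch. The one place you stop short is precisely the ``main obstacle'' you flag: pinning down the kernel and the boundary contribution as the free $B$-modules on $a_m$ and on $b_n$ rather than just matching dimensions. The paper closes that gap not by chasing a Gysin map but by observing that $\Sq^2(a_m)=a_1a_m$, hence $d'(a_m)=0$; this makes $a_m\theta^{\dual}$ a $d'$-cocycle in the Thom complex, and since $p^*(a_m\theta^{\dual})=a_m^2$, the assignment $a_m^2\mapsto a_m\theta^{\dual}$ defines a splitting of $p^*$ in the short exact sequence \eqref{seq:aux:Gr}, exhibiting $B_{k,l-1}\cdot a_m$ as a direct summand of $H^*(A_{m,n},d')$. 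The symmetric argument with the other cofibration yields the $B_{k-1,l}\cdot b_n$ summand, and a dimension count finishes. Note that your preliminary formula $d'(a_i)=(i-1)a_{i+1}$ already gives $d'(a_m)=(m-1)a_{m+1}=0$ since $a_{m+1}=0$ in $A_{m,n}$; you had the key ingredient in hand but did not convert it into the explicit splitting.
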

\begin{proof}
Let us shift the dimensions in the cofibration sequences \eqref{seq:Gr-cofib1} and \eqref{seq:Gr-cofib2} in such a way that we have the Thom spaces of \( \vb{U}^{\dual}_{m,n} \) and \( \vb{U}^{\perp}_{m,n} \) on the right. Since the cohomologies of the spaces involved are concentrated in even degrees, the associated long exact sequence of cohomology groups falls apart into short exact sequences.  Reassembling these, we obtain two short exact sequences of differential \( (A_{m,n+1},d) \)- and \( (A_{m+1,n},d) \)-modules, respectively:
\begin{align}
\label{eq:GrA1}
 0\rightarrow (A_{m,n},d')\cdot\theta^{\dual} \overset{p^*}{\longrightarrow} & (A_{m,n+1},d) \overset{i^*}{\longrightarrow} (A_{m-1,n+1},d) \rightarrow 0 \\
\label{eq:GrA2}
 0\rightarrow (A_{m,n},d')\cdot\theta^{\perp} \overset{p^*}{\longrightarrow} & (A_{m+1,n},d) \overset{i^*}{\longrightarrow} (A_{m+1,n-1},d) \rightarrow 0
\end{align}
Here, \( \theta^{\dual} \) and \( \theta^{\perp} \) are the respective Thom classes of \( \vb{U}^{\dual}_{m,n} \) and \( \vb{U}^{\perp}_{m,n} \).
The map \( i^* \) in the first row is the obvious quotient map annihilating \( a_{m} \). Its kernel, the image of \( A_{m,n} \) under multiplication by \( a_m \), is generated as an \( A_{m,n+1} \)-module by its unique element in degree \( 2m \), and thus we must have \( p^*(\theta^{\dual})=a_m \). Likewise, in the second row we have \( p^*(\theta^{\perp})=b_n \).

The lemma can be deduced from here case by case.
For example, when both \( m \) and \( n \) are even, \( i^* \) maps \( H^*(A_{m,n+1},d)=B_{k,l} \) to the first summand of \( H^*(A_{m-1,n+1},d)=B_{k-1,l}\oplus {B_{k-1,l}\cdot a_{m-1}b_n} \) by annihilating \( a_{m}^2 \). We know by comparison with the short exact sequences for the \( A_{m,n} \) that the kernel of this map is \( B_{k,l-1} \) mapping to \( B_{k,l} \) under multiplication by \( a_{m}^2 \). Thus, we obtain a short exact sequence
\begin{equation}\label{seq:aux:Gr}
 0\rightarrow B_{k-1,l}\cdot a_{m-1}b_n \overset{\partial}{\longrightarrow}H^*(A_{m,n},d')\cdot\theta^{\dual}\overset{p^*}{\longrightarrow}B_{k,l-1}\cdot a_{m}^2\rightarrow 0
\end{equation}
For the Steenrod square \(\Sq^2\) of the top Chern class \( a_m \) of \( \vb U\), we have \(\Sq^2(a_m)=a_1a_m\). This can be checked, for example, by expressing \( a_m\) as the product of the Chern roots of \(\vb U\). Consequently, \( d'(a_m)=0 \). Together with the fact that \( H^*(A_{m,n},d') \) is a module over \( H^*(A_{m,n+1},d) \), this shows that we can define a splitting of \( p^* \) by sending \( a_m^2 \) to \( a_m\theta^{\dual} \).  Thus, \( H^*(A_{m,n},d') \) contains \( {B_{k,l-1}\cdot a_m} \) as a direct summand. If instead of working with sequence~\eqref{eq:GrA1} we work with sequence~\eqref{eq:GrA2}, we see that \( H^*(A_{m,n},d') \) also contains a direct summand \( {B_{k-1,l}\cdot b_n} \). These two summands intersect trivially, and a dimension count shows that together they encompass all of \( H^*(A_{m,n},d') \). Alternatively, one may check explicitly that the boundary map \( \partial \) above sends \( a_{m-1}b_n \) to \( b_n\theta \).
The other cases are simpler.
\end{proof}

\begin{lem}
 The Atiyah-Hirzebruch spectral sequence for \mbox{\( \rKO^*(\Thom{\vb{U}^{\dual}_{m,n}}) \)} collapses at the \( E_3 \)-page.
\end{lem}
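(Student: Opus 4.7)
The plan is to use the cofibration sequence~\eqref{seq:Gr-cofib1} with $n$ replaced by $n+1$,
\[\varGr{m-1}{n+1}_+ \xrightarrow{i} \varGr{m}{n+1}_+ \xrightarrow{p} \Thom(\vb U^\dual_{m,n}),\]
together with Kono and Hara's theorem \cite{KonoHara:Gr} that the Atiyah-Hirzebruch spectral sequence for $\KO^*$ of any complex Grassmannian collapses at $E_3$. The aim is to transfer this collapse from the two Grassmannians to the cofiber on the right.

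By Lemma~\ref{lem:AHSS_first-differential}, it suffices to rule out a first higher differential $d_r$ (necessarily with $r\equiv 2 \pmod 8$) in row $q\equiv -1$. The most direct approach would be to invoke Corollary~\ref{cor:Xcollaps-Tcollapse} with $X=\varGr{m}{n+1}$ and $T=\Thom(\vb U^\dual_{m,n})$, but its hypothesis fails: the long exact sequence that underlies the proof of Lemma~\ref{lem:Sq-cohomology-of-Amn} identifies the kernel of $p^*\colon H^*(\Thom,\Sq^2)\to H^*(\varGr{m}{n+1},\Sq^2)$ with the image of the boundary map $\partial$ coming in from $H^{*-1}(\varGr{m-1}{n+1},\Sq^2)$, and this kernel is typically non-zero.

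The workaround is to use both ends of the cofibration. By naturality, $d_r$ commutes with $p^*$ and with $\partial$; the $E_3$-collapse for both Grassmannians then gives $p^*\circ d_r=0$ \emph{and} $d_r\circ\partial=0$ on the Thom spectral sequence. The first equation forces the image of $d_r$ to land in $\ker p^*=\im\partial$, and the second says that $d_r$ vanishes on $\im\partial$, so $d_r$ descends to a map $\rE_r(\Thom)/\im\partial \cong \im p^* \to \im\partial$ of bidegree $(r,-r+1)$. Using the explicit bases for $H^*(A_{m,n},d')$ produced in Lemma~\ref{lem:Sq-cohomology-of-Amn}, I would verify case-by-case on the parities of $(m,n)$ that no non-trivial map of this bidegree can exist, completing the argument.

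The main obstacle is this final bidegree bookkeeping, as one has to inspect each of the four parity cases for $(m,n)$ and track where the summands $B_{k,l-1}\cdot a_m$, $B_{k-1,l}\cdot b_n$, etc., sit under $p^*$ versus $\partial$. As a cleaner but more computational fallback, one can compute $\rKO^*(\Thom(\vb U^\dual_{m,n}))$ directly from the $\KO$-theoretic long exact sequence of the cofibration, using Kono and Hara's description of $\KO^*$ of Grassmannians as input, and verify that its total rank matches the rank predicted by the $E_3$-page together with Corollary~\ref{cor:2-torsion_of_Thom-KO}; any surviving higher differential would strictly lower the latter, so rank equality would force collapse at $E_3$.
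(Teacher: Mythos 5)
You correctly identify the obstacle: Corollary~\ref{cor:Xcollaps-Tcollapse} does not apply directly because $p^*$ from~\eqref{seq:Gr-cofib1} is not injective in row $q\equiv -1$ (its kernel being the image of $\partial$), and looking at the parity cases is the right instinct. However, the decisive step ``by naturality, $d_r$ commutes with $\partial$'' is unjustified. The map $p^*$ is induced by an actual map of spaces, hence gives a map of Atiyah--Hirzebruch spectral sequences, which is what yields $p^*\circ d_r=d_r\circ p^*$. The $\partial$ you invoke is not of that kind: it is the algebraic connecting homomorphism produced by applying the snake lemma to the short exact sequence of cochain complexes~\eqref{eq:GrA1}, a Bockstein with respect to $\Sq^2$, and it is not induced by any map of spaces. (The genuine topological connecting map $\Thom(\vb{U}^{\dual}_{m,n})\to\Sigma(\varGr{m-1}{n+1}_+)$ from the Puppe sequence does give a map of spectral sequences, but here that map is zero already on the $E_2$-page: the suspension shifts the cohomology of $\varGr{m-1}{n+1}$ into odd degrees while $\rH^*(\Thom\vb{U}^{\dual})$ is concentrated in even degrees. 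The Bockstein $\partial$ is a different, non-zero map.) With no naturality available, $d_r\circ\partial=0$ is not established, and the rest of your primary argument does not go through.

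The paper sidesteps the Bockstein entirely by a different device. Since $c_1(\vb{U}^{\dual})\equiv c_1(\vb{U}^{\perp})$ modulo~$2$, Corollary~\ref{cor:Identifaction_of_Thom-AHSSs} identifies the spectral sequence for $\rKO^*(\Thom\vb{U}^{\dual}_{m,n})$ with that for $\rKO^*(\Thom\vb{U}^{\perp}_{m,n})$. This makes available \emph{two} geometric $p^*$-maps, from~\eqref{seq:Gr-cofib1} and~\eqref{seq:Gr-cofib2}, landing in the collapsing spectral sequences of $\varGr{m}{n+1}$ and $\varGr{m+1}{n}$ respectively. In the critical case with $m$ and $n$ both even, one of these $p^*$ is injective on the summand $B_{k,l-1}\cdot a_m$ of $H^*(A_{m,n},d')$ (as computed in Lemma~\ref{lem:Sq-cohomology-of-Amn}) and the other on $B_{k-1,l}\cdot b_n$; together they detect all of row $q\equiv -1$, and the argument of Corollary~\ref{cor:Xcollaps-Tcollapse} then applies. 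Your rank-counting fallback is sound in outline but circular in practice: the $\KO$-theoretic long exact sequence of the cofibration does not determine $\rKO^*(\Thom\vb{U}^{\dual})$ without already knowing its connecting maps, which is essentially the same difficulty as showing the collapse.
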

\begin{proof}
By Proposition~4 of \cite{KonoHara:Gr} we know that the spectral sequence for \( \KO^*(\varGr{m}{n}) \) collapses as this stage, for any \( m \) and \( n \).
Now, if both \( m \) and \( n \) are even, we have
\begin{equation*}
 (B_{k,l-1}\cdot a_m \oplus B_{k-1,l}\cdot b_n)\cdot\theta
\end{equation*}
in the \( (-1)^{\text{st}} \) row of the \( E_3 \)-pages of the spectral sequences for \( \Thom{\vb{U}^{\dual}} \) and \( \Thom{\vb{U}^{\perp}} \),
where \( \theta=\theta^{\dual} \) or \( \theta^{\perp} \), respectively.
In the case of \( \vb{U}^{\dual} \) we see from \eqref{seq:aux:Gr} that \( p^* \) maps the second summand injectively to the \( E_3 \)-page of the spectral sequence for \( \KO^*(\varGr{m}{n+1}) \). Similarly, in the case of \( \vb{U}^{\perp} \), the first summand is mapped injectively to the \( E_3 \)-page of \( \KO^*(\varGr{m+1}{n}) \).
Since the spectral sequences for \( \Thom{\vb{U}^{\dual}} \) and \( \Thom{\vb{U}^{\perp}} \) can be identified via Corollary~\ref{cor:Identifaction_of_Thom-AHSSs}, we can argue as in Corollary~\ref{cor:Xcollaps-Tcollapse} to see that they must collapse at this stage. Again, the cases when at least one of \( m \), \( n \) is odd are similar but simpler.
\end{proof}
We may now apply Corollary~\ref{cor:2-torsion_of_Thom-KO}. The entries of Table~\ref{table:results:Gr} that do not appear in \cite{KonoHara:Gr}, \ie those of the last four columns, follow from Lemma~\ref{lem:Sq-cohomology-of-Amn} by noting that \( B_{k,l} \) is concentrated in degrees \( 8i \) and of dimension \( \dim B_{k,l}=\dim A_{k,l} = \mm{k+l\\k} \).

\subsection{Maximal symplectic Grassmannians}\label{sec:eg:symplecticGr}
The Grassmannian of isotropic \( n \)-planes in \( \C^{2n} \) with respect to a non-degenerate skew-symmetric bilinear form is given by \( \GrSp{n}{2n}=\Sp(n)/U(n) \). The universal bundle \( \vb{U} \) on the usual Grassmannian \( \Gr(n,2n) \) restricts to the universal bundle on \( \GrSp{n}{2n} \), and so does the orthogonal complement bundle \( \vb{U}^{\perp} \). We will continue to denote these restrictions by the same letters. Thus, \( \vb{U}\oplus\vb{U}^{\perp}\cong\C^{2n} \) on \( \GrSp{n}{2n} \), and the fibres of \( \vb{U} \) are orthogonal to those of \( \vb{U}^{\perp} \) with respect to the standard hermitian metric on \( \C^{2n} \). The determinant line bundles of \( \vb{U} \) and \( \vb{U}^{\perp} \) give dual generators \( \OO(1) \) and \( \OO(-1) \) of the Picard group of \( \GrSp{n}{2n} \).

\begin{thm}\label{thm:GrSp-KO}
 The additive structure of \( \KO^*(\GrSp{n}{2n};\lb{L}) \) is as follows:\vspace{-0.5\baselineskip}
 \begin{center}
 \begin{tabular}[t]{>{\( }l<{ \)}|MM|M|M}
 \toprule
                    & t_0     & t_1     & s_i(\OO)                   & s_i(\OO(1))                \\
 \midrule
   \text{\( n \) even}  & 2^{n-1} & 2^{n-1} & \rho(\tfrac{n}{2}, {i})   & \rho(\tfrac{n}{2}, i-n) \\
   \minrowheight{\( \tfrac{n}{n} \)}
   \text{\( n \) odd}   & 2^{n-1} & 2^{n-1} & \rho(\tfrac{n+1}{2}, {i}) & 0                         \\
 \bottomrule
 \end{tabular}
 \end{center}

\vspace{0.5\baselineskip}
\noindent
 Here, for any \( i\in\Z/4 \) we write \( \rho(n,i) \) for the dimension of the \( i \)-graded piece of a \( \Z/4 \)-graded exterior algebra \( \Lambda_{\Z/2}(g_1, g_2, \ldots, g_n) \) on \( n \) homogeneous generators \( g_1 \), \( g_2 \), \dots, \( g_n \) of degree \( 1 \), \ie
 \begin{equation*}
  \rho(n,i)=\sum_{\begin{smallmatrix}{d\equiv i}\\{\mod 4}\end{smallmatrix}}{\genfrac{(}{)}{0pt}{}{n}{d}}
 \end{equation*}
 A table of the values of \( \rho(n, i) \) can be found in \cite{KonoHara:HSS}*{Proposition~4.1}.
\end{thm}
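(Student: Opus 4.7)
The plan is to follow the template of the proof for ordinary Grassmannians in the previous subsection, simplified by the fact that the mod-2 cohomology of $\GrSp{n}{2n}$ is an exterior algebra. The symplectic form on $\C^{2n}$ induces an isomorphism $\vb{U}^\perp \cong \vb{U}^\dual$, so the Grassmannian relation $c(\vb{U}) \cdot c(\vb{U}^\perp) = 1$ reduces modulo 2 to $c(\vb{U})^2 = 1$, forcing $a_i^2 = 0$ for $i = 1, \ldots, n$. Thus $H^*(\GrSp{n}{2n}; \Z/2) = \tilde{A}_n := \Lambda_{\Z/2}(a_1, a_2, \ldots, a_n)$, with $a_i := c_i(\vb{U})$ in degree $2i$. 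Schubert cells correspond to strict subsets $S \subseteq \{1, \ldots, n\}$ with complex cell dimension $\sum_{i \in S} i$, and exactly $2^{n-1}$ of the $2^n$ subsets have even sum, giving $t_0 = t_1 = 2^{n-1}$.

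Next I would determine the cohomology of $\tilde{A}_n$ with respect to $\Sq^2$ and the twisted differential $d' := \Sq^2 + a_1$. By the Wu formula, $\Sq^2(a_k) = a_1 a_k + (k-1)a_{k+1}$ (with $a_{n+1} = 0$), so $d'(a_k) = a_{k+1}$ when $k$ is even with $k < n$, and $d'(a_k) = 0$ otherwise. Splitting $\tilde{A}_n = \tilde{A}_n' \oplus a_1 \cdot \tilde{A}_n'$ with $\tilde{A}_n' := \Lambda(a_2, \ldots, a_n)$ reduces each computation to two elementary maps on $\tilde{A}_n'$. For odd $n$ the complex $(\tilde{A}_n, d')$ breaks into acyclic pairs---in the case $n = 3$, the pairs $(1, a_1)$, $(a_2, a_3)$, $(a_1 a_2, a_1 a_3)$, $(a_2 a_3, a_1 a_2 a_3)$ manifestly exhibit $H^*(\tilde{A}_3, d') = 0$, and the pattern extends by induction on $n$---giving $H^*(\tilde{A}_n, d') = 0$. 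For even $n$, analogous but more involved bookkeeping matches the predicted $\rho$-dimensions in both the twisted and untwisted cases.

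The final step is to establish that the Atiyah--Hirzebruch spectral sequence collapses at the $E_3$-page. The untwisted collapse is a result of Kono and Hara \cite{KonoHara:HSS}. For the twist, Lemma~\ref{lem:KOtwists} and Corollary~\ref{cor:Identifaction_of_Thom-AHSSs} reduce the problem to showing collapse of the spectral sequence for $\Thom(\vb{U}^\dual)$. The geometric input I would use is the closed embedding $\GrSp{n-1}{2n-2} \hookrightarrow \GrSp{n}{2n}$ given by Lagrangians of $\C^{2n}$ containing a fixed isotropic line $\ell$; a direct tangent-space calculation identifies its normal bundle in terms of $\vb{U}^\dual$, yielding a cofibration sequence playing the role of~\eqref{seq:Gr-cofib1}. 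Combined with the untwisted collapse, this enables an inductive application of Corollary~\ref{cor:Xcollaps-Tcollapse} to establish twisted collapse, after which Corollary~\ref{cor:2-torsion_of_Thom-KO} translates the $d'$-cohomology into the stated values of $s_i(\OO(1))$. The main obstacle is the collapse in the twisted case: pinning down the normal bundle of the geometric embedding and verifying that the associated map on $\Sq^2$-cohomology is sufficiently injective requires careful bookkeeping analogous to that in the proof of Lemma~\ref{lem:Sq-cohomology-of-Amn}.
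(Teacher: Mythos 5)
Your proposal is correct and follows the same overall template as the paper: use the exterior-algebra structure of $H^*(\GrSp{n}{2n};\Z/2)$, reduce the twist to a specific Thom space via Lemma~\ref{lem:KOtwists} and Corollary~\ref{cor:Identifaction_of_Thom-AHSSs}, compute the $\Sq^2$- and $d'$-cohomology, and then establish collapse of the Atiyah--Hirzebruch spectral sequence for the Thom space via the cofibration coming from the embedding of one symplectic Grassmannian into the next (Lemma~\ref{lem:GrSp-Geometry} and Corollary~\ref{cor:GrSp-CoSeq}) together with Corollary~\ref{cor:Xcollaps-Tcollapse}, finally invoking Corollary~\ref{cor:2-torsion_of_Thom-KO}. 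The one genuine methodological variation is in the intermediate cohomology computation: where the paper derives $H^*(\GrSp{n}{2n},\Sq^2+c_1)$ from Kono and Hara's known untwisted answer via the short exact sequence of $\Sq^2$-differential modules~\eqref{seq:GrSp_ses}, you propose computing it directly from the Wu formula $\Sq^2(c_k)=c_1c_k+(k-1)c_{k+1}$, which gives $d'(c_k)=(k-1)c_{k+1}$. Both routes work; yours is more elementary but incurs the combinatorial bookkeeping you acknowledge (and have only verified explicitly for $n=3$), while the paper's short exact sequence simultaneously delivers the injectivity of $p^*$ needed for the collapse argument, whereas you would have to extract that separately, e.g.\ by a dimension count across the long exact $\Sq^2$-cohomology sequence. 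A small caveat on the geometric input: what actually enters the cofibration is that the \emph{open complement} of the closed $\GrSp{n-1}{2n-2}$ (Lagrangians containing $\ell$) inside $\GrSp{n}{2n}$ is the total space of $\vb{U}^\perp\oplus\OO\cong\vb{U}^\dual\oplus\OO$ over another copy of $\GrSp{n-1}{2n-2}$, rather than only the normal bundle of the closed stratum, and the Thom space that appears is over $\GrSp{n-1}{2n-2}$, not over $\GrSp{n}{2n}$; in this symmetric situation the two bundles coincide, so the computation comes out the same, but the distinction matters for setting up Corollary~\ref{cor:Xcollaps-Tcollapse} correctly. Your observation that the symplectic form gives $\vb{U}^\perp\cong\vb{U}^\dual$ is correct and a nice simplification, though by itself $c(\vb U)^2=1$ only shows $c_i^2=0$; that there are no further relations is usually cited as known (and confirmed by the cell count $2^n$).
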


It turns out to be convenient to work with the vector bundle \mbox{\( \vb{U}^{\perp}\oplus\OO \)} for the computation of the twisted groups \( \KO^*(\GrSp{n}{2n}; \OO(1)) \). Namely, we have the following analogue of Lemma~\ref{lem:Gr-Geometry}.

\begin{lem}\label{lem:GrSp-Geometry}
There is an open embedding of the bundle \mbox{ \( \vb{U}^{\perp}\oplus\OO \)} over the symplectic Grassmannian \( \GrSp{n}{2n} \) into the symplectic Grassmannian \( \GrSp{n+1}{2n+2} \) whose closed complement is again isomorphic to \( \GrSp{n}{2n} \).
\end{lem}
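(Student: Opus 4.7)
The plan is to produce explicit descriptions of both the closed subvariety and its open complement inside \( \GrSp{n+1}{2n+2} \), and then identify the complement as a vector bundle. Fix a symplectic orthogonal decomposition \( \C^{2n+2} = \C^{2n} \oplus \langle e, f\rangle \) in which \( \langle e,f\rangle \) is a hyperbolic plane, i.e.\ \( \omega(e,f) = 1 \). The closed embedding \( \GrSp{n}{2n}\hookrightarrow\GrSp{n+1}{2n+2} \) is then given by \( L\mapsto L\oplus\langle e\rangle \); its image is the set \( C = \{\Lambda : e \in \Lambda\} \). To see \( C\cong\GrSp{n}{2n} \): any Lagrangian containing \( e \) must lie in \( e^{\perp} = \C^{2n}\oplus\langle e\rangle \), and \( \Lambda/\langle e\rangle \) is then a Lagrangian \( n \)-plane in \( \C^{2n} \).

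Next I would set \( U := \{\Lambda : e\notin\Lambda\} \) and construct a natural map \( \pi\colon U\to\GrSp{n}{2n} \) sending \( \Lambda \) to the image of \( \Lambda\cap e^{\perp} \) in \( e^{\perp}/\langle e\rangle = \C^{2n} \). For \( \Lambda\in U \), the intersection \( \Lambda\cap e^{\perp} \) has dimension exactly \( n \) and projects injectively: the alternative \( \Lambda\subseteq e^{\perp} \) would make \( \Lambda + \langle e\rangle \) an isotropic subspace of dimension \( n+2 \), contradicting maximality of Lagrangians. The resulting subspace is then visibly Lagrangian in \( \C^{2n} \).

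The key step is to exhibit \( \pi \) as the projection \( \vb{U}^{\perp}\oplus\OO\to\GrSp{n}{2n} \) via the explicit parametrisation
\begin{equation*}
 \Phi\colon (L,w,c) \;\longmapsto\; \mathrm{span}\{x + \omega(w,x)\,e : x \in L\} \;+\; \C\cdot(f+w+ce),
\end{equation*}
for \( w\in\vb{U}^{\perp}_L\subset\C^{2n} \) and \( c\in\C \). A short computation using \( \omega(L,L) = 0 \), \( \omega(\C^{2n},\langle e,f\rangle) = 0 \) and \( \omega(e,f) = 1 \) shows that \( \Phi(L,w,c) \) is a Lagrangian \( (n+1) \)-plane not containing \( e \), and that \( \pi\circ\Phi = \mathrm{id} \). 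For the inverse, given \( \Lambda\in U \) one recovers \( L = \pi(\Lambda) \); the intersection \( \Lambda\cap e^{\perp} \) is the graph of a functional \( \phi\in L^{\vee} \); via the isomorphism \( \C^{2n}/L\xrightarrow{\cong} L^{\vee} \), \( \bar w\mapsto\omega(\bar w, -)|_L \), which is an isomorphism precisely because \( L \) is Lagrangian, \( \phi \) corresponds to a unique \( \bar w \), whose Hermitian lift singles out \( w\in\vb{U}^{\perp}_L \); finally \( c \) is determined as the \( e \)-component of the unique vector \( f + w + ce \) lying in \( \Lambda \).

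The main obstacle will be the clean handling of the scalar \( c \): the choice of representative in \( \vb{U}^{\perp}_L \) uses the Hermitian splitting \( \C^{2n} = \vb{U}\oplus\vb{U}^{\perp} \), which is natural topologically but not algebraic. Since the lemma is only needed for the Atiyah--Hirzebruch computation of \( \KO^*(\GrSp{n}{2n};\OO(1)) \), this causes no real difficulty. As a sanity check on the rank, the normal bundle of \( C \) may be computed from \( T_{\Lambda}\GrSp{n+1}{2n+2} = \mathrm{Sym}^2\Lambda^{\vee} \): at \( \Lambda = L\oplus\langle e\rangle \) this splits as \( \mathrm{Sym}^2 L^{\vee}\oplus(L^{\vee}\otimes\langle e\rangle^{\vee})\oplus\mathrm{Sym}^2\langle e\rangle^{\vee} \), in which the first summand is tangent to \( C \) while the last two contribute \( L^{\vee}\oplus\C\cong\vb{U}^{\perp}\oplus\OO \), matching the claim.
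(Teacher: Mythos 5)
Your proposal is correct and follows essentially the same route as the paper: fix a hyperbolic extension $\C^{2n+2} = \C^{2n}\perp\langle e,f\rangle$, embed $\GrSp{n}{2n}$ as the Lagrangians containing $e$, and parametrise the open complement by the graph construction, with $w\in\vb{U}^{\perp}_L$ encoding the functional $\omega(w,-)|_L$ and $c$ the residual scalar (the paper writes the same map as the graph of the matrix $\left(\begin{smallmatrix}z & Q(-,v)\\ v & 0\end{smallmatrix}\right)\colon\langle e_1\rangle\oplus\Lambda\to\langle e_2\rangle\oplus\Lambda^{\perp}$). Your remark that the choice of representative in $\vb{U}^{\perp}_L$ is hermitian rather than algebraic is exactly the caveat the paper also flags, and the normal-bundle computation via $T_\Lambda = \mathrm{Sym}^2\Lambda^{\vee}$ is a clean sanity check not present in the original.
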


\begin{proof}
To fix notation, let \( e_1,e_2 \) be the first two canonical basis vectors of \( \C^{2n+2} \), and embed \( \C^{2n} \) into \( \C^{2n+2} \) via the remaining coordinates. Assuming \( \GrSp{n}{2n} \) is defined in terms of a skew-symmetric form \( Q_{2n} \), define \( \GrSp{n+1}{2n+2} \) with respect to the form
\begin{align*}
 Q_{2n+2}:=\begin{pmatrix}
            0  & 1  & 0\\
            -1 & 0  & 0\\
            0  & 0  & Q_{2n}
           \end{pmatrix}
\end{align*}
Then we have embeddings \( i_1 \) and \( i_2 \) of \( \GrSp{n}{2n} \) into \( \GrSp{n+1}{2n+2} \) sending an \( n \)-plane \( \Lambda\subset\C^{2n} \) to \( e_1\oplus\Lambda \) or \( e_2\oplus\Lambda \) in \( \C^{2n+2} \), respectively.

We extend \( i_1 \) to an embedding of \( \vb{U}^{\perp}\oplus\OO \) by sending an \( n \)-plane \( \Lambda\in\GrSp{n}{2n} \) together with a vector \( v \) in \( \Lambda^{\perp}\subset\C^{2n} \) and a complex scalar \( z \) to the graph \( \Gamma_{\Lambda,v,z}\subset\C^{2n+2} \) of the linear map
\begin{equation*}
 \begin{pmatrix}
  z & Q_{2n}(-,v) \\
  v & 0
 \end{pmatrix}
 \colon{
  \left<e_1\right>\oplus\Lambda \rightarrow \left<e_{2}\right>\oplus\Lambda^{\perp}
 }
\end{equation*}
To avoid confusion, we emphasize that \( v \) is orthogonal to \( \Lambda \) with respect to a {\em hermitian} metric on \( \C^{2n} \). The value of \( Q_{2n}(-,v) \), on the other hand, may well be non-zero on \( \Lambda \). Consider the above embedding of \( \vb{U}^{\perp}\oplus\OO \) together with the embedding \( i_2 \):
\begin{equation*}
 \xymatrix@R=0pt{
  {\vb{U}^{\perp}\oplus\OO} \ar@{}[r]|{\longhookrightarrow}      & {\GrSp{n+1}{2n+2}}                        &  {\GrSp{n}{2n}} \ar@{}[l]|{\longhookleftarrow}_{i_2} \\
  {(\Lambda,v,z) }         \ar@{}[r]|{\mapsto}                     & {\Gamma_{\Lambda,v,z}}                                                     \\
                                                                   & {\left<e_2\right>\oplus\Lambda}       & \ar@{}[l]|{\mapsfrom} {\Lambda}
 }
\end{equation*}
To see that the two embeddings are complementary, take an arbitrary \( (n+1) \)-plane \( W \) in \( \GrSp{n+1}{2n+2} \).
If \( e_2\in W \) then we can consider a basis
\begin{equation*}
 e_2,\mm{a_1\\0\\v_1}, \dots, \mm{a_n\\0\\v_n}
\end{equation*}
of \( W \), and the fact that \( Q_{2n+2} \) vanishes on \( W \) implies that all \( a_i \) are zero. Thus \( W \) can be identified with \( i_2(\left<v_1,\dots,v_n\right>) \).

If, on the other hand, \( e_2 \) is not contained in \( W \) then we must have a vector of the form
\( {}^t(1, z', v') \)
in \( W \), for some  \( z'\in\C \) and \( v'\in\C^{2n} \). Extend this vector to a basis of \( W \) of the form
\begin{equation*}
 \mm{1\\z'\\v'},\mm{0\\b_1\\v_1}, \dots, \mm{0\\b_n\\v_n}
\end{equation*}
and let \( \Lambda:=\left<v_1,\ldots,v_n\right> \). The condition that \( Q_{2n+2} \) vanishes on \( W \) implies that \( Q \) vanishes on \( \Lambda \) and that \( b_i=Q_{2n}(v_i,v') \) for each \( i \). In particular, \( \Lambda \) is \( n \)-dimensional.  Moreover, we can replace the first vector of our basis by a vector
\( {}^t(1,z,v) \)
with \( v\in\Lambda^{\perp} \), by subtracting appropriate multiples of the remaining basis vectors. Since \( Q \) vanishes on \( \Lambda \) we have \( Q_{2n}(v_i,v')=Q_{2n}(v_i,v) \) and our new basis has the form
\begin{equation*}
 \mm{1\\z\\v},\mm{0\\Q(v_1, v)\\v_1}, \dots, \mm{0\\Q(v_n, v)\\v_n}
\end{equation*}
This shows that \( W=\Gamma_{\Lambda,v,z} \).
\end{proof}

\begin{cor}\label{cor:GrSp-CoSeq}
 We have a cofibration sequence
 \begin{equation*}
  \GrSp{n}{2n}_+\overset{i}{\hookrightarrow}\GrSp{n+1}{2n+2}_+\overset{p}{\twoheadrightarrow} \Thom_{\GrSp{n}{2n}}(\vb{U}^{\perp}\oplus\OO)
 \end{equation*}
\end{cor}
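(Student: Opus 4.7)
The plan is to read the corollary off Lemma~\ref{lem:GrSp-Geometry} via standard topological manipulations, in the same spirit as the Grassmannian cofibration sequences \eqref{seq:Gr-cofib1} and \eqref{seq:Gr-cofib2} were derived from Lemma~\ref{lem:Gr-Geometry}. Let me write $X := \GrSp{n+1}{2n+2}$ and denote by $Z$ the closed complement supplied by Lemma~\ref{lem:GrSp-Geometry}, namely the image of the embedding $i_2$. The lemma identifies the open complement $U := X \setminus Z$ with the total space of the complex vector bundle $\vb{U}^{\perp} \oplus \OO$ over $\GrSp{n}{2n}$.

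The closed inclusion $Z \hookrightarrow X$ of smooth complex varieties is, after complex realization, a closed cofibration of compact Hausdorff CW complexes. It therefore yields the standard cofibration sequence of pointed spaces
\[
  Z_+ \;\hookrightarrow\; X_+ \;\twoheadrightarrow\; X/Z,
\]
and the task reduces to identifying $X/Z$ with the claimed Thom space. Because $X$ is compact Hausdorff and $Z$ is closed, $X/Z$ is canonically homeomorphic to the one-point compactification $U^+$. Because $\GrSp{n}{2n}$ is compact and $U$ is the total space of a complex vector bundle over it, $U^+$ is in turn canonically homeomorphic to the Thom space $\Thom_{\GrSp{n}{2n}}(\vb{U}^{\perp} \oplus \OO)$ --- for instance, by choosing a Hermitian metric and identifying $U^+$ with $D(U)/S(U)$. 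Chaining these two identifications produces the desired cofibration sequence.

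There is essentially no obstacle to this approach: once Lemma~\ref{lem:GrSp-Geometry} has been established, the corollary is a purely formal consequence of the compactness of both $X$ and $\GrSp{n}{2n}$. An alternative route via the stable $\A^1$-homotopy category would invoke Morel--Voevodsky's identification $X/(X\setminus Z) \simeq \Thom_Z(\N_{Z/X})$, but that would require computing the normal bundle of $i_2(\GrSp{n}{2n})$ in $X$ separately --- a step one can bypass with the direct topological argument above, in which the embedding of the bundle $\vb{U}^{\perp}\oplus\OO$ already plays the role of a ``normal bundle neighbourhood'' of $Z$.
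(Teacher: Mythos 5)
Your proposal is correct and follows the same route as the paper, which states the corollary without proof as an immediate consequence of Lemma~\ref{lem:GrSp-Geometry}: you are simply filling in the standard compactness argument identifying $X/Z$ with the one-point compactification of the open complement $U$, and $U^+$ with the Thom space of the bundle supplied by the lemma.
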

The associated long exact cohomology sequence splits into a short exact sequence of \( H^*(\GrSp{n+1}{2n+2}) \)-modules since all cohomology here is concentrated in even degrees:
\begin{equation}\label{seq:GrSp_ses}
 0\rightarrow \rH^*(\Thom_{\GrSp{n}{2n}}(\vb{U}^{\perp}\oplus\OO)) \overset{p^*}{\rightarrow} H^*(\GrSp{n+1}{2n+2})\overset{i^*}{\rightarrow} H^*(\GrSp{n}{2n})\rightarrow 0
\end{equation}
\begin{lem} Let \( c_i \) denote the \( i^{\text{th}} \) Chern classes of \( \vb{U} \) over \( \GrSp{n}{2n} \). We have
  \begin{align*}
   & H^*(\GrSp{n}{2n},\Sq^2)=\begin{cases}
                               \Lambda(a_1,a_5,a_9,\ldots,a_{4m-3}) & \text{ if \( n=2m \)}  \\
                               \Lambda(a_1,a_5,a_9,\ldots,a_{4m-3},a_{4m+1}) & \text{ if \( n=2m+1 \)}
                             \end{cases}\\
   & H^*(\GrSp{n}{2n},{\Sq^2}+c_1)=\begin{cases}
                                    \Lambda(a_1,a_5,\ldots,a_{4m-3})\cdot c_{2m} & \text{ if \( n=2m \)}\\
                                    0                                            & \text{ if \( n \) is odd}
                                   \end{cases}
  \end{align*}
  for certain generators \( a_i \) of degree \( 2i \).
\end{lem}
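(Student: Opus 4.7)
The plan is to work in the mod-$2$ cohomology ring $H^*(\GrSp{n}{2n}; \Z/2) = \Lambda_{\Z/2}(c_1, \ldots, c_n)$ with $|c_i| = 2i$. This Borel presentation follows from the Chern class identity $c(\vb{U})\cdot c(\vb{U}^\dual) = 1$, which holds because the symplectic form identifies $\vb{U}^\perp$ with $\vb{U}^\dual$, and which reduces mod $2$ to $c_i^2 = 0$. Since the ring is concentrated in even degrees and integrally torsion-free, $\Sq^1$ vanishes on it; consequently $\Sq^2$ acts as a derivation, and the Wu formula specializes to $\Sq^2(c_i) = c_1 c_i + (i-1)c_{i+1}$ mod~$2$.

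For the untwisted cohomology, I would introduce the change of variables $\tilde c_1 := c_1$, $\tilde c_{2k} := c_{2k}$, and $\tilde c_{2k+1} := c_{2k+1} + c_1 c_{2k}$ for $k \geq 1$. A direct calculation shows that $d := \Sq^2$ acts by $d\tilde c_1 = 0$, $d\tilde c_{2k} = \tilde c_{2k+1}$ whenever $2k+1 \leq n$, $d\tilde c_{2k+1} = 0$, and only in the case $n = 2m$ is even, $d\tilde c_n = \tilde c_1 \tilde c_n$. The complex therefore decomposes as a tensor product of Koszul pairs $\Lambda(\tilde c_{2k}, \tilde c_{2k+1})$, each with cohomology $\Lambda(\tilde c_{2k}\tilde c_{2k+1})$, together with either a lone factor $\Lambda(\tilde c_1)$ when $n$ is odd, or an extra factor $\Lambda(\tilde c_1, \tilde c_n)$ with cohomology $\Lambda(\tilde c_1)$ when $n$ is even. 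Setting $a_1 := \tilde c_1$ and $a_{4k+1} := \tilde c_{2k}\tilde c_{2k+1}$ gives the first part of the lemma.

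For the twisted cohomology, the key observation is that multiplication by $c_{n+1}$ defines an injection of chain complexes
\[ (\Lambda(c_1,\ldots,c_n),\; \Sq^2 + c_1) \hookrightarrow (\Lambda(c_1,\ldots,c_{n+1}),\; \Sq^2) \]
shifting degree by $2(n+1)$. Indeed, $\Sq^2(c_{n+1}) = c_1 c_{n+1}$ in $\Lambda(c_1,\ldots,c_{n+1})$ since there is no $c_{n+2}$, so $\Sq^2(c_{n+1}\, x) = c_{n+1}(\Sq^2+c_1)(x)$. Its image is the ideal $(c_{n+1})$, which by dimension count coincides with the kernel of the restriction $i^*\colon \Lambda(c_1,\ldots,c_{n+1}) \twoheadrightarrow \Lambda(c_1,\ldots,c_n)$ killing $c_{n+1}$. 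The resulting short exact sequence of complexes is precisely the mod-$2$ cohomology of the cofibration in Corollary~\ref{cor:GrSp-CoSeq}, once Lemma~\ref{lem:Sq-on-Thom} is applied to $\vb{U}^\perp \oplus \OO$ (whose first Chern class equals $c_1$ mod~$2$, since $\vb{U}\oplus\vb{U}^\perp$ is trivial).

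The twisted result will then fall out of the long exact sequence once $i^*$ is understood on cohomology. Each generator $a_{4k+1} = \tilde c_{2k}\tilde c_{2k+1}$ simplifies to $c_{2k}c_{2k+1}$ (as $c_1 c_{2k}^2 = 0$) and thus involves only $c_1,\ldots,c_{2k+1}$; hence $i^*$ preserves $a_{4k+1}$ when $2k+1 \leq n$ and kills it otherwise. When $n = 2m+1$ is odd, $i^*$ is the identity on $\Lambda(a_1, a_5, \ldots, a_{4m+1})$, and exactness forces $H^*(\GrSp{n}{2n}, \Sq^2 + c_1) = 0$. When $n = 2m$ is even, $i^*$ is the surjection $\Lambda(a_1, \ldots, a_{4m+1}) \twoheadrightarrow \Lambda(a_1, \ldots, a_{4m-3})$ killing $a_{4m+1}$; its kernel $a_{4m+1}\cdot\Lambda(a_1, \ldots, a_{4m-3})$, after subtracting the Thom shift $2(n+1) = 4m+2$, sits in the same degrees as $\Lambda(a_1, \ldots, a_{4m-3})$ multiplied by a class of degree $4m = |c_{2m}|$, matching the claim $\Lambda(a_1, \ldots, a_{4m-3})\cdot c_{2m}$. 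The main subtlety is the untwisted argument in the even case, where $\tilde c_n$ must be paired with $\tilde c_1$ (the factor $\Lambda(\tilde c_1, \tilde c_n)$ with $d\tilde c_n = \tilde c_1 \tilde c_n$); once that is handled, the cofibration argument for the twisted case is essentially formal diagram chasing.
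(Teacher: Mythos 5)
Your proof is correct, and for the twisted cohomology it takes essentially the paper's route: both arguments exploit the short exact sequence of differential modules coming from the cofibration in Corollary~\ref{cor:GrSp-CoSeq}, then read off $H^*(\GrSp{n}{2n},\Sq^2+c_1)$ from the resulting long exact sequence (it splits because $i^*$ is surjective on $\Sq^2$-cohomology). Where you genuinely diverge is in the untwisted computation: the paper simply cites Kono--Hara for $H^*(\GrSp{n}{2n},\Sq^2)$, whereas you derive it directly via the change of variables $\tilde c_{2k+1}=c_{2k+1}+c_1c_{2k}$ and a decomposition into Koszul pairs $\Lambda(\tilde c_{2k},\tilde c_{2k+1})$ with $d\tilde c_{2k}=\tilde c_{2k+1}$ (plus the extra factor $\Lambda(\tilde c_1,\tilde c_n)$ when $n$ is even). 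This makes the argument self-contained; as the paper's footnote indicates, it is in fact the same substitution Kono and Hara use, so you have re-derived the cited result rather than found a truly different route. Two small remarks. First, you set up the short exact sequence purely algebraically (multiplication by $c_{n+1}$ is a chain map from the $(\Sq^2+c_1)$-complex), which is cleaner than the paper's detour through the Thom class $\theta$; both versions verify the same identity $\Sq^2(c_{n+1}x)=c_{n+1}(\Sq^2+c_1)(x)$. Second, in the even case you identify the twisted cohomology with $\Lambda(a_1,\ldots,a_{4m-3})\cdot c_{2m}$ only by matching degrees; the paper is slightly more explicit, noting that $p^*(c_{2m}\theta)=c_{2m}c_{2m+1}=a_{4m+1}$ so that $c_{2m}\theta$ is actually the cyclic generator. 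This is a one-line addition and not a gap, but it is what makes the appearance of $c_{2m}$ in the statement canonical rather than merely consistent with the dimension count.
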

\begin{proof}
Consider the short exact sequence~\eqref{seq:GrSp_ses}.
The mod-2 cohomology of \( \GrSp{n}{2n} \) is an exterior algebra on the Chern classes \( c_i \) of \( \vb{U} \),
\begin{equation*}
 H^*(\GrSp{n}{2n};\Z/2)=\Lambda(c_1,c_2,\ldots,c_n)
\end{equation*}
and \( i^* \) is given by sending \( c_{n+1} \) to zero.
Thus, \( p^* \) is the unique morphism of \( H^*(\GrSp{n+1}{2n+2};\Z/2) \)-modules that sends the Thom class \( \theta \) of \( \vb{U}^{\perp}\oplus\OO \) to \( c_{n+1} \).

This short exact sequence induces a long exact sequence of cohomology groups with respect to the Steenrod square \( \Sq^2 \). The algebra \( H^*(\GrSp{n}{2n},\Sq^2) \) was computed in \cite{KonoHara:HSS}*{2--2}, with the result displayed above, so we already know two thirds of this sequence. Explicitly, we have \( a_{4i+1}=c_{2i}c_{2i+1} \),\footnote{In \cite{KonoHara:HSS} the generators are written as \( c_{2i}c'_{2i+1} \) with \( c'_{2i+1}=c_{2i+1}+c_1c_{2i} \).} so \( i^* \) is the obvious surjection sending \( a_i \) to \( a_i \) (or to zero). Thus, the long exact sequence once again splits.

If \( n=2m \) we obtain a short exact sequence
\begin{multline*}
 0\rightarrow H^*(\GrSp{2m}{4m},{\Sq^2}+c_1)\cdot\theta \overset{p^*}{\rightarrow} \Lambda(a_1,\ldots,a_{4m-3},a_{4m+1}) \\ \overset{i^*}{\rightarrow}\Lambda(a_1,\ldots,a_{4m-3})\rightarrow 0
\end{multline*}
We see that \( H^*(\GrSp{2m}{4m},{\Sq^2}+c_1)\cdot\theta \) is isomorphic to \mbox{\( \Lambda(a_1,\ldots,a_{4m-3})\cdot a_{4m+1} \)} as a  \( \Lambda(a_1,\ldots,a_{4m+1}) \)-module.
It is thus generated by a single element, which is the unique element of degree \( 8m+2 \).
Since \( p^*(c_{2m}\theta)=a_{4m+1} \), the class of \( c_{2m}\theta \) is the element we are looking for, and the result displayed above follows.

If, on the other hand, \( n \) is odd, then \( i^* \) is an isomorphism and \( H^*(\GrSp{n}{2n},{{\Sq^2}+c_1}) \) must be trivial.
\end{proof}

We see from the proof that \( p^* \) induces an injection of \mbox{\( H^*(\GrSp{n}{2n},{\Sq^2}+c_1)\cdot\theta \)} into \mbox{\( H^*(\GrSp{n}{2n},\Sq^2) \)}. Since we already know from \cite{KonoHara:HSS}*{Theorem~2.1} that the Atiyah-Hirzebruch spectral sequence for \( \KO^*(\GrSp{n}{2n}) \) collapses, we can apply Corollary~\ref{cor:Xcollaps-Tcollapse} to deduce that the spectral sequence for \mbox{\( \rKO^*(\Thom_{\GrSp{n}{2n}}(\vb{U}^{\perp}\oplus\OO)) \)} collapses at the \( E_3 \)-page as well.
This completes the proof of Theorem~\ref{thm:GrSp-KO}.

\subsection{Quadrics}\label{sec:eg:Q}
We next consider smooth complex quadrics \( Q^n \) in \( \P^{n+1} \). As far as we are aware, the first complete results on (shifted) Witt groups of split quadrics were due to Walter: they are mentioned together with the results for projective bundles in \cite{Walter:TGW} as the main applications of that paper. Unfortunately, they seem to have remained unpublished. Partial results are also included in Yagita's preprint \cite{Yagita:Q}, see~Corollary~8.3. More recently, Nenashev obtained almost complete results by considering the localization sequences arising from the inclusion of a linear subspace of maximal dimension \cite{Nenashev:Q}. Calm{\`e}s informs me that the geometric description of the boundary map given in \cite{BalmerCalmes:GeometricBoundary} can be used to show that these localization sequences split in general, yielding a complete computation. The calculation described here is completely independent of these results.

For \( n\geq3 \) the Picard group of \( Q^n \) is free abelian on a single generator given by the restriction of the universal line bundle \( \OO(1) \) over \( \P^{n+1} \). We will use the same notation \( \OO(1) \) for this restriction.

\begin{thm}\label{thm:Q-KO}
The KO-theory of a smooth complex quadric \( Q^n \) of dimension \( n\geq3 \) is as described in Table~\ref{table:results:Q}.

\begin{table}[btp]
\begin{center}
\begin{tabular}{>{\( }l<{ \)}|MM|MMMM|MMMM}
\toprule
\KO^*(Q^n;\lb{L}) &         &         & \multicolumn{4}{M|}{\vb{L}\equiv\OO}  & \multicolumn{4}{M}{\vb{L}\equiv\OO(1)} \\
                 & t_0     & t_1     & s_0 & s_1 & s_2 & s_3                & s_0 & s_1 & s_2 & s_3 \\
\midrule
n\equiv 0 \mod 8 & (n/2)+2 & n/2     & 2 & 0 & 0 & 0                        & 2 & 0 & 0 & 0 \\
n\equiv 1        & (n+1)/2 & (n+1)/2 & 1 & 1 & 0 & 0                        & 1 & 1 & 0 & 0 \\
n\equiv 2        & (n/2)+1 & (n/2)+1 & 1 & 2 & 1 & 0                        & 0 & 0 & 0 & 0 \\
n\equiv 3        & (n+1)/2 & (n+1)/2 & 1 & 1 & 0 & 0                        & 0 & 0 & 1 & 1 \\
\hline
n\equiv 4        & \minrowheight{\( () \)}(n/2)+2 & n/2   & 2 & 0 & 0 & 0                        & 0 & 0 & 2 & 0 \\
n\equiv 5        & (n+1)/2 & (n+1)/2 & 1 & 0 & 0 & 1                        & 0 & 1 & 1 & 0 \\
n\equiv 6        & (n/2)+1 & (n/2)+1 & 1 & 0 & 1 & 2                        & 0 & 0 & 0 & 0 \\
n\equiv 7        & (n+1)/2 & (n+1)/2 & 1 & 0 & 0 & 1                        & 1 & 0 & 0 & 1 \\
\bottomrule
\end{tabular}
\caption{KO-groups of projective quadrics (\( n\geq 3 \))}
\label{table:results:Q}
\end{center}
\end{table}
\end{thm}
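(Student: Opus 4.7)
The plan is to follow the template of the preceding examples (Sections~\ref{subsec:P}--\ref{sec:eg:symplecticGr}). The ranks \(t_0\) and \(t_1\) are extracted from the Bruhat decomposition of \(Q^n\), which has \(n+1\) cells for \(n\) odd and \(n+2\) cells for \(n\) even (the additional cell arising from the second family of maximal isotropic subspaces in the middle complex dimension). Sorting these by real dimension modulo~\(4\) yields the \(t_0,t_1\) columns of Table~\ref{table:results:Q}; by Lemma~\ref{lem:KOtwists}, the ranks are independent of the twist.

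For the torsion parts \(s_i\), I would apply Lemma~\ref{lem:2-torsion_of_KO} in the untwisted case and Corollary~\ref{cor:2-torsion_of_Thom-KO} in the twisted case, reducing the problem to computing the cohomologies \(H^*(Q^n,\Sq^2)\) and \(H^*(Q^n,\Sq^2+h)\), where \(h\in H^2(Q^n;\Z/2)\) is the hyperplane class. Recall that \(H^*(Q^n;\Z/2)\) is generated by \(h\) together with an additional middle-dimensional Schubert class \(\ell\), subject to explicit ring relations. The action \(\Sq^2(h)=h^2\) follows from \(|h|=2\), while \(\Sq^2(\ell)\) can be computed via Wu's formula from the total Stiefel-Whitney class \((1+h)^{n+2}\) of \(Q^n\). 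On the polynomial part one has directly \(\Sq^2(h^k)=k\cdot h^{k+1}\) and \((\Sq^2+h)(h^k)=(k+1)\cdot h^{k+1}\) modulo~\(2\); the cohomology therefore depends in a controlled way on the parity of \(k\) relative to the truncation \(h^{n+1}=0\), with additional contributions coming from \(\ell\) and its products with powers of \(h\).

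The collapse of the Atiyah-Hirzebruch spectral sequence at the \(E_3\)-page is known in the untwisted case from the work of Kono and Hara~\cite{KonoHara:HSS}. For the twisted case, I would establish collapse by induction on \(n\), starting from small cases where \(Q^n\) is itself a Grassmannian (\eg \(Q^3\cong\GrSp{2}{4}\) or \(Q^4\cong\varGr{2}{2}\)) and using Corollary~\ref{cor:Xcollaps-Tcollapse} applied to the map of Thom spaces \(\Thom_{Q^{n-1}}(\OO(1))\to\Thom_{Q^n}(\OO(1))\) induced by the inclusion of a hyperplane section \(Q^{n-1}\hookrightarrow Q^n\) (whose normal bundle is the restriction of \(\OO(1)\)). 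The required injectivity of the induced map on the row \(q\equiv -1 \bmod 8\) of the \(E_3\)-page is straightforward from the ring computation of the previous step.

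The principal technical obstacle will be the case-by-case determination of \(\Sq^2(\ell)\) and of the full differentials \(\Sq^2\) and \(\Sq^2+h\) on the basis \(\{h^i,\,h^j\ell\}\) of \(H^*(Q^n;\Z/2)\): the answers depend on \(n\bmod 8\), which explains the eight rows of Table~\ref{table:results:Q} and reflects the combined effect of the \(\KO\)-theoretic Bott period~\(8\) and the interaction of the twist with the multiplicative structure of \(H^*(Q^n;\Z/2)\). Once these cohomologies are computed, the values of \(s_0,s_1,s_2,s_3\) can be read off using the indexing conventions of Table~\ref{table:eg:notation}, with the usual degree shift by~\(2\) introduced by the Thom class of \(\OO(1)\) in the twisted case, yielding precisely the entries of Table~\ref{table:results:Q}.
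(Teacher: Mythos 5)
Your outline of the overall strategy is right and matches the paper: read off the ranks \(t_0,t_1\) from the cell structure, reduce the torsion to \(H^*(Q^n,\Sq^2)\) and \(H^*(Q^n,\Sq^2+h)\) via Lemma~\ref{lem:2-torsion_of_KO} and Corollary~\ref{cor:2-torsion_of_Thom-KO}, compute those using the well-known ring structure and squaring operations (the paper quotes Ishitoya and Elman--Karpenko--Merkurjev rather than Wu's formula, but that is a cosmetic difference), and then argue that the Atiyah--Hirzebruch spectral sequence collapses at \(E_3\). The untwisted collapse is taken from Kono--Hara, exactly as you say.

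However, the mechanism you propose for the twisted collapse has a genuine gap. You want to apply Corollary~\ref{cor:Xcollaps-Tcollapse} to the map \(p\colon\Thom_{Q^{n-1}}\OO(1)\to\Thom_{Q^n}\OO(1)\) induced by a hyperplane section. That corollary requires \(p^*\colon H^*(T,\Sq^2)\hookrightarrow H^*(X,\Sq^2)\) to be injective with \(T=\Thom_{Q^n}\OO(1)\) and \(X=\Thom_{Q^{n-1}}\OO(1)\). But this fails for exactly the cases that matter, namely \(n\) odd. When \(n\equiv 3\pmod4\) one has \(n-1\equiv 2\pmod4\), so \(\rH^*(\Thom_{Q^{n-1}}\OO(1),\Sq^2)=0\) by Lemma~\ref{lem:Q-KO_twisted}, while \(\rH^*(\Thom_{Q^n}\OO(1),\Sq^2)\) is two-dimensional; injectivity is impossible. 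When \(n\equiv 1\pmod4\), the top class \(ax^m\theta\) of \(\rH^{2n+2}(\Thom_{Q^n}\OO(1),\Sq^2)\) has no counterpart in \(\rH^*(\Thom_{Q^{n-1}}\OO(1),\Sq^2)\) (which is concentrated in degree \(n+1\)), so again injectivity fails. Mapping the other way, from \(Q^{n+1}\) onto the Thom space, does not help either, because for \(n\equiv 3\pmod4\) the generator of \(\rH^{n+3}(\Thom_{Q^n}\OO(1),\Sq^2)\) has no preimage in \(H^*(Q^{n+1},\Sq^2)\). The missing ingredient is a geometric fact the paper exploits: \(\Thom_{Q^n}\OO(1)\) is the projective cone over \(Q^n\) in \(\P^{n+2}\), and that cone arises as the intersection of a smooth quadric \(Q^{n+2}\subset\P^{n+3}\) with its projective tangent space at the vertex. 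This gives an inclusion \(i\colon\Thom_{Q^n}\OO(1)\hookrightarrow Q^{n+2}\), and the paper shows that the potential source of the higher differential (\(x^m\theta\) for \(n\equiv 1\), \(a\theta\) for \(n\equiv 3\)) lies in the image of \(i^*\). Since the spectral sequence for \(Q^{n+2}\) collapses, these classes cannot support a nontrivial differential, and the claim follows. Without this embedding \emph{upward} into a larger quadric, the induction you propose does not close.
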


\paragraph{Untwisted KO-groups.} Before turning to \( \KO^*(Q^n;\OO(1)) \) we review the initial steps in the computation of the untwisted KO-groups. The integral cohomology of \( Q^n \) is well-known:

If \( n \) is even, write \( n=2m \).
We have a class \( x \) in \( H^2(Q^n) \) given by a hyperplane section, and two classes \( a \) and \( b \) in \( H^n(Q^n) \) represented by linear subspaces of \( Q \) of maximal dimension. These three classes generate the cohomology multiplicatively, modulo the relations
\begin{align*}
 &x^m=a+b
 &&x^{m+1}=2ax \\
 &ab=\begin{cases}
     0    & \text{ if \( n\equiv 0 \)}\\
     ax^m & \text{ if \( n\equiv 2 \)}
    \end{cases}
 &&a^2=b^2=\begin{cases}
           ax^m & \text{ if \( n\equiv 0 \mod 4 \)}\\
           0    & \text{ if \( n\equiv 2 \mod 4 \)}
          \end{cases}
\end{align*}
Additive generators can thus be given as follows:
\begin{center}
\begin{tabular}[t]{M|MMMMMMMMMM}
d        & 0 & 2 & 4   & \dots & n-2     & n   & n+2 & n+4  & \dots & 2n    \\
\midrule
H^d(Q^n) & 1 & x & x^2 & \dots & x^{m-1} & a,b & ax  & ax^2 & \dots & ax^m
\end{tabular}
\end{center}
If \( n \) is odd, write \( n=2m+1 \). Then similarly multiplicative generators are given by the class of a hyperplane section \( x \) in \( H^2(Q^n) \) and the class of a linear subspace \( a \) in \( H^{n+1}(Q^n) \) modulo the relations \( x^{m+1}=2a \) and \( a^2 = 0 \).
\begin{center}
\begin{tabular}[t]{M|MMMMMMMMMM}
d        & 0 & 2 & 4   & \dots & n-1   & n+1 & n+3 & n+5  & \dots & 2n    \\
\midrule
H^d(Q^n) & 1 & x & x^2 & \dots & x^{m} & a   & ax  & ax^2 & \dots & ax^m
\end{tabular}
\end{center}
The action of the Steenrod square on \( H^{\ast}(Q^n;\Z/2) \) is also well-known; see for example \cite{Ishitoya:Squaring}*{Theorem~1.4 and Corollary~1.5} or \cite{EKM:Q}*{\S~78}:
\begin{align*}
 \Sq^2(x) &= x^2\\
 \Sq^2(a) &=\begin{cases}
                       ax & \text{ if \( n\equiv 0 \) or \( 3 \mod 4 \)} \\
                       0  & \text{ if \( n\equiv 1 \) or \( 2 \)}
                      \end{cases}\\
 \Sq^2(b) &= \Sq^2(a) \quad (\text{for even \( n \)})
\end{align*}

As before, we write \( H^{\ast}(Q^n,\Sq^2) \) for the cohomology of \( H^{\ast}(Q^n;\Z/2) \) with respect to the differential \( \Sq^2 \).
\begin{lem}\label{cor:Q-KO_untwisted}
Write \( n=2m \) or \( n=2m+1 \) as above. The following table gives a complete list of the additive generators of \( H^{\ast}(Q^n,\Sq^2) \).
\begin{center}
  \begin{tabular}[h]{r|MMMMMMMl}
  \( d \)              & 0 & \dots & n-1 & n   & n+1 & \dots & 2n   & \\
  \midrule
  \( H^d(Q^n,\Sq^2) \) & 1 &       &     &     &     &       & ax^m & if \( n\equiv 0 \mod 4 \)\\
                   & 1 &       &     &     & a   &       &      & if \( n\equiv 1 \)\\
                   & 1 &       &     & a,b &     &       & ab   & if \( n\equiv 2 \)\\
                   & 1 &       & x^m &     &     &       &      & if \( n\equiv 3 \)
  \end{tabular}
\end{center}
\end{lem}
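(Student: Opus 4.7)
My plan is a direct computation of the $\Sq^2$-cohomology of the finite-dimensional $\Z/2$-algebra $H^*(Q^n;\Z/2)$, whose ring structure and Steenrod-square action have just been recalled. Every basis element is the mod-$2$ reduction of an integral class, so $\Sq^1$ vanishes and the Cartan formula collapses to $\Sq^2(yz) = \Sq^2(y)\cdot z + y\cdot\Sq^2(z)$. Combined with $\Sq^2(x)=x^2$ this gives $\Sq^2(x^k) = k\,x^{k+1}$ mod $2$, and together with the stated values of $\Sq^2(a)$ and $\Sq^2(b)$ it fully determines $\Sq^2$ on the preferred additive basis.

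Next I would split the basis into the three blocks suggested by the table of additive generators: the \emph{low block} $1,x,\dots,x^{m-1}$ in degrees $0,\dots,n-2$; the \emph{middle block} in degree $n$ (which is $\{a\}$ for $n$ odd and $\{a,b\}$ for $n$ even); and the \emph{high block} $ax,\dots,ax^m$ in degrees $n+2,\dots,2n$. The differential is almost block-diagonal: it stays within each block, except that $\Sq^2(x^{m-1})=(m-1)\,x^m$ lands in the middle when $n$ is even (using $x^m=a+b$), and $\Sq^2$ on the middle lands in the high block via $\Sq^2(a)$ and $\Sq^2(b)$. At the very top the relation $x^{m+1}=2ax$ forces $ax^{m+1}=0$ in mod-$2$ cohomology, so $\Sq^2(ax^m)=0$ in every case.

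I would then run the computation case by case according to $n\bmod 4$, since the parity of $m$ and the vanishing of $\Sq^2(a),\Sq^2(b)$ depend on this residue. In each case one is reduced to two ``$d(x^k)=kx^{k+1}$''-type subcomplexes, whose kernels and images are determined by parity of the exponent. A short degree-by-degree inspection then yields exactly the survivors listed in the statement: the middle classes $a,b$ persist precisely when $\Sq^2$ vanishes on them and no $x^{m-1}$-boundary kills $a+b$, i.e.\ when $n\equiv 2\bmod 4$; the class $x^m$ at the top of the low block survives precisely when $n\equiv 3\bmod 4$ (where $m-1$ is even, so $\Sq^2(x^{m-1})=0$); the class $ax^m$ at the top of the high block survives when $n\equiv 0\bmod 4$ (where $\Sq^2(ax^{m-1})=m\cdot ax^m$ vanishes because $m$ is even), and again when $n\equiv 2$, in which case it coincides with $ab$ via the ring relation $ab=ax^m$ and is already recorded in the table.

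The main obstacle is purely bookkeeping at the interfaces between the blocks, caused by the relations $x^m=a+b$ and $x^{m+1}=0\bmod 2$: one must track carefully whether $\Sq^2(x^{m-1})$ hits $a+b$ or $0$, and whether $ax^m$ is a boundary of $ax^{m-1}$. These coupling coefficients are what make the answer depend on $n\bmod 4$ rather than just on the parity of $n$; aside from this case split, the calculation is mechanical.
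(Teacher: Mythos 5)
Your proposal is correct, and it supplies exactly the direct $\Sq^2$-cohomology computation that the paper leaves to the reader: the lemma is stated without a proof environment, immediately after the explicit descriptions of the ring $H^*(Q^n;\Z/2)$ and of $\Sq^2$ on its generators, so the intended argument is precisely this bookkeeping. Your block decomposition, the parity analysis of $\Sq^2(x^k)=kx^{k+1}$, the observation that $ax^{m+1}=0$ forces $\Sq^2(ax^m)=0$, and the careful treatment of the interface terms $\Sq^2(x^{m-1})=(m-1)(a+b)$ and $\Sq^2(ax^{m-1})=m\,ax^m$ all check out in each residue class mod 4 (the one small slip is that for $n$ odd the ``low block'' in fact extends up to $x^m$ rather than $x^{m-1}$, but you implicitly correct this when you discuss $x^m$ surviving for $n\equiv 3$).
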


The results of Kono and Hara on \( \KO^*(Q) \) follow from here provided there are no non-trivial higher differentials in the Atiyah-Hirzebruch spectral sequence. This is fairly clear in all cases except for the case \( n\equiv 2 \mod 4 \). In that case, the class \( a+b=x^m \) can be pulled back from \( Q^{n+1} \), and therefore all higher differentials must vanish on \( a+b \). But one has to work harder to see that all higher differentials vanish on \( a \) (or \( b \)).
Kono and Hara proceed by relating the KO-theory of \( Q^n \) to that of the spinor variety \( \GrSOc{\frac{n}{2}+1}{n+2} \) discussed in Section~\ref{sec:eg:spinor}.

\paragraph{Twisted KO-groups.}
We now compute \( \KO^{\ast}(Q^n;\OO(1)) \).

Let \( \theta\in H^2(\Thom_{Q^n}\!\OO(1)) \) be the Thom class of \( \OO(1) \), so that multiplication by \( \theta \) maps the cohomology of \( Q^n \) isomorphically to the reduced cohomology of \( \Thom_{Q^n}\!{\OO(1)} \).
The Steenrod square on \( \rH^*(\Thom_{Q^n}\!\OO(1);\Z/2) \) is determined by Lemma~\ref{lem:Sq-on-Thom}: for any \( y\in H^*(Q^n;\Z/2) \) we have \( \Sq^2(y\cdot\theta)=(\Sq^2{y}+xy)\cdot\theta \). We thus arrive at

\begin{lem}\label{lem:Q-KO_twisted}
The following table gives a complete list of the additive generators of \( \rH^{\ast}(\Thom_{Q^n}\!\OO(1),\Sq^2) \).
 \begin{center}
 \begin{tabular}[h]{r|MMMMMMl}
  \( d \)              &  \dots  & n+1       & n+2               & n+3     & \dots & 2n+2       & \\
  \midrule
  \( \rH^d(\dots) \)   &         &           & a\theta, b\theta  &         &       &            & if \( n\equiv 0 \mod 4 \)\\
                   &         & x^m\theta &                   &         &       & ax^m\theta & if \( n\equiv 1 \)\\
                   &         &           &                   &         &       &            & if \( n\equiv 2 \)\\
                   &         &           &                   & a\theta &       & ax^m\theta & if \( n\equiv 3 \)
 \end{tabular}
 \end{center}
\end{lem}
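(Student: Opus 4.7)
The plan is to apply Lemma~\ref{lem:Sq-on-Thom} to convert the problem into a computation of the mod-\( 2 \) cohomology of \( Q^n \) equipped with the twisted differential \( d' := \Sq^2 + x\cdot \), where \( x\in H^2(Q^n;\Z/2) \) is the hyperplane class (the mod-\( 2 \) reduction of \( c_1(\OO(1)) \)). Under the Thom isomorphism, multiplication by the Thom class \( \theta \) identifies \( H^*(Q^n;\Z/2) \) with \( \rH^{*+2}(\Thom_{Q^n}\OO(1);\Z/2) \) and intertwines \( d' \) with \( \Sq^2 \). So it suffices to exhibit a basis of \( H^*(Q^n, d') \) and then shift degrees up by two.

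All inputs for this computation are already recalled just before Lemma~\ref{cor:Q-KO_untwisted}: the ring structure of \( H^*(Q^n;\Z/2) \) together with the values of \( \Sq^2 \) on \( x \), \( a \) and \( b \). Since \( x \) is the reduction of an integral class, \( \Sq^1 x = 0 \), and the Cartan formula yields \( \Sq^2(x^k) = k\, x^{k+1} \), so that
\[
 d'(x^k) = (k+1)\, x^{k+1}\pmod 2.
\]
Consequently, in the \( x \)-tower the class \( x^k \) with \( k \) even pairs with \( x^{k+1} \) and both are killed, the only exception being the top power (where the relation \( x^m = a+b \) for even \( n \) or \( x^{m+1} = 0 \) for odd \( n \) intervenes).

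For the classes \( a \), \( b \) and their products with powers of \( x \), we proceed by case analysis on \( n\bmod 4 \), using \( \Sq^1 a = \Sq^1 b = 0 \) (again reductions of integral classes) and the Leibniz rule. For example, when \( n\equiv 0\pmod 4 \) we have \( \Sq^2 a = ax \), so \( d' a = ax + xa = 0 \), and similarly \( d' b = 0 \); a direct computation gives \( d'(ax^{2i}) = ax^{2i+1} \) and \( d'(ax^{2i+1}) = 0 \), which together with the vanishing of the \( x \)-tower leaves precisely \( \{a,b\} \) as the only survivors in degree \( n \). When \( n\equiv 1 \) we have \( \Sq^2 a = 0 \), hence \( d' a = xa \), which together with \( d'(ax^i) = ax^{i+1} \) or \( 0 \) leaves \( x^m \) and \( ax^m \) (the truncated ends of two pairing chains), and so on for the remaining two congruence classes.

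The main nuisance is not any conceptual difficulty but careful bookkeeping of the relations among \( a \), \( b \) and \( x \): for even \( n \) one has \( x^m = a+b \), \( ab = \epsilon\, ax^m \), \( a^2 = b^2 = \epsilon'\, ax^m \) with signs \( \epsilon,\epsilon' \) depending on \( n\bmod 4 \); for odd \( n \) one uses \( x^{m+1} = 0 \) and \( a^2 = 0 \). Keeping track of these identities ensures that classes which look formally distinct but actually coincide (like \( ax \) and \( bx \) mod \( 2 \)) are not double-counted, and the four tables in Lemma~\ref{lem:Q-KO_twisted} drop out after multiplication by \( \theta \).
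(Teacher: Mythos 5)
Your approach matches the paper's: apply Lemma~\ref{lem:Sq-on-Thom} to identify \( \rH^*(\Thom_{Q^n}\OO(1),\Sq^2) \) with \( H^*\big(Q^n, {\Sq^2}+x\big) \) up to the degree shift by \( \theta \), then compute the cohomology of \( \big(H^*(Q^n;\Z/2), {\Sq^2}+x\big) \) from the recalled ring structure and the values of \( \Sq^2 \) on \( x \), \( a \), \( b \). The paper itself only records the translation via Lemma~\ref{lem:Sq-on-Thom} and leaves the resulting cohomology computation implicit, so your more explicit working is in exactly the same spirit.

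There is, however, a sign slip in the formula you display for the \( a \)-tower when \( n\equiv 0\pmod 4 \): there \( \Sq^2 a = ax \), so \( \Sq^2(ax^k) = (k+1)\,ax^{k+1} \) and hence \( d'(ax^k) = k\,ax^{k+1} \), giving \( d'(ax^{2i}) = 0 \) and \( d'(ax^{2i+1}) = ax^{2i+2} \) --- the opposite of what you wrote. Taken literally, your formula would make \( d'(a) = ax\neq 0 \), contradicting both your own preceding line (\( d'a=0 \)) and the claimed survival of \( a \) and \( b \). With the corrected parities the pairs \( (ax,ax^2),(ax^3,ax^4),\dots,(ax^{m-1},ax^m) \) all cancel (using that \( m \) is even), leaving exactly \( a,b \) in degree \( n \), as stated. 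The formula you wrote is the one that holds when \( n\equiv 1 \) or \( 2\pmod 4 \), where \( \Sq^2 a=0 \) and so \( d'(ax^k)=(k+1)\,ax^{k+1} \); presumably the two cases were transposed.
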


We claim that all higher differentials in the Atiyah-Hirzebruch spectral sequence for \( \rKO^{\ast}(\Thom_{Q^n}\!\OO(1)) \) vanish.
For even \( n \) this is clear.
But for \( n=8k+1 \) the differential \( d_{8k+2} \) might a priori take \( x^m\theta \) to \( ax^m\theta \),
and for \( n=8k+3 \) the differential \( d_{8k+2} \) might take \( a\theta \) to \( ax^m\theta \).

We therefore need some geometric considerations.
Namely, the Thom space \( \Thom_{Q^n}\!\OO(1) \) can be identified with the projective cone over \( Q^n \) embedded in \( \P^{n+2} \). This projective cone can be realized as the intersection of a smooth quadric \( Q^{n+2}\subset\P^{n+3} \) with its projective tangent space at the vertex of the cone \cite{Harris:AG}*{p.~283}. Thus, we can consider the following inclusions:
\begin{equation*}
 Q^n\overset{j}\hookrightarrow\Thom_{Q^n}\!\OO(1)\overset{i}\hookrightarrow Q^{n+2}
\end{equation*}
The composition is the inclusion of the intersection of \( Q^{n+2} \) with two transversal hyperplanes.

\begin{lem}
 All higher differentials (\( d_k \) with \( k>2 \)) in the Atiyah-Hirzebruch spectral sequence for \( \KO^{\ast}(\Thom_{Q^n}\!\OO(1)) \) vanish.
\end{lem}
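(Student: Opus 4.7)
As indicated in the paragraphs immediately preceding the lemma, the only higher differentials that could conceivably be non-zero in the Atiyah--Hirzebruch spectral sequence for \(\rKO^{*}(\Thom_{Q^n}\OO(1))\) are \(d_{8k+2}(x^{m}\theta)\) when \(n = 8k+1\) and \(d_{8k+2}(a\theta)\) when \(n = 8k+3\). The plan is to exhibit both source classes as pullbacks along the inclusion \(i\colon \Thom_{Q^n}\OO(1) \hookrightarrow Q^{n+2}\) of integral cohomology classes of \(Q^{n+2}\), and then to invoke naturality of the AHSS together with the already-known collapse of the AHSS for \(Q^{n+2}\). In both cases \(n+2 \not\equiv 2 \pmod 4\), so the collapse for \(Q^{n+2}\) falls into the easy part of the untwisted computation discussed just before Lemma~\ref{cor:Q-KO_untwisted}.

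The key step for \(n = 8k+1\) is the computation of \(i^{*}\) on the hyperplane class. Since the projective cone \(\Thom_{Q^n}\OO(1)\) is cut out from \(Q^{n+2}\subset\P^{n+3}\) by its projective tangent hyperplane at the vertex, the restriction of the hyperplane class of \(\P^{n+3}\) agrees on the cone with the hyperplane class of \(\P^{n+2}\), which under the standard identification of the Thom space of \(\OO_X(1)\) with the projective cone over \(X\) is precisely the Thom class \(\theta\). Therefore \(i^{*}(x_{Q^{n+2}}) = \theta\), and consequently \(i^{*}(x^{m+1}_{Q^{n+2}}) = \theta^{m+1} = x^{m}\theta\). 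For \(n = 8k+3\) the corresponding identification is geometric: a maximal linear subspace \(L \subset Q^n\subset\P^{n+1}\) representing \(a\) has a projective cone \(CL\subset\P^{n+3}\) of one higher complex dimension, and \(CL\) is a maximal linear subspace of \(Q^{n+2}\). Its fundamental class \(a'\in H^{n+3}(Q^{n+2};\Z)\) then satisfies \(i^{*}(a') = \pm a\theta\).

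Given both identifications, naturality of the spectral sequence yields
\[
 d_{8k+2}(x^{m}\theta) = i^{*}\bigl(d_{8k+2}(x^{m+1})\bigr) = 0, \qquad d_{8k+2}(a\theta) = \pm i^{*}\bigl(d_{8k+2}(a')\bigr) = 0,
\]
since both \(d_{8k+2}\) differentials vanish in the AHSS for \(Q^{n+2}\). This rules out the only candidate non-trivial higher differentials, so by Lemma~\ref{lem:AHSS_first-differential} the spectral sequence collapses at \(E_3\).

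The main technical obstacle lies in the second identification: verifying that the coefficient in \(i^{*}(a') = \pm a\theta\) is indeed \(\pm 1\) and not some other integer. The cleanest way to settle this is to reduce modulo \(2\), observe that the relevant mod-\(2\) class on the Thom side is the unique non-zero element of \(\rH^{n+3}(\Thom_{Q^n}\OO(1); \Z/2)\) in its degree, and confirm non-triviality by choosing a representative of \(a'\) meeting the tangent hyperplane transversally; once this is done, the proof proceeds as sketched.
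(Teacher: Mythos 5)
Your proposal follows essentially the same route as the paper: restrict classes from $Q^{n+2}$ along $i$, check they hit $x^m\theta$ (for $n\equiv 1$) and $a\theta$ (for $n\equiv 3$), and invoke naturality plus the collapse for $Q^{n+2}$. The one place where the paper is cleaner is the coefficient question you flag for $n\equiv 3$: rather than perturbing $CL$ to meet the tangent hyperplane transversally, the paper composes further with $j\colon Q^n\hookrightarrow\Thom_{Q^n}\OO(1)$ and computes $j^*i^*(a)=ax\neq 0$ in $H^{n+3}(Q^n)$ directly (the intersection of a codimension-$(m+2)$ linear subspace of $Q^{n+2}$ with the two transversal hyperplanes cutting out $Q^n$), which forces $i^*(a)$ to be the unique non-zero element $a\theta$ of $\rH^{n+3}(\Thom_{Q^n}\OO(1);\Z/2)$ with no cone geometry needed.
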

\begin{proof}
We need only consider the cases when \( n \) is odd. Write \( n=2m+1 \).

When \( n\equiv 1 \) mod \( 4 \) we claim that \( i^* \) maps \( x^{m+1} \) in \( H^{n+1}(Q^{n+2},\Sq^2) \) to \( x^m\theta \) in \( H^{n+1}(\Thom_{Q^n}\!\OO(1),\Sq^2) \).
Indeed, \( j^*i^* \) maps the class of the hyperplane section \( x \) in \( H^2(Q^{n+2}) \) to the class of the hyperplane section \( x \) in \( H^2(Q^n) \).
So \( i^*x \) in \( H^2(\Thom_{Q^n}\!\OO(1)) \) must be non-zero, hence equal to \( \theta \) modulo \( 2 \).
It follows that \( i^*(x^{m+1})=\theta^{m+1} \). Since \( \theta^2=\Sq^2(\theta)=x\theta \), we have \( \theta^{m+1}=x^m\theta \), proving the claim. As we already know that all higher differentials vanish on \( H^{\ast}(Q^{n+2},\Sq^2) \), we may now deduce that they also vanish on \( H^{\ast}(\Thom_{Q^n}\!\OO(1),\Sq^2) \).

When \( n\equiv 3 \) mod \( 4 \) we claim that \( i^* \) maps \( a \) in \( H^{n+3}(Q^{n+2},\Sq^2) \) to
\( a\theta \) in \( H^{n+3}(\Thom_{Q^n}\!\OO(1),\Sq^2) \). Indeed, \( a \) represents a linear subspace of codimension \( m+2 \) in \( Q^{n+2} \) and is thus mapped to the class of a linear subspace of the same codimension in \( Q^n \): \( j^*i^*(a)=ax \) in \( H^{n+3}(Q^n) \). Thus, \( i^*(a) \) is non-zero in \( H^{n+3}(\Thom_{Q^n}\!\OO(1)) \), equal to \( a\theta \) modulo \( 2 \). Again, this implies that all higher differentials vanish on \( H^{\ast}(\Thom_{Q^n}\!\OO(1),\Sq^2) \) since they vanish on \( H^{\ast}(Q^{n+2},\Sq^2) \).
\end{proof}

The additive structure of \( \KO^*(Q^n;\OO(1)) \) thus follows directly from the result for \( H^d(Q^n,{\Sq^2}+x)=\rH^{d+2}(\Thom_{Q^n}\!\OO(1)) \) displayed in Lemma~\ref{lem:Q-KO_twisted} via Corollary~\ref{cor:2-torsion_of_Thom-KO}.

\subsection{Spinor varieties}\label{sec:eg:spinor}
Let \( \Gr_{\SO}(n,N) \) be the Grassmannian of \( n \)-planes in \( \C^N \) isotropic with respect to a fixed non-degenerate symmetric bilinear form, or, equivalently, the Fano variety of projective \( (n-1) \)-planes contained in the quadric \( Q^{N-2} \). For each \mbox{\( N>2n \)}, this is an irreducible homogeneous variety.
In particular, for \( N=2n+1 \) we obtain the spinor variety \( \GrSOc{n+1}{2n+2}=\Gr_{\SO}(n,2n+1) \).
The variety \( \Gr_{\SO}(n,2n) \) falls apart into two connected components, both of which are isomorphic to \( \GrSOc{n}{2n} \). This is reflected by the fact that we can equivalently identify \( \GrSOc{n}{2n} \) with \( \SO(2n-1)/U(n-1) \) or \( \SO(2n)/U(n) \).

As for all Grassmannians, the Picard group of \( \GrSOc{n}{2n} \) is isomorphic to \( \Z \); we fix a line bundle \( \mathcal{S} \) which generates it. The KO-theory twisted by \( \mathcal{S} \) vanishes:
\pagebreak
\begin{thm}\label{thm:GrSO-KO}
For all \( n\geq 2 \) the additive structure of \( \KO^*(\GrSOc{n}{2n};\lb{L}) \) is as follows:\vspace{-0.5\baselineskip}
 \begin{center}
 \begin{tabular}[t]{>{\( }l<{ \)}|MM|M|M}
 \toprule
                              & t_0     & t_1     & s_i(\OO)                             & s_i(\mathcal{S})          \\
   \midrule
   \text{\( n\equiv 2 \mod 4 \)}
     & 2^{n-2} & 2^{n-2} & \rho(\tfrac{n}{2}, {1-i}) & 0                \\
   \minrowheight{\( \tfrac{n}{2} \)}
   \text{otherwise}
     & 2^{n-2} & 2^{n-2} & \rho(\lfloor\tfrac{n}{2}\rfloor, {-i})  & 0  \\
   \bottomrule
 \end{tabular}
 \end{center}

 \vspace{0.5\baselineskip}
 \noindent
 Here, the values \( \rho(n,i) \) are defined as in Theorem~\ref{thm:GrSp-KO}.
\end{thm}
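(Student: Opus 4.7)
The plan is to follow the template of the earlier examples in this section: read off $t_0$ and $t_1$ from a Schubert cell count, invoke Kono and Hara \cite{KonoHara:HSS} for the untwisted $s_i(\OO)$, and establish $s_i(\mathcal{S})=0$ by a separate computation.  The Schubert decomposition of $S_n=\GrSOc{n}{2n}$ consists of $2^{n-1}$ cells indexed by subsets $A\subseteq\{1,2,\ldots,n-1\}$, with the cell for $A$ of complex dimension $\sum_{i\in A}i$.  The involution $A\leftrightarrow A\,\triangle\,\{1\}$ flips the parity of $|A|$, so each parity class contains exactly $2^{n-2}$ cells.  Since the cells of the Thom space of any line bundle over $S_n$ are those of $S_n$ shifted uniformly in complex dimension, this gives $t_0=t_1=2^{n-2}$ independently of the twist.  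The untwisted $s_i(\OO)$ then follow directly from the mod-$2$ cohomology computations and spectral-sequence collapse of \cite{KonoHara:HSS}.

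For the twisted case, Corollary~\ref{cor:2-torsion_of_Thom-KO} reduces the question to showing that $H^*(S_n,\Sq^2+c_1(\mathcal{S}))=0$, provided the Atiyah--Hirzebruch spectral sequence for $\rKO^*(\Thom_{S_n}\mathcal{S})$ collapses at $E_3$.  But the vanishing of this Steenrod cohomology forces the odd rows of the $E_3$-page to be zero, and Lemma~\ref{lem:AHSS_first-differential} then rules out all higher differentials, so collapse is automatic.  The whole theorem therefore reduces to the vanishing of this twisted Steenrod cohomology.

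To verify it, I would imitate the strategy of Section~\ref{sec:eg:symplecticGr}.  Adjoining a hyperbolic plane to the ambient quadratic form and singling out an isotropic basis vector produces a closed embedding $S_n\hookrightarrow S_{n+1}$; a local calculation analogous to Lemma~\ref{lem:GrSp-Geometry} should identify the open complement with the total space of an explicit vector bundle $\vb{F}$ over $S_n$ whose determinant agrees with $\mathcal{S}$ modulo squares.  The resulting cofibration sequence
\[ (S_n)_+ \hookrightarrow (S_{n+1})_+ \twoheadrightarrow \Thom_{S_n}(\vb{F}) \]
induces, upon passage to mod-$2$ cohomology, a short exact sequence of $H^*(S_{n+1};\Z/2)$-modules, everything being concentrated in even real degrees.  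Passing from this sequence to $\Sq^2$-cohomology in the manner of Lemma~\ref{lem:Sq-cohomology-of-Amn}, and feeding in Kono and Hara's descriptions of $H^*(S_n,\Sq^2)$ and $H^*(S_{n+1},\Sq^2)$, should sandwich the twisted Steenrod cohomology of $S_n$ between two groups of equal dimension and force it to vanish.

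The principal obstacle I anticipate is geometric rather than algebraic: exhibiting the embedding $S_n\hookrightarrow S_{n+1}$ cleanly, computing its normal bundle, and matching the determinant with $\mathcal{S}$.  In contrast to the symplectic setting, maximal isotropic subspaces for a non-degenerate symmetric form fall into two families, so one must carefully track which family is preserved by the embedding.  Once this identification is in place, the remaining cohomological step is a direct analogue of the argument carried out for symplectic Grassmannians.
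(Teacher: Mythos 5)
Your reduction of the problem to the vanishing of $H^*(S_n,\Sq^2+c_1(\mathcal{S}))$, and the observation that this vanishing forces the spectral sequence for the Thom space to collapse via Lemma~\ref{lem:AHSS_first-differential}, are both correct and match the paper's reasoning. The gap is in your geometric programme. You propose to find an embedding $S_n\hookrightarrow S_{n+1}$ whose open complement is a vector bundle $\vb{F}$ over $S_n$ with $\det\vb{F}\equiv\mathcal{S}$ modulo squares, so that the Thom space carries the twisted Steenrod differential $\Sq^2+c_1(\mathcal{S})$. No such $\vb{F}$ arises here. The paper constructs the embedding and finds normal bundle $\vb{U}^\perp$ (Lemma~\ref{lem:Spinor-Geometry} and Corollary~\ref{cor:Spinor-CoSeq}), but then explicitly warns that, unlike the symplectic case, $c_1(\vb{U}^\perp)$ is \emph{twice} a generator of $\Pic(S_n)$, hence vanishes modulo $2$. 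The Thom space in the cofibration sequence therefore carries the \emph{untwisted} differential $\Sq^2$, and your plan as stated would only re-derive the untwisted computation. (Your worry about the two families of maximal isotropics is a red herring here: the paper works with $\GrSOc{n}{2n}=\Gr_{\SO}(n-1,2n-1)$, an odd-dimensional isotropic Grassmannian, which is connected.)

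The paper gets around this by an algebraic device you are missing: the short exact sequence of mod-$2$ cohomology coming from the (untwisted) cofibration consists of $H^*(S_{n+1};\Z/2)$-module maps, so they commute both with $\Sq^2$ and with multiplication by the degree-$2$ generator $e_2$, hence also with $d'=\Sq^2+e_2$. This produces the short exact sequence
\[
0\to(H_n,d')\xrightarrow{\;\cdot e_{2n}\;}(H_{n+1},d')\to(H_n,d')\to 0,
\]
whose long exact cohomology sequence gives an induction: $H^*(H_n,d')=0$ implies $H^*(H_{n+1},d')=0$, with the base case $n=2$ checked directly. Note this is a genuine induction on $n$, not the ``sandwich between two groups of equal dimension'' you describe; that phrasing suggests you are still thinking of the symplectic argument, where the relevant exact sequence splits. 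To repair your proposal, replace the search for a twisted normal bundle by this transfer-of-differential argument.
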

\begin{proof}
The cohomology of \( \GrSOc{n}{2n} \) with \( \Z/2 \)-coefficients has simple generators \( e_2 \), \( e_4 \), \dots, \( e_{2n-2} \), \ie it is additively generated by products of distinct elements of this list.
Its multiplicative structure is determined by the rule \( e_{2i}^2=e_{4i} \), and the second Steenrod square is given by \( \Sq^2(e_{2i})=ie_{2i+2} \) \cite{Ishitoya:Squaring}*{Proposition~1.1}.
In both formulae it is of course understood that \( e_{2j}=0 \) for \( j\geq n \).
What we need to show is that for all \( n\geq 2 \) we have
\begin{equation*}
 H^*(\GrSOc{n}{2n},{\Sq^2}+e_2) = 0
\end{equation*}
Let us abbreviate \( H^*(\GrSOc{n}{2n},{\Sq^2}+e_2) \) to \( (H_n,d') \).
We claim that we have the following short exact sequence of differential \( \Z/2 \)-modules:
\begin{equation}\label{seq:Spinor-SqCohomology}
 0 \rightarrow (H_n,d') \overset{\cdot e_{2n}}{\rightarrow} (H_{n+1},d') \rightarrow (H_n,d') \rightarrow 0
\end{equation}
This can be checked by a direct calculation.
Alternatively, it can be deduced from the geometric considerations below.
Namely, it follows from the cofibration sequence of Corollary~\ref{cor:Spinor-CoSeq} that we have such an exact sequence of \( \Z/2 \)-modules with maps respecting the differentials given by \( \Sq^2 \) on all three modules.
Since they also commute with multiplication by \( e_2 \), they likewise respect the differential \( d'={\Sq^2}+e_2 \).

The long exact cohomology sequence associated with \eqref{seq:Spinor-SqCohomology} allows us to argue by induction: if \( H^*(H_n,d')=0 \) then  also \( H^*(H_{n+1},d')=0 \). Since we can see by hand that \( H^*(H_2,d')=0 \), this completes the proof.
\end{proof}

We close with a geometric interpretation of the exact sequence~\eqref{seq:Spinor-SqCohomology}, via an analogue of Lemmas \ref{lem:Gr-Geometry} and \ref{lem:GrSp-Geometry}.
Let us write \( \vb{U} \) for the universal bundle over \( \GrSOc{n}{2n} \), \ie for the restriction of the universal bundle over \( \Gr(n-1,2n-1) \) to \( \GrSOc{n}{2n} \), and \( \vb{U}^{\perp} \) for the restriction of the orthogonal complement bundle, so that \( \vb{U}\oplus\vb{U}^{\perp} \) is the trivial (\( 2n-1 \))-bundle over \( \GrSOc{n}{2n} \). As in Section~\ref{sec:eg:symplecticGr}, we emphasize that under these conventions the fibres of \( \vb{U} \) and \( \vb{U}^{\perp} \) are perpendicular with respect to a hermitian metric on \( \C^{2n-1} \) --- they are not orthogonal with respect to the chosen symmetric form.
\begin{lem}\label{lem:Spinor-Geometry}
The spinor variety \( \GrSOc{n}{2n} \) embeds into the spinor variety \( \GrSOc{n+1}{2n+2} \) with normal bundle \mbox{\( \vb{U}^{\perp} \)} such that the embedding extends to an embedding of this bundle. The closed complement of \mbox{\( \vb{U}^{\perp} \)} in \( \GrSOc{n+1}{2n+2} \) is again isomorphic to \( \GrSOc{n}{2n} \).
\end{lem}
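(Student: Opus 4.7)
The plan is to mirror the construction of Lemma~\ref{lem:GrSp-Geometry} for symplectic Grassmannians, adapted to the symmetric bilinear setting. First, I fix coordinates on $\C^{2n+1}$: pick isotropic vectors $e_1,e_2$ with $Q(e_1,e_2)=1$ and let $V=\langle e_1,e_2\rangle^{\perp}\cong\C^{2n-1}$ carry the restricted non-degenerate symmetric form $Q_{2n-1}$. Using the identifications $\GrSOc{n+1}{2n+2}\cong\Gr_{\SO}(n,2n+1)$ and $\GrSOc{n}{2n}\cong\Gr_{\SO}(n-1,2n-1)$, I consider two maps $i_j\colon\GrSOc{n}{2n}\hookrightarrow\GrSOc{n+1}{2n+2}$, $\Lambda\mapsto\langle e_j\rangle\oplus\Lambda$, for $j=1,2$. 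The map $i_2$ will be the promised closed complement, while $i_1$ will be extended to an open embedding of $\vb{U}^{\perp}$.

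For the extension: given $\Lambda\in\GrSOc{n}{2n}$ and $v\in\Lambda^{\perp}\subset\C^{2n-1}$ (hermitian orthogonal complement), I define $\Gamma_{\Lambda,v}\subset\C^{2n+1}$ to be the graph of the linear map
\[
\langle e_1\rangle\oplus\Lambda\longrightarrow \langle e_2\rangle\oplus\Lambda^{\perp},\qquad \alpha e_1+x\;\mapsto\;\bigl(-\tfrac{1}{2}\alpha Q(v,v)-Q(x,v)\bigr)e_2+\alpha v.
\]
A direct bilinear expansion shows $\Gamma_{\Lambda,v}$ is isotropic: the diagonal contribution $\alpha\alpha' Q(v,v)$ of the summand $\alpha v$ is cancelled by the $-\tfrac{1}{2}Q(v,v)$ correction, and the cross terms $\alpha Q(x',v)+\alpha' Q(x,v)$ are cancelled by the $-Q(x,v)$ correction. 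Since $\Gamma_{\Lambda,0}=\langle e_1\rangle\oplus\Lambda$, this extends $i_1$ to a morphism from the total space of $\vb{U}^{\perp}$ into $\GrSOc{n+1}{2n+2}$.

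Next, I verify that the images of $i_2$ and the extended $i_1$ partition $\GrSOc{n+1}{2n+2}$. An isotropic $n$-plane $W$ contains $e_2$ precisely when $W=\langle e_2\rangle\oplus\Lambda$ for a unique isotropic $(n-1)$-plane $\Lambda\subset\C^{2n-1}$, giving the image of $i_2$ as the closed locus $\{e_2\in W\}$. Otherwise, $W':=W\cap(\langle e_2\rangle\oplus\C^{2n-1})$ is $(n-1)$-dimensional and projects injectively onto an isotropic $(n-1)$-plane $\Lambda\subset\C^{2n-1}$, so $W'$ is the graph of a linear form $\phi\colon\Lambda\to\langle e_2\rangle$. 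Picking $w_0\in W$ with $e_1$-coefficient $1$ and subtracting an element of $W'$, I normalise to $w_0=e_1+\zeta e_2+v$ with $v\in\Lambda^{\perp}$. The isotropy of $w_0$ forces $\zeta=-\tfrac{1}{2}Q(v,v)$, and its orthogonality to $W'$ forces $\phi(\lambda)=-Q(v,\lambda)$, so $W=\Gamma_{\Lambda,v}$. This exhibits the extended $i_1$ as a bijective morphism onto the open subvariety $\{W:e_2\notin W\}$; a routine check in local coordinates promotes it to an isomorphism of varieties.

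The hard part will be guessing the correct formula for $\Gamma_{\Lambda,v}$. A direct translation of the symplectic formula (just $\alpha v$, with no corrections) does not yield an isotropic graph in the symmetric setting: $Q(v,v)$ is typically non-zero, and the cross terms involving $Q(x,v)+Q(v,x)=2Q(x,v)$ no longer cancel by skew-symmetry. Once the quadratic and linear corrections in $(\alpha,x)$ are in place, both the isotropy calculation and the complement analysis are routine.
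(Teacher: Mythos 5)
Your proposal matches the paper's proof: identical choice of the hyperbolic extension of the form, identical maps $i_1, i_2$, and the identical formula for the graph $\Gamma_{\Lambda,v}$ (the matrix $\bigl(\begin{smallmatrix}-\tfrac12 Q(v,v) & -Q(-,v)\\ v & 0\end{smallmatrix}\bigr)$). The paper states the formula and appeals to the analogous symplectic Lemma~\ref{lem:GrSp-Geometry} for the complement analysis, whereas you spell that analysis out; the content is the same.
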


\begin{cor}\label{cor:Spinor-CoSeq}
 We have a cofibration sequence
 \begin{equation*}
  \GrSOc{n}{2n}_+\overset{i}{\hookrightarrow}\GrSOc{n+1}{2n+2}_+\overset{p}{\twoheadrightarrow} \Thom_{\GrSOc{n}{2n}}{\vb{U}^{\perp}}
 \end{equation*}
\end{cor}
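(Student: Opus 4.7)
The plan is to derive Corollary~\ref{cor:Spinor-CoSeq} as a direct application of the Morel--Voevodsky homotopy purity theorem to the closed embedding supplied by Lemma~\ref{lem:Spinor-Geometry}. I would set \( X := \GrSOc{n+1}{2n+2} \) and let \( Z := \GrSOc{n}{2n} \) denote the closed complement of the embedded Thom bundle \( \vb{U}^{\perp}\subset X \). The general formalism of localization sequences recalled in the discussion preceding \eqref{seq:Kh-localization} then supplies a canonical cofibration sequence
\[
   (X-Z)_+ \longrightarrow X_+ \longrightarrow X/(X-Z)
\]
in the pointed \( \A^1 \)-homotopy category \( \pH(\C) \).

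First I would identify the leftmost term. By construction \( X-Z \) is the total space of the vector bundle \( \vb{U}^{\perp} \) over a copy of \( \GrSOc{n}{2n} \), and its bundle projection is an \( \A^1 \)-weak equivalence. Hence \( (X-Z)_+ \simeq \GrSOc{n}{2n}_+ \) in \( \pH(\C) \), matching the first term of the stated sequence. Next I would invoke Theorem~2.23 of \cite{MorelVoevodsky}*{Chapter~3} to identify \( X/(X-Z) \) with \( \Thom_Z(\N_{Z/X}) \) in \( \pH(\C) \). The desired cofibration sequence then drops out once I know that \( \N_{Z/X} \) is isomorphic to the bundle \( \vb{U}^{\perp} \) appearing in the statement.

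This last identification is the main obstacle. Lemma~\ref{lem:Spinor-Geometry} says that \( \vb{U}^{\perp} \) is the normal bundle of the \emph{other} copy of \( \GrSOc{n}{2n} \)---the one sitting as the zero section of the open subvariety \( \vb{U}^{\perp}\subset X \). What I need instead is that the normal bundle of the closed complement \( Z \) is also isomorphic to \( \vb{U}^{\perp} \), now viewed as a bundle over \( Z \) via the isomorphism \( Z\cong \GrSOc{n}{2n} \). This is a symmetry statement, asserting that the two copies of \( \GrSOc{n}{2n} \) play geometrically interchangeable roles inside \( \GrSOc{n+1}{2n+2} \), just as in Lemmas~\ref{lem:Gr-Geometry} and \ref{lem:GrSp-Geometry}. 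I would prove it by an explicit coordinate construction modelled on the proof of Lemma~\ref{lem:GrSp-Geometry}: pick a basis of \( \C^{2n+1} \) adapted to the chosen symmetric bilinear form in such a way that both embeddings \( \GrSOc{n}{2n}\hookrightarrow \GrSOc{n+1}{2n+2} \) become simultaneously visible, parametrise neighbourhoods of each embedded copy by graphs of linear maps from \( \vb{U} \) to \( \vb{U}^{\perp} \), and then read off the normal bundle of each copy directly. Once the coordinate picture is in place, swapping the roles of the two copies is automatic and the required identification of \( \N_{Z/X} \) with \( \vb{U}^{\perp} \) follows.
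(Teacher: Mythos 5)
Your proof is correct and follows the same route the paper intends, spelling out a step the paper leaves implicit. You rightly observe that Lemma~\ref{lem:Spinor-Geometry} only asserts directly that the \emph{zero-section} copy of \( \GrSOc{n}{2n} \) has normal bundle \( \vb{U}^{\perp} \), whereas homotopy purity requires the normal bundle of the \emph{closed complement}; the missing symmetry is already built into the explicit construction in the proof of Lemma~\ref{lem:Spinor-Geometry}, since the form \( Q_{2n+1} \) is invariant under transposing \( e_1 \) and \( e_2 \), and this isometry of \( \C^{2n+1} \) induces an automorphism of \( \GrSOc{n+1}{2n+2} \) interchanging \( i_1 \) and \( i_2 \), so both copies have normal bundle \( \vb{U}^{\perp} \). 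With that in hand, your combination of the localization cofibration, the \( \A^1 \)-invariance of the bundle projection, and Morel--Voevodsky purity gives exactly the stated sequence.
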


Note however that, unlike in the symplectic case, the first Chern classes of \( \vb{U} \) and \( \vb{U}^{\perp} \) pull back to {\em twice} a generator of the Picard group of \( \GrSOc{n}{2n} \). For example, the embedding of \( \GrSOc{2}{4} \) into \( \Gr(1,3) \) can be identified with the embedding of the one-dimensional smooth quadric into the projective plane, of degree \( 2 \), and the higher dimensional cases can be reduced to this example.
Thus, \( c_1(\vb{U}) \) and \( c_1(\vb{U}^{\perp}) \) are trivial in \( \Pic(\GrSOc{n}{2n})/2 \).

\begin{proof}[proof of Lemma~\ref{lem:Spinor-Geometry}]
The proof is similar to the proof of Lemma~\ref{lem:GrSp-Geometry}. Let \( e_1, e_2 \) be the first two canonical basis vectors of \( \C^{2n+1} \), and let \( \C^{2n-1} \) be embedded into \( \C^{2n+1} \) via the remaining coordinates. Let \( \GrSOc{n}{2n} \) be defined in terms of a symmetric form \( Q \) on \( \C^{2n-1} \), and define \( \GrSOc{n+1}{2n+2} \) in terms of
\begin{align*}
 Q_{2n+1}:=\begin{pmatrix}
            0  & 1  & 0\\
            1 & 0  & 0\\
            0  & 0  & Q
           \end{pmatrix}
\end{align*}
Let \( i_1 \) and \( i_2 \) be the embeddings of \( \GrSOc{n}{2n} \) into \( \GrSOc{n+1}{2n+2} \) sending an \( (n-1) \)-plane \( \Lambda\subset\C^{2n-1} \) to \( e_1\oplus\Lambda \) or \( e_2\oplus\Lambda \) in \( \C^{2n+1} \), respectively.
Given an \( (n-1) \)-plane \( \Lambda\in\GrSOc{n}{2n} \) together with a vector \( v \) in \( \Lambda^{\perp}\subset\C^{2n-1} \), consider the linear map
\begin{equation*}
 \begin{pmatrix}
  -\tfrac{1}{2}Q(v,v) & -Q(-,v) \\
  v                   & 0
 \end{pmatrix}
 \colon{
  \left<e_1\right>\oplus\Lambda \rightarrow \left<e_{2}\right>\oplus\Lambda^{\perp}
 }
\end{equation*}
Sending \( (\Lambda,v) \) to the graph of this function defines an open embedding of \( \vb{U}^{\perp} \) whose closed complement is the image of \( i_2 \).
\end{proof}

\subsection{Exceptional hermitian symmetric spaces}\label{sec:eg:exceptional}
Lastly, we turn to the exceptional hermitian symmetric spaces \( \EIII \) and \( \EVII \). We write \( \OO(1) \) for a generator of the Picard group in both cases.

\begin{thm}\label{thm:Exceptional-KO}
The KO-groups of the exceptional hermitian symmetric spaces \( \EIII \) and \( \EVII \) are as follows:\vspace{-0.5\baselineskip}
\begin{center}
  \begin{tabular}[t]{>{\( }l<{ \)}|MM|MMMM|MMMM}
  \toprule
   &  &  & \multicolumn{4}{M|}{\vb{L}\equiv\OO}  & \multicolumn{4}{M}{\vb{L}\equiv\OO(1)} \\
   & t_0  & t_1 & s_0 & s_1 & s_2 & s_3  & s_0 & s_1 & s_2 & s_3 \\
  \midrule
  \KO^*(\EIII;\lb{L}) & 15 & 12 & 3 & 0 & 0 & 0 & 3 & 0 & 0 & 0 \\
  \KO^*(\EVII;\lb{L}) & 28 & 28 & 1 & 3 & 3 & 1 & 0 & 0 & 0 & 0 \\
  \bottomrule
  \end{tabular}
\end{center}
\end{thm}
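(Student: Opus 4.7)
The plan is to carry out the same three-step computation as for the classical hermitian symmetric spaces: read off $t_0$ and $t_1$ from the Schubert cell decomposition, compute $(H^*(X;\Z/2),\Sq^2)$ and $(H^*(X;\Z/2),\Sq^2+c_1(\OO(1)))$ directly, and verify that the Atiyah-Hirzebruch spectral sequences for $\rKO^*(X)$ and $\rKO^*(\Thom\OO(1))$ degenerate at the $E_3$-page. Lemma~\ref{lem:2-torsion_of_KO} and Corollary~\ref{cor:2-torsion_of_Thom-KO} then produce the stated values of $s_i$.

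The cell counts are immediate from the Bruhat decompositions: both $\EIII$ and $\EVII$ have Schubert cell structures indexed by minimal-length coset representatives in the respective Weyl groups, and the numbers in the theorem match $\chi(\EIII)=t_0+t_1=27$ and $\chi(\EVII)=t_0+t_1=56$ together with the degree distribution determined by the length function. For the cohomology computations, explicit presentations of the mod-$2$ cohomology rings of $\EIII$ and $\EVII$ together with the action of $\Sq^2$ are available in the literature on exceptional Lie group quotients, so the required calculation of $H^*(X,\Sq^2)$ and $H^*(X,\Sq^2+x)$, with $x=c_1(\OO(1))$ the degree-$2$ generator, reduces to a finite piece of linear algebra. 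For $\EVII$ the twisted complex $(H^*(\EVII;\Z/2),\Sq^2+x)$ turns out to have vanishing cohomology, in complete analogy with the spinor varieties treated in Theorem~\ref{thm:GrSO-KO}; for $\EIII$ one finds that adding $x$ to $\Sq^2$ happens to preserve the graded dimensions.

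The main obstacle is to establish collapse of the spectral sequences at $E_3$. By Lemma~\ref{lem:AHSS_first-differential}, any non-trivial higher differential must appear on some page $E_{8k+2}$ and is a derivation, so it suffices to check its vanishing on algebra generators. For the untwisted spectral sequences the rank of the free part of $\KO^*(X)$ is pinned down by $t_0$ and $t_1$, and matching the total $\F_2$-dimension on the $E_\infty$-page against the rank predicted by the Bott sequence together with the known computations of $\K^*(X)$ forces all higher differentials to vanish; this is the same strategy employed by Kono and Hara for the classical cases in \cite{KonoHara:HSS}. For the twisted cases, the $\EVII$ calculation is automatic once the twisted cohomology vanishes, while for $\EIII$ the analogous collapse can be transported from the untwisted situation via Corollary~\ref{cor:Xcollaps-Tcollapse} applied to a suitable geometric map, for instance an inclusion of a Schubert subvariety along which $\OO(1)$ restricts compatibly. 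Once collapse is established, the values of all $s_i(\lb{L})$ follow directly from the formula of Corollary~\ref{cor:2-torsion_of_Thom-KO}.
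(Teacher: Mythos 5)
Your overall plan coincides with the paper's: read off $t_0,t_1$ from the cells, quote the $\Z/2$-cohomology ring and the action of $\Sq^2$ from the literature, compute $H^*(X,\Sq^2)$ and $H^*(X,\Sq^2+t)$, show the spectral sequences degenerate at $E_3$, and apply Lemma~\ref{lem:2-torsion_of_KO} and Corollary~\ref{cor:2-torsion_of_Thom-KO}. The untwisted case is handled exactly as in the paper (citing Kono--Hara), and for $\EVII$ twisted you correctly note $H^*(\EVII,\Sq^2+t)=0$, which makes the twisted collapse automatic.

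Where you diverge from the paper, and where your argument has a gap, is the collapse of the spectral sequence for $\rKO^*(\Thom_{\EIII}\OO(1))$. You propose to invoke Corollary~\ref{cor:Xcollaps-Tcollapse} with ``a suitable geometric map, for instance an inclusion of a Schubert subvariety along which $\OO(1)$ restricts compatibly.'' This is not made precise, and the most natural candidate fails: the zero section $j\colon\EIII\hookrightarrow\Thom\OO(1)$ induces, on $\Sq^2$-cohomologies, the map $H^*(\EIII,\Sq^2+t)\to H^*(\EIII,\Sq^2)$ given by multiplication by $t$, and since $u^2t=0$ in $H^*(\EIII;\Z/2)$ this is \emph{not} injective, so the hypothesis of Corollary~\ref{cor:Xcollaps-Tcollapse} is violated. (Schubert subvarieties also need not be smooth, so it is not clear the framework applies.) The paper instead uses a purely algebraic degree argument: $H^*(\EIII,\Sq^2+t)=\Z/2\{u,u^2,u^3\}$ is concentrated in degrees $8,16,24$, all $\equiv 0\bmod 8$, so after the Thom shift row $q\equiv-1$ of the $E_3$-page lives only in columns $\equiv 2\bmod 8$. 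By Lemma~\ref{lem:AHSS_first-differential} any non-trivial higher differential occurs on a page $E_r$ with $r\equiv 2\bmod 8$ (so $r\geq 10$), is supported on rows $q\equiv 0$ and $q\equiv-1$, and its action on row $0$ lifts to an action on row $-1$ via multiplication by $\eta$. A source $x$ in row $0$ with $d_r(x)\neq 0$ must have $\eta x\neq 0$ (hence column $\in\{10,18,26\}$) and $d_r(x)$ in row $-1$ at column $\in\{10,18,26\}$; since $r\geq 10$ and $r\not\equiv 0\bmod 8$ these two conditions are incompatible, forcing collapse. You should replace your geometric step with this degree argument.

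Two minor points: your claim that ``adding $x$ to $\Sq^2$ happens to preserve the graded dimensions'' of the $\Sq^2$-cohomology of $\EIII$ is not literally correct --- $H^*(\EIII,\Sq^2)$ sits in degrees $0,16,32$ while $H^*(\EIII,\Sq^2+t)$ sits in degrees $8,16,24$; only the total dimension and the residues modulo $8$ agree, which is what the computation actually needs. Also, the untwisted collapse for the exceptional spaces is genuinely the hard part of Kono--Hara's computation and is not merely a dimension count against $\K^*$; the paper deliberately defers to their argument rather than reproducing it, and you should do the same rather than suggest it follows from a simple Euler-characteristic match.
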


\vspace{0.5\baselineskip}
\begin{proof}
The untwisted KO-groups have been computed in \cite{KonoHara:HSS}, the main difficulty as always being to prove that the Atiyah-Hirzebruch spectral sequence collapses. For the twisted groups, however, there are no problems.
We quote from \S~3 of said paper that the cohomologies of the spaces in question can be written as
\begin{align*}
H^*(\EIII;\Z/2)&=\Z/2\big[t,u\big]\Big/(u^2t,\;u^3+t^{12})\\
H^*(\EVII;\Z/2)&=\Z/2\big[t,v,w\big]\Big/(t^{14},\;v^2,\;w^2)
\intertext{with \( t \) of degree \( 2 \) in both cases, and \( u \), \( v \) and \( w \) of degrees \( 8,10 \) and \( 18 \), respectively. The Steenrod squares are determined by \( \Sq^2u=ut \) and \( \Sq^2v=\Sq^2w=0 \). Thus, we find}
H^*(\EIII,{\Sq^2}+t)&=\Z/2\cdot u\oplus\Z/2\cdot u^2\oplus\Z/2\cdot u^3\\
H^*(\EVII,{\Sq^2}+t)&=0
\end{align*}
By Lemma~\ref{lem:AHSS_first-differential} the Atiyah-Hirzebruch spectral sequence for \( \EIII \) must collapse. This gives the result displayed above.
\end{proof}

\subsection*{Acknowledgements}
The present paper is based on research carried out in the context of the author's PhD studies, which have been generously funded by the Engineering and Physical Sciences Research Council (EPSRC). I wish to thank my supervisor Burt Totaro for his commitment, time and patience, for directing me towards the problems and ideas discussed here, and for his continuing support. Special thanks are moreover due to Baptiste Calm{\`e}s and Marco Schlichting for highly helpful conversations and explanations of their respective work.
Finally, I am grateful to Jens Hornbostel for constructive comments on an earlier version of this paper and to the anonymous referee for careful proofreading.

\begin{bibdiv}
\begin{biblist}
\bib{Adams}{book}{
  author={Adams, Frank},
  title={Stable homotopy and generalised homology},
  series={Chicago Lectures in Mathematics},
  note={Reprint of the 1974 original},
  publisher={University of Chicago Press},
  place={Chicago, IL},
  date={1995},
  pages={x+373},
  isbn={0-226-00524-0},
}

\bib{Arason}{article}{
  author={Arason, J{\'o}n Kristinn},
  title={Der Wittring projektiver R\"aume},
  journal={Math. Ann.},
  volume={253},
  date={1980},
  number={3},
  pages={205--212},
  issn={0025-5831},
}

\bib{ABS:Clifford}{article}{
  author={Atiyah, Michael},
  author={Bott, Raoul},
  author={Shapiro, Arnold},
  title={Clifford modules},
  journal={Topology},
  volume={3},
  date={1964},
  number={suppl. 1},
  pages={3--38},
  issn={0040-9383},
}

\bib{Balmer:TWGI}{article}{
  author={Balmer, Paul},
  title={Triangular Witt groups. I. The 12-term localization exact sequence},
  journal={$K$-Theory},
  volume={19},
  date={2000},
  number={4},
  pages={311--363},
  issn={0920-3036},
}

\bib{Balmer:TWGII}{article}{
  author={Balmer, Paul},
  title={Triangular Witt groups. II. From usual to derived},
  journal={Math. Z.},
  volume={236},
  date={2001},
  number={2},
  pages={351--382},
  issn={0025-5874},
}

\bib{Balmer:Ojanguren60}{article}{
  author={Balmer, Paul},
  title={Witt cohomology, Mayer-Vietoris, homotopy invariance and the Gersten conjecture},
  journal={$K$-Theory},
  volume={23},
  date={2001},
  number={1},
  pages={15--30},
  issn={0920-3036},
}

\bib{Balmer:Handbook}{article}{
  author={Balmer, Paul},
  title={Witt groups},
  conference={ title={Handbook of K-theory. Vol. 1, 2}, },
  book={ publisher={Springer}, place={Berlin}, },
  date={2005},
  pages={539--576},
}

\bib{BalmerCalmes:GeometricBoundary}{article}{
  author={Balmer, Paul},
  author={Calm{\`e}s, Baptiste},
  title={Geometric description of the connecting homomorphism for Witt groups},
  journal={Doc. Math.},
  volume={14},
  date={2009},
  pages={525--550},
  issn={1431-0635},
}

\bib{BalmerCalmes:Gr}{article}{
  author={Balmer, Paul},
  author={Calm{\`e}s, Baptiste},
  title={Witt groups of Grassmann varieties},
  note={Preprint},
  eprint={www.math.uiuc.edu/K-theory/0903/},
  date={2008},
}

\bib{BalmerGille:Koszul}{article}{
  author={Balmer, Paul},
  author={Gille, Stefan},
  title={Koszul complexes and symmetric forms over the punctured affine space},
  journal={Proc. London Math. Soc. (3)},
  volume={91},
  date={2005},
  number={2},
  pages={273--299},
}

\bib{BGG:Schubert-cells}{article}{
  author={Bernstein, Joseph},
  author={Gelfand, Israel},
  author={Gelfand, Sergei},
  title={Schubert cells, and the cohomology of the spaces $G/P$},
  language={Russian},
  journal={Uspehi Mat. Nauk},
  volume={28},
  date={1973},
  number={3(171)},
  pages={3--26},
  translation={ journal={Russian Math. Surveys}, volume={28}, date={1973}, number={3}, pages={1--26}, },
}

\bib{Bott:K}{book}{
  author={Bott, Raoul},
  title={Lectures on $K(X)$},
  series={Mathematics Lecture Note Series},
  publisher={W. A. Benjamin, Inc., New York-Amsterdam},
  date={1969},
  pages={x+203 pp. paperbound},
}

\bib{BrunerGreenlees}{book}{
  author={Bruner, Robert},
  author={Greenlees, John},
  title={Connective real $K$-theory of finite groups},
  series={Mathematical Surveys and Monographs},
  volume={169},
  publisher={American Mathematical Society},
  place={Providence, RI},
  date={2010},
  pages={vi+318},
}

\bib{CalmesFasel:Trivial}{article}{
  author={Calm{\`e}s, Baptiste},
  author={Fasel, Jean},
  title={Trivial Witt groups of flag varieties},
  date={2011},
  note={Preprint},
  eprint={arxiv.org/abs/1103.4412},
}

\bib{Dugger:UHT}{article}{
  author={Dugger, Daniel},
  title={Universal homotopy theories},
  journal={Adv. Math.},
  volume={164},
  date={2001},
  number={1},
  pages={144--176},
  issn={0001-8708},
}

\bib{Nordfjordeid}{collection}{
  author={Dundas, Bj{\o }rn Ian},
  author={Levine, Marc},
  author={{\O }stv{\ae }r, Paul Arne},
  author={R{\"o}ndigs, Oliver},
  author={Voevodsky, Vladimir},
  title={Motivic homotopy theory},
  series={Universitext},
  note={Lectures from the Summer School held in Nordfjordeid, August 2002},
  publisher={Springer-Verlag},
  place={Berlin},
  date={2007},
  pages={x+221},
  isbn={978-3-540-45895-1},
  isbn={3-540-45895-6},
}

\bib{EKM:Q}{book}{
  author={Elman, Richard},
  author={Karpenko, Nikita},
  author={Merkurjev, Alexander},
  title={The algebraic and geometric theory of quadratic forms},
  series={American Mathematical Society Colloquium Publications},
  volume={56},
  publisher={American Mathematical Society},
  place={Providence, RI},
  date={2008},
  pages={viii+435},
  isbn={978-0-8218-4329-1},
}

\bib{Fujii:P}{article}{
  author={Fujii, Michikazu},
  title={$K\sb {0}$-groups of projective spaces},
  journal={Osaka J. Math.},
  volume={4},
  date={1967},
  pages={141--149},
  issn={0030-6126},
}

\bib{FultonHarris}{book}{
  author={Fulton, William},
  author={Harris, Joe},
  title={Representation theory},
  series={Graduate Texts in Mathematics},
  volume={129},
  publisher={Springer-Verlag},
  place={New York},
  date={1991},
}

\bib{Gille:homotopy-invariance}{article}{
  author={Gille, Stefan},
  title={Homotopy invariance of coherent Witt groups},
  journal={Math. Z.},
  volume={244},
  date={2003},
  number={2},
  pages={211--233},
}

\bib{Harris:AG}{book}{
  author={Harris, Joe},
  title={Algebraic geometry},
  series={Graduate Texts in Mathematics},
  volume={133},
  note={A first course},
  publisher={Springer-Verlag},
  place={New York},
  date={1992},
  pages={xx+328},
  isbn={0-387-97716-3},
}

\bib{Hatcher:VBKT}{book}{
  author={Hatcher, Allen},
  title={Vector bundles and K-theory},
  note={Version~2.1, available at \url {www.math.cornell.edu/\textasciitilde hatcher/VBKT/VBpage.html}},
  date={2009},
}

\bib{Hoggar}{article}{
  author={Hoggar, Stuart},
  title={On ${\rm KO}$ theory of Grassmannians},
  journal={Quart. J. Math. Oxford Ser. (2)},
  volume={20},
  date={1969},
  pages={447--463},
  issn={0033-5606},
}

\bib{Hornbostel}{article}{
  author={Hornbostel, Jens},
  title={$\mathbb A^1$-representability of Hermitian $K$-theory and Witt groups},
  journal={Topology},
  volume={44},
  date={2005},
  number={3},
  pages={661--687},
  issn={0040-9383},
}

\bib{Husemoller:FibreBundles}{book}{
  author={Husemoller, Dale},
  title={Fibre bundles},
  series={Graduate Texts in Mathematics},
  volume={20},
  edition={3},
  publisher={Springer-Verlag},
  place={New York},
  date={1994},
  pages={xx+353},
}

\bib{Ishitoya:Squaring}{article}{
  author={Ishitoya, Kiminao},
  title={Squaring operations in the Hermitian symmetric spaces},
  journal={J. Math. Kyoto Univ.},
  volume={32},
  date={1992},
  number={1},
  pages={235--244},
}

\bib{Jardine:simplicial_presheaves}{article}{
  author={Jardine, John Frederick},
  title={Simplicial presheaves},
  journal={J. Pure Appl. Algebra},
  volume={47},
  date={1987},
  number={1},
  pages={35--87},
  issn={0022-4049},
}

\bib{Jouanolou}{article}{
  author={Jouanolou, Jean-Pierre},
  title={Une suite exacte de Mayer-Vietoris en $K$-th\'eorie alg\'ebrique},
  conference={ title={Algebraic $K$-theory, I: Higher $K$-theories (Proc. Conf., Battelle Memorial Inst., Seattle, Wash., 1972)}, },
  book={ publisher={Springer}, place={Berlin}, },
  date={1973},
  pages={293--316. Lecture Notes in Math., Vol. 341},
}

\bib{KTHEORY-Karoubi}{article}{
  author={Karoubi, Max},
  title={Bott periodicity in topological, algebraic and Hermitian $K$-theory},
  conference={ title={Handbook of $K$-theory. Vol. 1, 2}, },
  book={ publisher={Springer}, place={Berlin}, },
  date={2005},
  pages={111--137},
}

\bib{KKO:Flags}{article}{
  author={Kishimoto, Daisuke},
  author={Kono, Akira},
  author={Ohsita, Akihiro},
  title={KO-theory of flag manifolds},
  journal={J. Math. Kyoto Univ.},
  volume={44},
  date={2004},
  number={1},
  pages={217--227},
  issn={0023-608X},
}

\bib{Kochman}{book}{
  author={Kochman, Stanley},
  title={Bordism, stable homotopy and Adams spectral sequences},
  series={Fields Institute Monographs},
  volume={7},
  publisher={American Mathematical Society},
  place={Providence, RI},
  date={1996},
  pages={xiv+272},
  isbn={0-8218-0600-9},
}

\bib{KonoHara:Gr}{article}{
  author={Kono, Akira},
  author={Hara, Shin-ichiro},
  title={$K{\rm O}$-theory of complex Grassmannians},
  journal={J. Math. Kyoto Univ.},
  volume={31},
  date={1991},
  number={3},
  pages={827--833},
  issn={0023-608X},
}

\bib{KonoHara:HSS}{article}{
  author={Kono, Akira},
  author={Hara, Shin-ichiro},
  title={KO-theory of Hermitian symmetric spaces},
  journal={Hokkaido Math. J.},
  volume={21},
  date={1992},
  number={1},
  pages={103--116},
  issn={0385-4035},
}

\bib{MilnorHusemoller}{book}{
  author={Milnor, John},
  author={Husemoller, Dale},
  title={Symmetric bilinear forms},
  note={Ergebnisse der Mathematik und ihrer Grenzgebiete, Band 73},
  publisher={Springer-Verlag},
  place={New York},
  date={1973},
  pages={viii+147},
}

\bib{MilnorStasheff}{book}{
  author={Milnor, John},
  author={Stasheff, James},
  title={Characteristic classes},
  note={Annals of Mathematics Studies, No. 76},
  publisher={Princeton University Press},
  place={Princeton, N. J.},
  date={1974},
  pages={vii+331},
}

\bib{Morel:Asterisque}{article}{
  author={Morel, Fabien},
  title={Th\'eorie homotopique des sch\'emas},
  journal={Ast\'erisque},
  number={256},
  date={1999},
  issn={0303-1179},
}

\bib{MorelVoevodsky}{article}{
  author={Morel, Fabien},
  author={Voevodsky, Vladimir},
  title={$\mathbb A^1$-homotopy theory of schemes},
  journal={Inst. Hautes \'Etudes Sci. Publ. Math.},
  number={90},
  date={1999},
  pages={45--143 (2001)},
  issn={0073-8301},
}

\bib{Morel:Introduction}{article}{
  author={Morel, Fabien},
  title={An introduction to $\mathbb A^1$-homotopy theory},
  conference={ title={Contemporary developments in algebraic K-theory}, },
  book={ series={ICTP Lect. Notes, XV}, publisher={Abdus Salam Int. Cent. Theoret. Phys., Trieste}, },
  date={2004},
  pages={357--441},
}

\bib{Morel}{article}{
  author={Morel, Fabien},
  title={Rational stable splitting of Grassmanians and rational motivic sphere spectrum},
  note={Draft},
  eprint={www.mathematik.uni-muenchen.de/~morel/preprint.html},
  date={2006},
}

\bib{Nenashev:Gysin}{article}{
  author={Nenashev, Alexander},
  title={Gysin maps in Balmer-Witt theory},
  journal={J. Pure Appl. Algebra},
  volume={211},
  date={2007},
  number={1},
  pages={203--221},
  issn={0022-4049},
}

\bib{Nenashev:Q}{article}{
  author={Nenashev, Alexander},
  title={On the Witt groups of projective bundles and split quadrics: geometric reasoning},
  journal={J. K-Theory},
  volume={3},
  date={2009},
  number={3},
  pages={533--546},
  issn={1865-2433},
}

\bib{OnishchikVinberg}{book}{
  author={Gorbatsevich, Vladimir},
  author={Onishchik, Arkadi{\u \i }},
  author={Vinberg, {\`E}rnest},
  title={Lie Groups and Lie Algebras III: Structure of Lie Groups and Lie Algebras},
  series={Encyclopaedia of Mathematical Sciences},
  volume={41},
  publisher={Springer-Verlag},
  place={Berlin},
  date={1994},
}

\bib{PaninWalter:BO}{article}{
  author={Panin, Ivan},
  author={Walter, Charles},
  title={On the motivic commutative ring spectrum BO},
  note={Preprint},
  eprint={www.math.uiuc.edu/K-theory/0978/},
  date={2010},
}

\bib{Riou:Thesis}{thesis}{
  author={Riou, Jo{\"e}l},
  title={Op\'erations sur la K-th\'eorie alg\'ebrique et r\'egulateurs via la th\'eorie homotopique des sch\'emas},
  type={Ph.\,D. Thesis},
  organization={l'Universit\'e Paris 7 -- Denis Diderot},
  date={2006},
  eprint={www.math.u-psud.fr/~riou/these/these.pdf},
}

\bib{Riou:CatHomStable}{article}{
  author={Riou, Jo{\"e}l},
  title={Cat\'egorie homotopique stable d'un site suspendu avec intervalle},
  journal={Bull. Soc. Math. France},
  volume={135},
  date={2007},
  number={4},
  pages={495--547},
  issn={0037-9484},
}

\bib{Riou:K}{article}{
  author={Riou, Jo{\"e}l},
  title={Op\'erations sur la K-th\'eorie alg\'ebrique et r\'egulateurs via la th\'eorie homotopique des sch\'emas},
  journal={C. R. Math. Acad. Sci. Paris},
  volume={344},
  date={2007},
  number={1},
  pages={27--32},
  issn={1631-073X},
}

\bib{Schlichting:Kh-exact}{article}{
  author={Schlichting, Marco},
  title={Hermitian K-theory of exact categories},
  journal={J. K-Theory},
  volume={5},
  date={2010},
  number={1},
  pages={105--165},
  issn={1865-2433},
}

\bib{Schlichting:MV}{article}{
  author={Schlichting, Marco},
  title={The Mayer-Vietoris principle for Grothendieck-Witt groups of schemes},
  journal={Invent. Math.},
  volume={179},
  date={2010},
  number={2},
  pages={349--433},
  issn={0020-9910},
}

\bib{Schlichting:Karoubi}{article}{
  author={Schlichting, Marco},
  title={Hermitian K-theory, derived equivalences and Karoubi's fundamental theorem},
  note={In preparation},
}

\bib{Segal}{article}{
  author={Segal, Graeme},
  title={Equivariant $K$-theory},
  journal={Inst. Hautes \'Etudes Sci. Publ. Math.},
  number={34},
  date={1968},
  pages={129--151},
  issn={0073-8301},
}

\bib{Voevodsky:ICM}{article}{
  author={Voevodsky, Vladimir},
  title={$\mathbb A^1$-homotopy theory},
  booktitle={Proceedings of the International Congress of Mathematicians, Vol. I (Berlin, 1998)},
  journal={Doc. Math.},
  date={1998},
  number={Extra Vol. I},
  pages={579--604},
  issn={1431-0635},
}

\bib{Walter:TGW}{article}{
  author={Walter, Charles},
  title={Grothendieck-Witt groups of triangulated categories},
  note={Preprint},
  eprint={www.math.uiuc.edu/K-theory/0643/},
  date={2003},
}

\bib{Walter:PB}{article}{
  author={Walter, Charles},
  title={Grothendieck-Witt groups of projective bundles},
  note={Preprint},
  eprint={www.math.uiuc.edu/K-theory/0644/},
  date={2003},
}

\bib{Yagita:Q}{article}{
  author={Yagita, Nobuaki},
  title={Coniveau spectral sequence and motivic cohomology of quadrics and classifying spaces},
  note={Preprint},
  eprint={www.math.uiuc.edu/K-theory/0709/},
  date={2004},
}

\bib{Yagita:Gr}{article}{
  author={Yagita, Nobuaki},
  title={Note on Witt group and KO-theory of complex Grassmannians},
  note={Preprint},
  eprint={www.math.uni-bielefeld.de/LAG/man/325.html},
  date={2009},
}

\bib{Me:RSK-Essay}{article}{
  author={Zibrowius, Marcus},
  title={Comparing Grothendieck-Witt groups of a complex variety to its real topological K-groups},
  note={Smith-Knight/Rayleigh-Knight Essay},
  eprint={www.dspace.cam.ac.uk/handle/1810/236624},
  date={2009},
}
\end{biblist}
\end{bibdiv}

\end{document}